\newcommand{\ba}{\begin{array}}
\newcommand{\ea}{\end{array}}
\newcommand{\bit}{\begin{itemize}}
\newcommand{\eit}{\end{itemize}}
\newcommand{\be}{\begin{equation}}
\newcommand{\bee}{\begin{equation*}}
\newcommand{\eee}{\end{equation*}}
\newcommand{\ee}{\end{equation}}
\newcommand{\bea}{\begin{eqnarray}}
\newcommand{\eea}{\end{eqnarray}}
\newcommand{\st}{\mathrm{s.t.}}
\newcommand{\argmin}{\mathop{\mathrm{arg\,min}}}
\newcommand{\Rmn}[1]{\uppercase\expandafter{\romannumeral#1}}
\newcommand{\Mcal}{\mathcal{M}}
\newcommand{\R}{\mathbb{R}}
\newcommand{\prox}{\mathrm{prox}}
\newcommand{\iprod}[2]{\left \langle #1, #2 \right \rangle }
\newcommand{\tr}{\mathrm{tr}}
\newtheorem{assum}{Assumption}
\tiny\color{gray},
\newlist{assumptions}{enumerate}{1}
\setlist[assumptions,1]{label=A\arabic*, ref=A\arabic*}
\begin{document}

\title{An efficient primal dual semismooth Newton method for semidefinite programming}

\author{\name Zhanwang Deng \email dzw\_opt2022@stu.pku.edu.cn\\
       \addr Academy for Advanced Interdisciplinary Studies\\
       Peking University\\
       Beijing, 100871, China \\
       \name Jiang Hu\thanks{Corresponding author} \email hujiangopt@gmail.com\\
       Department of Mathematics,  \\
       University of California, Berkeley,\\
       Berkeley, CA 94720, US \\
       \name Kangkang Deng \email freedeng1208@gmail.com\\
       \addr Department of Mathematics\\  National University of Defense Technology\\ Changsha, 410073,
CHINA  \\
       \name Zaiwen Wen \email wenzw@pku.edu.cn\\
       \addr Beijing International Center for Mathematical Research \\ Center for Machine Learning Research \\
       Changsha Institute for Computing and Digital Economy \\
       Peking University \\
       Beijing 100871, CHINA}
\editor{My editor}

\maketitle

\begin{abstract}
In this paper, we present an efficient semismooth Newton method, named SSNCP, for solving a class of semidefinite programming problems. Our approach is rooted in an equivalent semismooth system derived from the saddle point problem induced by the augmented Lagrangian duality.  An additional correction step is incorporated after the semismooth Newton step to ensure that the iterates eventually reside on a manifold where the semismooth system is locally smooth. Global convergence is achieved by carefully designing inexact criteria and leveraging the $\alpha$-averaged property to analyze the error. The correction steps address challenges related to the lack of smoothness in local convergence analysis. Leveraging the smoothness established by the correction steps and assuming a local error bound condition, we establish the local superlinear convergence rate without requiring the stringent assumptions of nonsingularity or strict complementarity. Furthermore, we prove that SSNCP converges to an $\varepsilon$-stationary point with an iteration complexity of $\widetilde{\mathcal{O}}(\varepsilon^{-3/2})$. Numerical experiments on various datasets, especially the Mittelmann benchmark, demonstrate the high efficiency and robustness of SSNCP compared to state-of-the-art solvers.
\end{abstract}

\begin{keywords} Semidefinite programming, primal dual algorithm, semismooth Newton method, globalization,  superlinear convergence.
\end{keywords}

\section{Introduction} \label{sec1}
The goal of this paper is to design an efficient and practical algorithm that is comparable to state-of-the-art solvers for solving the following large-scale convex composite semidefinite programming (SDP) problem:
\begin{equation}\label{prob}
     \min \left<\bm{c},\bm{x}\right>+h(\bm{x}), ~\mathrm{s.t.}~ \mathcal{A}(\bm{x}) \in \mathcal{Q},  \bm{x}\in\mathcal{K},
\end{equation}
where $\bm{c} \in \mathbb{S}^{ n \times n}$, and $\mathcal{K} = \mathbb{S}_+^n$ denotes the positive semidefinite cone. The map $\mathcal{A}: \mathbb{S}^{n \times n} \rightarrow \mathbb{R}^m$ is linear, $\mathcal{Q}$ represents a box constraint, and $h$ is an elementwise indicator function on a polyhedral convex set.
 The function $h$ provides flexibility to handle additional constraints, such as nonnegative or box constraints.
The original problem \eqref{prob} encompasses both the standard SDP and SDP with nonnegative constraints (SDP+).  Although the examples presented in \eqref{prob} focus on specific SDP problems, the proposed algorithm can handle more general forms of $h$, provided that its proximal operator is computationally tractable.

 SDP is a fundamental component of classical conic optimization, with extensive applications in diverse domains. This problem class has been the subject of intensive research for several decades and continues to attract significant attention \citep{zhao2010newton,sun2020sdpnal+,wang2023decomposition,de2025instance,tavakoli2024semidefinite}.  For a comprehensive theoretical analysis and an overview of the critical roles of SDP in fields such as finance, engineering, optimal control, statistics, and machine learning, we refer readers to \citep{anjos2011handbook,wolkowicz2012handbook}. Given its widespread applications in engineering and machine learning, the development of efficient solvers for SDP has emerged as a central focus in the field of optimization in recent decades. We provide a survey of algorithms for solving SDP problems in the following subsection.

\subsection{Literature review}

First-order methods are popular for solving
large-scale optimization problems due to the low complexity at each iteration. For SDP and SDP+ problems, the alternating direction method of multipliers (ADMM), as implemented in SDPAD \citep{wen2010alternating}, has demonstrated considerable numerical efficacy.
A two-block decomposition hybrid proximal extragradient method, termed 2EBD, is proposed in \citep{monteiro2014first}.
Subsequently, a convergent symmetric Gauss–Seidel based three-block ADMM method is developed in \citep{sun2015convergent}, which is capable of handling SDP problems with additional polyhedral set constraints. These first-order methods often achieve a low-accuracy solution rapidly but may require many iterations to attain a highly accurate solution. Consequently, further efforts are necessary to enhance their practicality.

The interior point method \citep{nesterov1994interior} is a well-established, efficient classical approach for SDP. This method employs a sequence of log-barrier penalty subproblems, requiring one step of Newton's method to solve each subproblem.  Open-source solvers such as SeDuMi \citep{sturm1999using} and SDPT3 \citep{toh1999sdpt3} have been widely adopted. They achieve considerable success in various application domains. Furthermore, commercial solvers based on interior point methods, such as MOSEK \citep{MOSEK}, are also well-developed.
Nevertheless, each step of the interior point method requires a factorization of the Schur complement matrix. When the number of constraints is large, the computational complexity and memory requirements increase significantly, leading to substantially increased runtime. If iterative methods are employed, the low-rank or high-rank structure of the solution cannot be exploited.
Furthermore, when handling SDP with affine constraints, the interior point method needs to introduce extra nonnegative variables to transform the original problem into a conic problem. This transformation will lead to an increase in the scale of the linear systems, thereby requiring more memory.

The decomposition method has also proven to be effective for low-rank SDP.
To circumvent the positive semidefinite constraint, Burer and Monteiro \citep{burer2003nonlinear} recast the linear SDP by factorizing the original variables.  They apply the augmented Lagrangian (AL) method
 and use a quasi-Newton method to solve the factorized subproblem. Recently, a decomposition augmented Lagrangian method for low-rank SDP has been proposed in \citep{wang2023decomposition}. This method utilizes a semismooth Newton method on the manifold to solve the subproblem in the AL method, leveraging the low-rank property. However, the initialization and adjustment of the rank parameter are crucial for the decomposition method.
Furthermore, for problems that have high-rank solutions, the decomposition method may not be efficient.

In addition to the methods mentioned above, semismooth Newton methods have also demonstrated effectiveness in addressing large-scale SDP problems. In the context of the AL method, a Newton-CG AL method to solve SDPs,  SDPNAL, is proposed in \citep{zhao2010newton}. This method is later extended to a MATLAB software package, SDPNAL+, for SDP with bound constraints \citep{sun2020sdpnal+}.
Furthermore, motivated by the equivalence between the Douglas-Rachford splitting (DRS) iteration and the ADMM, a semismooth Newton method based solver, SSNSDP, for SDP problems arising in electronic structure calculations has been proposed \citep{li2018semismooth}. However, its convergence analysis relies on switching between the semismooth Newton method and ADMM.
This approach is subsequently generalized to address optimal transport problems \citep{liu2022multiscale}.
Recently, a primal dual semismooth Newton method has been proposed for multi-block convex composite optimization problems in \citep{deng2023augmented}. This method is based on the nonlinear system derived from the AL saddle point problem. However, the strict complementarity (SC) condition is necessary for the convergence analysis, which may not hold in general. This motivates us to design a more practical algorithm with relaxed requirements for superlinear convergence analysis.

\subsection{Contributions}
The contributions of this paper are listed as follows:

    (1) Using the AL duality, we transform the original problem \eqref{prob} into a saddle point problem and formulate a semismooth system of nonlinear equations to characterize the optimality conditions. Different from the construction in \citep{li2018semismooth} which depends on the DRS iteration for two blocks of composite optimization problems, our reformulation is easily accessible and better suited for handling \eqref{prob}.
    Subsequently, we develop SSNCP, a regularized semismooth Newton method with nonmonotone line search and carefully designed inexact criteria. Furthermore, to address challenges related to the lack of smoothness in local analysis, an additional correction step is incorporated.
    Compared with the two loops in the augmented Lagrangian methods \citep{zhao2010newton}, our method updates both primal and dual variables in a single semismooth Newton step simultaneously.

    (2) We establish the global convergence of SSNCP by carefully designing inexact criteria and leveraging the inherent $\alpha$-averaged property to control the error.
    For the local convergence, thanks to the correction step, the manifold identification of the iteration sequence is proved, namely, iterates eventually reside on a manifold. Consequently, without requiring the stringent nonsingularity or SC assumption, we show that the local smooth transition to the superlinear convergence of the iterate sequences under the error bound condition. Furthermore, the worst iteration complexity of SSNCP is $\widetilde{\mathcal{O}}(\varepsilon^{-3/2})$. To our knowledge, this is probably the first iteration complexity analysis on semismooth Newton-type algorithms.

    (3) Promising and extensive numerical results are presented across various SDP and SDP+ datasets. SSNCP demonstrates superior performance compared to state-of-the-art solvers on all tested problems. Notably, SSNCP is able to compete with MOSEK on challenging Mittelmann benchmark problems. The results  indicate that SSNCP is robust and efficient in solving complex and large-scale SDP problems.

\subsection{Notation} For a linear operator $\mathcal{A}$, we denote its adjoint operator by $\mathcal{A}^*$.  The dual cone of $\mathcal{K}$ is defined as $\mathcal{K}^*: = \{\bm{y}:\left<\bm{y},\bm{x}\right>\geq 0, \forall \bm{x}\in \mathcal{K}\}$. For a proper convex function $g$,  we define its domain as ${\rm dom}(g):=\{ \bm{x}: g(\bm{x}) < \infty\}$. The Fenchel conjugate function of $g$ is $g^*(\bm{z}) := \sup_{\bm{x}}\{\left<\bm{x},\bm{z}\right> - g(\bm{x})  \}$ and the subdifferential  is $ \partial g(\bm{x}): = \{\bm{z}:~ g(\bm{y}) - g(\bm{x}) \geq \left<\bm{z}, \bm{y} - \bm{x}  \right>,~\forall \bm{y}  \}. $
For a convex set $\mathcal{Q}$, we use $\delta_{\mathcal{Q}}$ to denote its indicator function, which takes value $0$ on $\mathcal{Q}$ and $+\infty$ elsewhere. The relative interior of $ \mathcal{Q}$ is denoted by ${\rm ri}(\mathcal{Q})$ and the boundary of $\mathcal{Q}$ is denoted by ${\rm bd}(\mathcal{Q}).$
The projection operator onto a closed convex set $\mathcal{C}$ is defined by $ \Pi_{\mathcal{C}}(\bm{x}) = \arg\min_{\bm{y}\in \mathcal{C}}\|\bm{y} - \bm{x} \|^2.$ For any proper closed convex function $g$, and constant $t>0$, the proximal operator of $g$ is defined by $
    \prox_{tg}(\bm{x}) = \arg\min_{\bm{y}}\{g(\bm{y}) + \frac{1}{2t}\|  \bm{y} - \bm{x}\|^2  \}.
$  The notion $n_1 = \mathcal{O}(n)$ means there exists a constant $C$ such that $n_1 \le Cn$ and $\widetilde{\mathcal{O}}$ indicates that logarithmic factors of $\mathcal{O}$ are ignored.

\subsection{Organization}
The rest of this paper is organized as follows. In Section \ref{sec2}, we present a nonlinear system characterization of the optimality condition of \eqref{prob} based on the AL saddle point, under the assumption of Slater’s condition to ensure strong duality. In Section \ref{sec3}, we propose a semismooth Newton method to solve problem \eqref{prob}.  Theoretical analysis, including  global and local convergence as well as iteration complexity, is established in Section \ref{sec4}. Extensive numerical
experiments are presented in Section \ref{sec5} and we conclude this paper in Section \ref{sec6}.

\section{Preliminaries} \label{sec2}
In this section, we first transform the original problem \eqref{prob}  into a saddle point problem using the AL duality.  A monotone nonlinear system derived from the saddle point problem is then presented and analyzed. Furthermore, under mild assumptions on the proximal operator of $h$, such a nonlinear system can be semismooth and equivalent to the Karush–Kuhn–Tucker (KKT) optimality condition of problem \eqref{prob}.
\subsection{An equivalent saddle point problem}
 In this subsection, we present the construction of the saddle point problem. First, the dual problem of \eqref{prob} is given by
\begin{equation}\label{prob:dual0}
\begin{aligned}
     \min_{ \bm{y}, \bm{z}, \bm{s}}  ~~&   h^*(-\bm{z}) + \delta_{\mathcal{Q}}^*(-\bm{y}) + \delta_{\mathcal{K}}^*(-\bm{s}),   \\
     \mathrm{s.t.}~~&~ \mathcal{A}^*(\bm{y}) + \bm{s} + \bm{z} = \bm{c}, \\
\end{aligned}
\end{equation}
where $\bm{y} \in \mathbb{R}^m, \bm{s} \in \mathbb{S}^{n}, \bm{z} \in \mathbb{S}^{n}.$
Throughout this paper, we make the following assumption.
\begin{assum}\label{assum}
 The dual problem \eqref{prob:dual0} has an optimal solution $\bm{y}_*, \bm{z}_*, \bm{s}_*.$ Furthermore,   Slater’s condition holds for the dual problem \eqref{prob:dual0}, i.e., there exist $-\bm{y} \in {\rm ri}({\rm dom}(\delta_{\mathcal{Q}}^*)), - \bm{s} \in {\rm ri}({\rm dom}(\delta_{\mathcal{K}}^*)),$ and $ -\bm{z} \in {\rm ri}({\rm dom}(h^*)) $ such that $\mathcal{A}^*(\bm{y}) + \bm{s} + \bm{z} = \bm{c}.$
\end{assum}

Under Assumption \ref{assum},  the optimal solution $(\bm{y}_*,\bm{z}_*,\bm{s}_*,\bm{x}_*)$ of the primal problem \eqref{prob} and the dual problem \eqref{prob:dual0}  satisfies the following KKT optimality conditions \citep{karush1939minima,kuhn2014nonlinear}:
\begin{equation}\label{kkt}
    \begin{aligned}
     0  &=  \mathcal{A}^*(\bm{y}_*) + \bm{s}_* + \bm{z}_* - \bm{c},\,
     0  =   \mathcal{A} \bm{x}_* - \Pi_{\mathcal{Q}}(\mathcal{A}\bm{x}_*-\bm{y}_*),\\
     0 &=    \bm{x}_* - \Pi_{\mathcal{K}}(\bm{x}_* -\bm{s}_*),\,
    \bm{x}_*  =   \prox_h(\bm{x}_* -\bm{z}_* ).
    \end{aligned}
\end{equation}

To reformulate the above set-valued KKT system into a single-valued semismooth  system, a modified AL function and the associated saddle point problem are introduced in \citep{deng2023augmented}.
 We first introduce two auxiliary variables $\bm{v},\bm{p}$ to decouple the variables for the constraint $\mathcal{A}^*(\bm{y}) + \bm{s} + \bm{z} = \bm{c}$ from the possibly nonsmooth term $h^*$ and the function $\delta_{\mathcal{Q}}^*$.
 By associating  Lagrangian multipliers $(\bm{x},\bm{u},\bm{q})$ to the nonsmooth term $h^*, \delta_{\mathcal{Q}}^*$, and $\delta_{\mathcal{K}}^*$, and replacing $\bm{v},\bm{p}$, and $\bm{s}$ with their closed-form solutions, the modified AL function of \eqref{prob:dual0} is given by
\begin{equation}\label{eq:alfunc}
\begin{aligned}
\Phi(\bm{y}, \bm{z}, \bm{x},\bm{u},\bm{q}) = &  \underbrace{ h^*(\prox_{ h^*/\sigma}(\bm{q}/\sigma - \bm{z} )) + \frac{1}{2\sigma } \| \prox_{ \sigma h}(\bm{q} - \sigma\bm{z}) \|^2}_{ \text{Moreau envelope of } h^*}\\
     + & \underbrace{\frac{1}{2\sigma }\|\Pi_{\mathcal{K}}(\bm{x} + \sigma(\mathcal{A}^*(\bm{y})  + \bm{z} -\bm{c}) )     \|^2}_{ \text{Moreau envelope of } \delta_{\mathcal{K}}^*} \\
     + &\underbrace{\frac{1}{2\sigma} \|\Pi_{\mathcal{Q}}(\bm{u} - \sigma \bm{y}   ) \|^2}_{\text{Moreau envelope of }
 \delta^*_{\mathcal{Q}}}- \frac{1}{2\sigma}( \|\bm{x}\|^2 + \|\bm{u}\|^2 + \|\bm{q}\|^2),
\end{aligned}
\end{equation}
where $\sigma > 0$ is the penalty parameter.  The Moreau envelope for $h^*$ is defined by:
$
e_{\sigma}h^*(\bm{x}): = \min_{\bm{y}} h^*(\bm{y}) + \frac{\sigma}{2} \| \bm{y} - \bm{x} \|^2.
$
The Moreau envelopes $e_{\sigma}\delta_{\mathcal{Q}}^*$ and $e_{\sigma}\delta_{\mathcal{K}}^*$  can be defined similarly.
Then the corresponding saddle point problem associated with the above AL function is
\begin{equation} \label{prob:saddle}
     \min_{\bm{y},\bm{z}}\max_{\bm{x},\bm{u},\bm{q}} \Phi(\bm{y}, \bm{z}, \bm{x},\bm{u},\bm{q}).
\end{equation}

 Denote $\bm{w} := (\bm{y}, \bm{z}, \bm{x},\bm{u},\bm{q})$. Similar to \cite[Lemma 2.1]{deng2023augmented}, we next give the strong AL duality of $\Phi(\bm{w})$ without proof.
\begin{lemma}
Suppose that Assumption \ref{assum} holds. Given $\sigma > 0$, the strong duality holds for \eqref{prob:saddle}, i.e.,
\begin{equation}\label{lemma:strong}
    \min_{\bm{y},\bm{z}}\max_{\bm{x},\bm{u},\bm{q}} \Phi(\bm{y}, \bm{z}, \bm{x},\bm{u},\bm{q}) = \max_{\bm{x},\bm{u},\bm{q}} \min_{\bm{y},\bm{z}}\Phi(\bm{y}, \bm{z}, \bm{x},\bm{u},\bm{q}),
\end{equation}
where both sides of \eqref{lemma:strong} are equivalent to problem \eqref{prob:dual0}.
\end{lemma}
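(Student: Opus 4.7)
The plan is to realize $\Phi$ as the partial minimization of a classical augmented Lagrangian for a reformulation of \eqref{prob:dual0}, and then combine standard AL duality with Slater's condition to deduce strong duality for the resulting convex-concave saddle function.

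First, I would reformulate \eqref{prob:dual0} by introducing auxiliary variables so that the nonsmooth terms $h^*,\delta_{\mathcal{Q}}^*,\delta_{\mathcal{K}}^*$ are decoupled from the linear constraint. Concretely, replace $(-\bm{z},-\bm{y},-\bm{s})$ in the objective by new variables $(\bm{v},\bm{p},\bm{s}')$ and add the three equality constraints $\bm{v}+\bm{z}=0,\ \bm{p}+\bm{y}=0,\ \mathcal{A}^*(\bm{y})+\bm{s}+\bm{z}=\bm{c}$ (the last one already being the dual constraint). Attaching multipliers $\bm{q},\bm{u},\bm{x}$ and a quadratic penalty $\sigma/2$ to these three constraints yields a classical AL $\tilde{L}(\bm{y},\bm{z},\bm{v},\bm{p},\bm{s}',\bm{x},\bm{u},\bm{q})$. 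Performing the (unconstrained) minimization over $\bm{v},\bm{p}$ and the $\delta_{\mathcal{K}}^*$-relevant slack in closed form produces exactly the three Moreau envelopes appearing in \eqref{eq:alfunc}; using Moreau's identity to rewrite $e_\sigma h^*$ and the corresponding envelopes in terms of $\prox$ operators, together with completing the square to absorb the linear cross terms into $-\tfrac{1}{2\sigma}(\|\bm{x}\|^2+\|\bm{u}\|^2+\|\bm{q}\|^2)$, gives the identity $\Phi(\bm{w})=\min_{\bm{v},\bm{p},\bm{s}'}\tilde{L}$.

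Second, using this identity, swap the outer $\min_{\bm{y},\bm{z}}$ with the partial minimization over auxiliaries to get
\begin{equation*}
\min_{\bm{y},\bm{z}}\max_{\bm{x},\bm{u},\bm{q}}\Phi
=\min_{\bm{y},\bm{z},\bm{v},\bm{p},\bm{s}'}\max_{\bm{x},\bm{u},\bm{q}}\tilde{L}.
\end{equation*}
For any feasible point of the reformulation the inner max in $\tilde{L}$ equals the original objective, while for any infeasible point it is $+\infty$ (the quadratic penalty is bounded while the linear term is unbounded in the multiplier). Hence the right-hand side equals the value of \eqref{prob:dual0}. Note that $\Phi$ is jointly convex in $(\bm{y},\bm{z})$ and jointly concave in $(\bm{x},\bm{u},\bm{q})$, because each Moreau envelope is convex and smooth on the whole space and the $-\tfrac{1}{2\sigma}\|\cdot\|^2$ terms contribute concavity in the multipliers; no domain restrictions are introduced by the partial minimization.

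Third, for the reverse inequality $\max\min\Phi\geq\min\max\Phi$, I would invoke standard saddle-point theory. Under Assumption \ref{assum}, Slater's condition for \eqref{prob:dual0} guarantees, by classical convex duality, existence of KKT multipliers $(\bm{x}_*,\bm{u}_*,\bm{q}_*)$ paired with the dual optimum $(\bm{y}_*,\bm{z}_*)$; a direct verification using the KKT system \eqref{kkt} together with the $\prox$/projection characterizations inside $\Phi$ shows that $(\bm{y}_*,\bm{z}_*,\bm{x}_*,\bm{u}_*,\bm{q}_*)$ is a saddle point of $\Phi$, which immediately yields $\max\min\Phi=\min\max\Phi$ and hence \eqref{lemma:strong}.

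The main obstacle is the third step: since $\Phi$ is defined on an unbounded domain, a blind appeal to Sion's theorem is not sufficient, and I would instead need to construct a saddle point explicitly from Assumption \ref{assum}. The crucial computation is to translate the set-valued inclusions in \eqref{kkt} into the single-valued stationarity conditions $\nabla_{\bm{y}}\Phi=\nabla_{\bm{z}}\Phi=0$ and $\nabla_{\bm{x}}\Phi=\nabla_{\bm{u}}\Phi=\nabla_{\bm{q}}\Phi=0$ at the KKT point, relying on the Moreau identity to match the proximal/projection terms in $\Phi$ with the KKT residuals; this is the step that genuinely needs the specific form of $\Phi$ and does not follow from abstract minimax arguments alone.
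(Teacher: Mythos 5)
The paper does not actually prove this lemma: it states the result ``without proof,'' deferring to Lemma~2.1 of \citet{deng2023augmented}, so there is no in-paper argument to compare against line by line. Your reconstruction follows exactly the route that the paper's own construction of $\Phi$ in Section~2.1 suggests (auxiliary variables $\bm{v},\bm{p}$, multipliers $(\bm{x},\bm{u},\bm{q})$, closed-form elimination yielding the three Moreau envelopes), and the overall logic is sound: weak duality gives $\max\min\Phi\le\min\max\Phi$, and exhibiting a saddle point of $\Phi$ built from a KKT pair (which exists by Assumption~\ref{assum}) closes the gap and pins both values to the optimal value of \eqref{prob:dual0}. You are also right that the genuinely substantive step is translating the set-valued KKT system \eqref{kkt} into the single-valued stationarity conditions $\nabla\Phi=0$ via Moreau's identity; this is precisely the content of the cited reference.

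One step in your second paragraph deserves a flag. The identity $\min_{\bm{y},\bm{z}}\max_{\bm{x},\bm{u},\bm{q}}\Phi=\min_{\bm{y},\bm{z},\bm{v},\bm{p},\bm{s}'}\max_{\bm{x},\bm{u},\bm{q}}\tilde{L}$ requires pulling the partial minimization over the auxiliaries \emph{outside} the maximization over the multipliers, i.e.\ it is itself a minimax interchange for the inner augmented-Lagrangian subproblem, not a free rearrangement. As written, the inequality you get for free is only $\min\max\Phi\le\min_{\text{all}}\max\tilde{L}=\mathrm{val}\eqref{prob:dual0}$. This does not sink the proof, because your third step (the explicit saddle point of $\Phi$ at a KKT pair, whose $\Phi$-value one checks equals the dual optimal value) independently delivers both the reverse inequality and the identification with \eqref{prob:dual0}; but you should either present step two as a one-sided bound or justify the inner interchange by the exactness of the augmented Lagrangian for convex problems. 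With that adjustment the argument is complete and consistent with the approach the paper relies on.
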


\subsection{A monotone nonlinear system} \label{subsec:strong}
 In this subsection, we present and analyze the nonlinear system induced from the saddle point problem \eqref{prob:saddle}.
It follows from the Moreau envelope theorem \citep{beck2017first} that $e_{\sigma}h^*$, $e_{\sigma}\delta_{\mathcal{Q}}^*$, and $e_{\sigma}\delta_{\mathcal{K}}^*$ are continuously differentiable, which implies that $\Phi$ is also continuously differentiable.  The gradient of function $\Phi$ in \eqref{eq:alfunc} is given by:
\begin{equation} \label{eq:grad-F}
    \begin{aligned}
    \nabla_{\bm{y}} \Phi(\bm{w}) & =   \mathcal{A} \Pi_{\mathcal{K}}( \sigma (  \mathcal{A}^*(\bm{y})  + \bm{z} -\bm{c} ) + \bm{x}) -\Pi_{\mathcal{Q}}(\bm{u} -\sigma \bm{y}  ),    \\
    \nabla_{\bm{z}} \Phi(\bm{w}) & =    \Pi_{\mathcal{K}}(  \sigma( \mathcal{A}^*(\bm{y})  + \bm{z} -\bm{c})   + \bm{x})- \prox_{\sigma h}(\bm{q} -\sigma \bm{z}  ),   \\
    \nabla_{\bm{x}} \Phi(\bm{w}) & =    -\frac{1}{\sigma}  \bm{x} + \frac{1}{\sigma} \Pi_{\mathcal{K}}( \sigma (\mathcal{A}^*(\bm{y})  + \bm{z} -\bm{c})  + \bm{x} ),\\
   \nabla_{\bm{u}} \Phi(\bm{w}) & =  -\frac{1}{\sigma}  \bm{u} +  \frac{1}{\sigma} \Pi_{\mathcal{Q}}( \bm{u} -\sigma \bm{y}), \\
  \nabla_{\bm{q}} \Phi(\bm{w}) & =  -\frac{1}{\sigma}  \bm{q} + \frac{1}{\sigma} \prox_{\sigma h}(\bm{q} -\sigma \bm{z}).
    \end{aligned}
\end{equation}

  We focus on the following nonlinear operator
\begin{equation} \label{eq:F}
     F(\bm{w}) :=
     \begin{pmatrix}
          \nabla_{\bm{y}} \Phi(\bm{w})^\top,
          \nabla_{\bm{z}} \Phi(\bm{w})^\top,
          -\nabla_{\bm{x}} \Phi(\bm{w})^\top,
         - \nabla_{\bm{u}} \Phi(\bm{w})^\top,
          -\nabla_{\bm{q}} \Phi(\bm{w})^\top
     \end{pmatrix}^\top.
\end{equation}
It follows from \cite[Lemma 2.5]{deng2023augmented} that $\bm{w}_*$ is a solution of the saddle point problem \eqref{prob:saddle} if and only if it satisfies $F(\bm{w}_*) = 0.$ Consequently, we can solve $F(\bm{w}) = 0$ to find the solution of the original problem \eqref{prob}. Before analyzing the properties of $F$, we give the definition of the Clarke subdifferential and semismoothness, which will play an important role in the subsequent analysis.

 \begin{definition} \label{def:Jacboian}
    Let $F$ be a locally Lipschitz continuous mapping. Denote by $D_F$ the set of differentiable points of $F$. The B-Jacobian of $F$ at $\bm{w}$ is defined by
\[
\partial_B F(\bm{w}) := \left\{\lim_{k \rightarrow \infty} J(\bm{w}^k)\, |\,  \bm{w}^k \in D_F, \bm{w}^k \rightarrow \bm{w}\right\},
\]
where $J(\bm{w})$ denotes the Jacobian of $F$ at $\bm{w} \in D_F$. The set $\partial F(\bm{w})$ = $co(\partial_B F(\bm{w}))$ is called the Clarke subdifferential, where $co$ denotes the convex hull.

The mapping $F$ is called semismooth  at $\bm{w}$ if $F$ is directionally differentiable at $\bm{w}$ and  for any $\bm{d}$, $J \in \partial F(\bm{w}+\bm{d})$, it holds that
$ \| F(\bm{w}+\bm{d}) -  F(\bm{w}) - J\bm{d} \| = o(\|\bm{d}\|), \;\; \bm{d} \rightarrow 0. $
Moreover, $F$ is said to be strongly semismooth at $\bm{w}$ if $F$ is directionally differentiable at $\bm{w}$ and
$\| F(\bm{w}+\bm{d}) -  F(\bm{w}) - J\bm{d} \| = O(\|\bm{d}\|^2), \;\; \bm{d} \rightarrow 0.$
We say $F$ is semismooth (respectively, strongly semismooth) if $F$ is semismooth (respectively, strongly semismooth) for any $\bm{w}$ \citep{mifflin1977semismooth}.
\end{definition}

By \citep[Lemma 2]{deng2023augmented}, we know that $F$ is always monotone, and it is semismooth if $\prox_h$ is semismooth. Unlike the semismooth system constructed in the AL method \citep{zhao2010newton} that relies on the knowledge of the multipliers, our approach implies the optimality conditions directly.  Furthermore, although there are other potential ways to construct semismooth systems induced from the KKT
conditions \citep{liang2023squared}, the resulting systems are generally nonmonotone and even nonsymmetric. Such nonmonotonicity
can raise some practical and theoretical issues including the lack of globalization.
In contrast, our approach to constructing the semismooth system is more unified and does not depend on the integration of first-order methods \citep{li2018semismooth,xiao2018regularized}.

\section{An efficient semismooth Newton method} \label{sec3}
In this section, we present a semismooth Newton algorithm to solve $F(\bm{w}) =0$.

 \subsection{Generalized Jacobian of $F$} We first present the generalized Jacobian operator of $F$.  Denote
 $$
 \begin{aligned}
     \partial \Pi_{\mathcal{Q}}& := \partial  \Pi_{\mathcal{Q}}(\bm{u} -\sigma \bm{y} ), \;\;
     \partial\prox_h   := \partial  \prox_{\sigma h}(\bm{q} -\sigma\bm{z} ),\\
      \partial\Pi_{\mathcal{K}}& := \partial \Pi_{\mathcal{K}} ( \sigma(\mathcal{A}^*(\bm{y})  + \bm{z} -\bm{c} )  + \bm{x})
 \end{aligned}
 $$
as the Clarke subdifferentials of $\Pi_{\mathcal{Q}},  \prox_{\sigma h}$ and $\Pi_{\mathcal{K}}$ at $  \bm{u} - \sigma\bm{y}, \bm{q}  - \sigma\bm{z}, \bm{x} + \sigma( \mathcal{A}^*(\bm{y})  + \bm{z} -\bm{c} )$, respectively.
We also define the following matrix operators:
 \begin{equation} \label{def:H123}
     \mathcal{H}_1 = \left(
     \begin{array}{cc}
\sigma (D_{\mathcal{Q}}+\mathcal{A}D_{\mathcal{K}}\mathcal{A}^*) & \sigma  \mathcal{A}D_{\mathcal{K}}  \\
          \sigma  D_{\mathcal{K}}\mathcal{A}^* & \sigma (D_h +D_{\mathcal{K}})
     \end{array}
     \right),  \mathcal{H}_2 = \left(
     \begin{array}{ccc}
          -\mathcal{A}D_{\mathcal{K}} & D_{\mathcal{Q}} & 0   \\
            -D_{\mathcal{K}}  & 0 & D_h
     \end{array}
     \right),
 \end{equation}
 and
 \[
  \mathcal{H}_3 = \mathrm{blkdiag}\left\{\frac{1}{\sigma} (I - D_{\mathcal{K}}), \frac{1}{\sigma} (I - D_{\mathcal{Q}} ),   \frac{1}{\sigma} (I - D_h )\right\},
 \]
 where $\mathrm{blkdiag}$ denotes the block diagonal matrix operator, $D_{\mathcal{Q}} \in \partial \Pi_{\mathcal{Q}},D_h \in \partial\prox_h  ,$ and $ D_{\mathcal{K}} \in \partial\Pi_{\mathcal{K}}$.
 For any $\bm{w}$,  define
 \begin{equation}\label{equ:jaco}
      \hat{\partial} F(\bm{w}) : = \left\{ \left(
    \begin{array}{cc}
    \mathcal{H}_1     & \mathcal{H}_2  \\
    -\mathcal{H}_2^{\top}     & \mathcal{H}_3   \\
    \end{array}
    \right): {\rm where~} \mathcal{H}_1, \mathcal{H}_2,  {\rm ~and~} \mathcal{H}_3    {\rm ~are~defined~in~} \eqref{def:H123} \right\}.
\end{equation}
 It follows from \citep{hiriart1984generalized} and the definition of $ \hat{\partial} F$ that
$ \hat{\partial} F(\bm{w})[\bm{d}] =  \partial F(\bm{w})[\bm{d}]$ for any $\bm{d}$.
Hence, $\hat{\partial} F(\bm{w})$ can be used to construct the Newton equation for solving $F(\bm{w}) = 0$.

 \subsection{A correction step}

To address the challenge related to the lack of smoothness, we propose a correction step.  Specifically, we define
\begin{equation} \label{eqn:correct}
\begin{aligned}
\hat{\bm{q}} &:= \argmin_{ \hat{\bm{q}}_{i,j} \in \mathcal{C}_{i,j} } \| \bm{q}-\hat{\bm{q}}\| ,\quad  \hat{\bm{u}} := \argmin_{\hat{\bm{u}}_i \in \text{bd}(\mathcal{Q}_i)} \|\bm{u}-\hat{\bm{u}}\|, \\
\hat{\bm{z}} &:= \argmin_{ \hat{\bm{z}}_{i,j} \in - \text{bd}(\partial h_{i,j}(\hat{\bm{q}}_{i,j})) } \| \hat{\bm{z}} - \bm{z} \|,\quad  \hat{\bm{y}} := \argmin_{ \hat{\bm{y}}_i \in -\text{bd}(\mathcal{N}_{\mathcal{Q}_i}(\hat{\bm{u}}_i) ) }\|\hat{\bm{y}} -  \bm{y} \|,
\end{aligned}
\end{equation}
where $h_{i,j}$ is $(i,j)$-th component of $h$,  $\mathcal{Q}_i$ denotes the constraint obtained by restricting the decomposable set $ \mathcal{Q}$ to the $i$-th coordinate, $\mathcal{N}_{\mathcal{Q}_i}$ denotes the normal cone of $\mathcal{Q}_i$, and $\mathcal{C}_{i,j}$ denotes the set of nondifferentiable points of $h_{i,j}$.
It can be seen that $\hat{\bm{q}}_{i,j}$ and $\hat{\bm{u}}_{i,j}$ correspond to the orthogonal projections of $\bm{q}_{i,j}$ and $\bm{u}_{i,j}$ onto the nondifferentiable points of $h_{i,j}$ and $\text{bd}(\mathcal{Q}_{i})$, respectively. Analogously, $\hat{\bm{z}}_{i,j}$ and $\hat{\bm{y}}_i$ are the orthogonal projections of $\bm{z}_{i,j}$ and $\bm{y}_i$ onto the boundaries of 1-dimensional sets $-\partial h_{i,j}(\hat{\bm{q}}_{i,j})$  and $ - \mathcal{N}_{\mathcal{Q}_{i}}(\hat{\bm{u}}_i) )$, respectively. The values of $(\hat{\bm{q}} , \hat{\bm{u}},\hat{\bm{z}},\hat{\bm{y}})$ in \eqref{eqn:correct} can be easily obtained for commonly used $h$ and $\mathcal{Q}$. For example, if $h(\bm{x}) = \delta_{\bm{x} \ge 0}(\bm{x}), $ it follows that $\mathcal{C}_{i,j} = \{0\},\text{bd}(\partial h_{i,j}(\hat{\bm{q}}_{i,j} )) = \{0\}$. Therefore,   $\hat{\bm{q}} = \hat{\bm{z}} = \bm{0} \in \mathbb{R}^{n \times n}$ by \eqref{eqn:correct}.

For given $\sigma$ and $\bm{w} = (\bm{y},\bm{z},\bm{x},\bm{u},\bm{q} ),$ we denote $ Q \Lambda Q^*$  as the eigenvalue decomposition of $\bm{x} + \sigma(\mathcal{A}^*(\bm{y}) + \bm{z} - \bm{c})$, $\lambda_i = \Lambda_{i,i}$ as the $i$-th eigenvalue, and $Q_i$ as the associated eigenvector of $\lambda_i$.
Let $\bm{\upsilon } :=   \bm{q} - \hat{\bm{q}} - \sigma(\bm{z} + \hat{\bm{z}}) $ and
 $  \bm{\vartheta} :=   \bm{u} - \hat{\bm{u}} - \sigma(\bm{y} + \hat{\bm{y}})$.
Combining the definitions of $(\hat{\bm{q}} , \hat{\bm{u}},\hat{\bm{z}},\hat{\bm{y}})$ in \eqref{eqn:correct},
the correction step with nonnegative constants $\theta, l$, and $\rho$ is constructed as:
\begin{equation} \label{def:project}
\begin{aligned}
 \tilde{\bm{w}}
=\mathcal{P}_{\theta,l,\rho,\sigma}(\bm{y},\bm{z},\bm{x},\bm{u},\bm{q})
  := (\bm{y},  \bm{z},\bm{x}- \sum_{i \in I_{\mathcal{K} }} \lambda_i Q_i Q_i^*, \bm{u} - \sum_{i \in I_{Q} }  \bm{\vartheta}_i , \bm{q} - \sum_{(i,j) \in I_h}\bm{\upsilon}_{ij} ) ,
\end{aligned}
\end{equation}
where
\[
\begin{aligned}
I_{\mathcal{K}} &:= \left\{i| |\lambda_i| < \frac{\theta}{2} \right\} , I_h = \left\{ (i,j)|  | \bm{z}_{i,j}- \hat{\bm{z}}_{i,j}| < \frac{l}{2\sigma},\,  |\bm{q}_{i,j}-\hat{\bm{q}}_{i,j}| < \frac{l}{2 } \right\}, \\
I_{\mathcal{Q} } &:= \left\{i| | \bm{y}_i-  \hat{\bm{y}}_i| \le  \frac{\rho}{2\sigma}, |\bm{u}_i- \hat{\bm{u}}_i| < \frac{\rho}{2}  \right\}.
\end{aligned}
\]
To provide a more concise explanation, $\mathcal{P}_{\theta,l,\rho,\sigma}$ adds a correction term on $\bm{x}$ to threshold the eigenvalues of
$\bm{x} + \sigma(\mathcal{A}^*(\bm{y}) + \bm{z} -\bm{c})$ with value $\theta/2$ on their absolute values. A similar approach has also been adopted in solving nonlinear SDP problems \citep{feng2024quadratically}. The operator also performs truncations on $\bm{u}$ and $\bm{q}$ to ensure that the last two components, $- \nabla_{\bm{u}} \Phi(\bm{w})$ and $-\nabla_{\bm{q}} \Phi(\bm{w})$, have desired smoothness within a specified regime. Note that if $(\theta,l,\rho) $ is zero, then $\mathcal{P}_{\theta,l,\rho,\sigma}$ reduces to the identity operator.  Furthermore, since the Clarke subdifferentials of $\Pi_{\mathcal{K}},\text{prox}_{\sigma h}$, and $\Pi_{\mathcal{Q}}$ are all computed at each step, no additional computations arise when carrying out  $\mathcal{P}_{\theta,l,\rho,\sigma}$ operation.

Essentially speaking, the main idea of $\mathcal{P}_{\theta,l,\rho,\sigma}$ is to project $\bm{w}$ onto a manifold where $F$ is locally smooth when $\bm{w}$ is close to the solution set, see Lemma \ref{lem:local-smooth} for details. In early iteration, it can be the identity operator. This local smoothness allows us to show the local transition to the
superlinear convergence. Note that the local superlinear convergence of the update \eqref{eq:ssn-step} in \citep{hu2025analysis} relies on the SC at the solution point, which does not necessarily hold.
Designing a superlinear convergent semismooth Newton method is challenging without assuming the SC and BD regularity of $F$.

 \subsection{A globalized semismooth Newton method}

To globalize the semismooth Newton method, we adopt a nonmonotone line search strategy on the residual sequence $\{\|F(\bm{w}^k)\|\}$. Given $\nu \in (0,1),$ $\beta \in (1/3,1]$, and a fixed integer $i_{\max} \ge 0$, we search over $i = 0, \dots, i_{\max}$ to find a suitable regularization parameter
$
\tau_{k,i} = \kappa \gamma^i \|F(\bm{w}^k)\|,
$
where $\kappa > 0$ and $\gamma > 1$ are fixed constants.
For each $i$, we define the direction $\bm{d}^{k,i}$ as the solution to the linear system
\be \label{eq:ssn}
(J^k + \tau_{k,i} I) \bm{d}^{k,i} = -F(\bm{w}^k) + \bm{\eta}^k,
\ee
where $J^k \in \hat{\partial}F(\bm{w}^k)$ is defined by \eqref{equ:jaco}, and $\bm{\eta}^k$ is a residual vector representing the inexactness. We require that there exists a constant $C_\eta > 0$ such that
$
\|\bm{\eta}^k\| \le C_\eta k^{-\beta}.
$
The corresponding trial point is then defined as
\be \label{eq:ssn-step}
\bar{\bm{w}}^{k,i} = \bm{w}^k + \bm{d}^{k,i}.
\ee

Next, for $i=0,\dots, i_{\max}$, we evaluate whether the candidate update satisfies the nonmonotone line search condition:
\begin{equation} \label{eq:decrease-1}
\|F(\tilde{\bm{w}}^{k,i})\| \le \nu \max_{\max(1,k-\zeta+1)\le j \le k} \|F(\bm{w}^j)\| + \varsigma_k,
\end{equation}
where $\tilde{\bm{w}}^{k,i} = \mathcal{P}_{\theta, l, \rho, \sigma}(\bar{\bm{w}}^{k,i})$ and $\{\varsigma_k\}_{k=1}^\infty$ is a nonnegative sequence satisfying $\sum_{k=1}^\infty \varsigma_k^2 < \infty.$
If \eqref{eq:decrease-1} is satisfied for some $i$, we stop searching $i$ and set the next iterate as $\bm{w}^{k+1} = \tilde{\bm{w}}^{k,i}$. If \eqref{eq:decrease-1} fails for all $i \le i_{\max}$, we set
\begin{equation} \label{eq:decrease-2}
 \tau_{k,i} = \kappa_1 k^\beta,
\end{equation}
where $\kappa_1 \geq 1$ is a given constant, and set $\bm{w}^{k+1} = \bar{\bm{w}}^{k,i}$ defined in \eqref{eq:ssn-step}.
Conditions \eqref{eq:decrease-1} or \eqref{eq:decrease-2} require that either the norm of $F$ evaluated at $\bar{\bm{w}}^{k,i}$ has sufficient decrease property or $\tau_{k,i} $ is chosen as in \eqref{eq:decrease-2}.

The line search over the regularization parameter ensures that a regularized second-order step is always used.
 Since the condition \eqref{eq:decrease-1} is quite loose, the regularized parameter $\tau_{k,i}$ may not be too large in practice, ensuring that the steps closely approximate exact semismooth Newton steps in most cases.   As will be shown later, \eqref{eq:decrease-1} eventually holds with $i = 0$ provided certain conditions (e.g., local error bound) are satisfied, which eliminates the need to search over $i$. Moreover, superlinear convergence is achieved, and \eqref{eq:decrease-2} occurs only finite times in this case.  In numerical experiments, we also find that in most cases even though $ \|F(\bm{w})\| $ may increase in the current step,  it will decrease in the next few steps. Consequently,  condition \eqref{eq:decrease-1} is effective and efficient both in theory and numerics.

\begin{algorithm}[h]
\caption{A semismooth Newton method for solving \eqref{prob}} \label{alg:ssn}
\begin{algorithmic}[1]
\REQUIRE The constants $\kappa_1>0$, $\gamma > 1$, $\nu \in (0,1)$, $\beta \in (1/3, 1]$, $\kappa >0$, positive integers $\zeta, i_{\max}$, nonnegative sequence $\{\varsigma_k\}, \{ \theta^k\}, \{l^k \}, \{\rho^k\}$, an initial point $\bm{w}^0$ and set $k = 0$.
\WHILE {\emph{stopping condition not met}}
\STATE Calculate $F(\bm{w}^k)$ and select $J(\bm{w}^k) \in \partial F(\bm{w}^k)$.
\STATE
Find $\tau_{k,i}$ such that  either \eqref{eq:decrease-1} holds with $\tilde{\bm{w}}^{k,i} = \mathcal{P}_{\theta^k,l^k,\rho^k,\sigma}(\bar{\bm{w}}^{k, i})$ or \eqref{eq:decrease-2} holds.

\STATE Set $\bm{w}^{k+1} = \tilde{\bm{w}}^{k,i}$ if \eqref{eq:decrease-1} holds; otherwise, define $\bm{w}^{k+1} = \bar{\bm{w}}^{k,i}$ as in \eqref{eq:ssn-step}.

\STATE Set $k=k+1$.
\ENDWHILE
\end{algorithmic}
\end{algorithm}

\subsection{An efficient implementation of Newton step}
When Algorithm \ref{alg:ssn} is applied to solve $F(\bm{w}) = 0$, the key part is to obtain the search direction $\bm{d}^k$ from the linear system. In this subsection, we show how to solve the linear system efficiently.
  Let $ \bm{d} = (d_{\bm{y}};d_{\bm{z}};d_{\bm{x}};d_{\bm{u}};d_{\bm{q}})$, which corresponds to the directions of variables $\bm{y},\bm{z},\bm{x},\bm{u},\bm{q}$, respectively.  We consider the following Newton  equation:
\begin{equation}\label{equ:linear}
    (J^k + \tau_k I) \bm{d}^k = -  \tilde{F}^k,
\end{equation}
where $\tilde{F}^k =  F(\bm{w}^k) - \bm{\eta}^k$.
Denote $D^{\tau}_{\mathcal{K}} = ((\frac{1}{\sigma }+ \tau_{\bm{x}}) \mathcal{I}- \frac{1}{\sigma }  D_{\mathcal{K}})$, $\tilde{D}_{\mathcal{K}} = D_{\mathcal{K}}(D^{\tau}_{\mathcal{K}})^{-1}D_{\mathcal{K}}  $. $D^{\tau}_{h}, \tilde{D}_h, D^{\tau}_{\mathcal{Q}}$, and $\tilde{D}_{\mathcal{Q}}$ can be defined analogously.
It follows from the proper, closedness, and convexity of $h$, $\delta_{\mathcal{Q}}$, and $\delta_{\mathcal{K}}$, that $\mathcal{I} - D_h$, $\mathcal{I} - D_{\mathcal{Q}}$, and $\mathcal{I} - D_{\mathcal{K}}$ are positive semidefinite \citep[Lemma 3.3.5]{milzarek2016numerical}, implying that $D_h^\tau$, $D_{\mathcal{Q}}^\tau$, and $D_{\mathcal{K}}^\tau$ are nonsingular.

After Gaussian elimination and ignoring superscript $k$,
if $d_{\bm{y}}$ and $d_{\bm{z}}$ are obtained,
one can compute the explicit solution of $d_{\bm{x}},d_{\bm{u}},d_{\bm{q}}$ by:
\begin{equation}\label{eq:d3}
\begin{aligned}
    d_{\bm{x}} &= (D^{\tau_{\bm{x}}}_{\mathcal{K}})^{-1}( -D_{\mathcal{K}}\mathcal{A}^*d_{\bm{y}} - D_{\mathcal{K}}d_{\bm{z}} - \tilde{F}_{\bm{x}}), \\
    d_{\bm{u}} & = (D^{\tau_{\bm{u}}}_{\mathcal{Q}})^{-1}(D_{\mathcal{Q}}d_{\bm{y}} - \tilde{F}_{\bm{u}}), \quad \quad d_{\bm{q}}  = (D^{\tau_{\bm{q}}}_{h})^{-1}(D_h d_{\bm{z}} - \tilde{F}_{\bm{q}}).
\end{aligned}
\end{equation}
Then, the linear system \eqref{equ:linear} is reduced to
 \begin{equation}\label{eq:d12}
     \left(
    \begin{array}{cccc}
     \mathcal{M}_1 & \mathcal{M}_2  \\
       \mathcal{M}_3 & \mathcal{M}_4
    \end{array}
    \right) \left(  \begin{array}{ccc}
         d_{\bm{y}}  \\
         d_{\bm{z}}
    \end{array} \right)  = \left(  \begin{array}{ccc}
        \tilde{R}_{\bm{y}}  \\
        \tilde{R}_{\bm{z}}
    \end{array} \right),
 \end{equation}
 where
 \begin{equation*}
     \begin{aligned}
      \mathcal{M}_1 & = \overline{D}_{\mathcal{Q}}  + \tau_{\bm{y}} \mathcal{I} + \mathcal{A}\overline{D}_{\mathcal{K}} \mathcal{A}^* , ~~   \mathcal{M}_2 = \mathcal{A}\overline{D}_{\mathcal{K}},~~
      \mathcal{M}_3  = \overline{D}_{\mathcal{K}}\mathcal{A}^*,~~\mathcal{M}_4  =   \tau_{\bm{z}} \mathcal{I} + \overline{D}_{h} + \overline{D}_{\mathcal{K}}, \\
      \overline{D}_{\mathcal{K}} &= \sigma D_{\mathcal{K}} + \tilde{D}_{\mathcal{K}}, \quad
      \overline{D}_{h} = \sigma D_{h} + \tilde{D}_{h}, \quad
      \overline{D}_{\mathcal{Q}} = \sigma D_{\mathcal{Q}} + \tilde{D}_{\mathcal{Q}},
      \\ \tilde{R}_{\bm{y}}  & = -\mathcal{A}D_{\mathcal{K}}(D^{\tau}_{\mathcal{K}})^{-1} \tilde{F}_{\bm{x}} + D_{\mathcal{Q}}(D^{\tau}_{\mathcal{Q}})^{-1}\tilde{F}_{\bm{u}}(\bm{w}) -  \tilde{F}_{\bm{y}}, \\
      \tilde{R}_{\bm{z}} & = -  D_{\mathcal{K}}(D^{\tau}_{\mathcal{K}})^{-1} \tilde{F}_{\bm{x}}  + D_h  (D^{\tau}_{h})^{-1} \tilde{F}_{\bm{q}} - \tilde{F}_{\bm{z}}.
     \end{aligned}
 \end{equation*}
 Specifically, the explicit formulation of equation \eqref{eq:d12} can be represented by:
  \begin{equation} \label{eqn:dydz}
  \left[ \left(
    \begin{array}{cccc}
     \tau_{\bm{y}} \mathcal{I} + \overline{D}_{\mathcal{Q}}& 0  \\
       0 & \tau_{\bm{z}} \mathcal{I} +\overline{D}_{h}
    \end{array}
    \right)+\left(
    \begin{array}{c}
     \mathcal{A}   \\
        \mathcal{I}
    \end{array}
    \right) \overline{D}_{\mathcal{K}}  \left(\mathcal{A}^* ,\mathcal{I} \right) \right]
      \left(  \begin{array}{ccc}
         d_{\bm{y}}  \\
         d_{\bm{z}}
    \end{array} \right)  = \left(  \begin{array}{ccc}
        \tilde{R}_{\bm{y}}  \\
        \tilde{R}_{\bm{z}}
    \end{array} \right).
 \end{equation}
It follows from the definition of $D_{\mathcal{Q}}$ that
\[
(D_{\mathcal{Q}})_i = \begin{cases}
    1, & \mbox{if}\,\, (\bm{u} - \sigma \bm{y})_i \in \mathcal{Q}_i,\\
    0, & \mbox{else}.
\end{cases}
\]
Since $h$ is an elementwise function, $D_h$ is also elementwise. Hence, the main computation is to calculate $\overline{D}_{\mathcal{K}} (\mathcal{A}^*d_{\bm{y}} + d_{\bm{z}} )$ once for every matrix-vector operation.

Let $\bm{y},\bm{z}$ and $\bm{x}$ be fixed. Consider the following eigenvalue decomposition:
\begin{equation}\label{eq:sdp+:decompo}
 \bm{x} + \sigma(\mathcal{A}^*(\bm{y}) + \bm{z}   -\bm{c})     = Q \Gamma_{\bm{y}} Q^{\mathrm{T}},
\end{equation}
where $\Gamma_{\bm{y}}$ is the diagonal matrix of eigenvalues and  $Q \in \R^{n \times n}$ is the eigenvector matrix. The diagonal elements of $\Gamma_{\bm{y}}$ are arranged in the nonincreasing order: $\lambda_1 \geq \lambda_2 \geq \cdots \geq \lambda_n$. Define the following index sets $ \alpha:=\left\{i \mid \lambda_i>0\right\}, \; \bar{\alpha}:=\left\{i \mid \lambda_i \leq 0\right\}. $
Then we have that the operator $D_{\mathcal{K}}: \mathbb{S}^n \rightarrow \mathbb{S}^n$ satisfies
\[
D_{\mathcal{K}}(H):=Q\left(\Sigma \circ\left(Q^{\mathrm{T}} H Q\right)\right) Q^{\mathrm{T}}, \quad H \in \mathbb{S}^n,
\]
where $\circ$ denotes the Hadamard product of two matrices and
\begin{equation}\label{eq:sdp+:sigma}
    \Sigma=\left[\begin{array}{cc}
E_{\alpha \alpha} & v_{\alpha \bar{\alpha}} \\
v_{\alpha \bar{\alpha}}^{\mathrm{T}} & 0
\end{array}\right], \quad v_{i j}:=\frac{\lambda_i}{\lambda_i-\lambda_j}, \quad i \in \alpha, \quad j \in \bar{\alpha},
\end{equation}
where $E_{\alpha \alpha} \in \mathbb{S}^{|\alpha|}$ is the matrix of ones. Note that the matrix form of $D_{\mathcal{K}}$  can be expressed as:
$
\textbf{mat}(D_{\mathcal{K}})=\tilde{Q} \Lambda \tilde{Q}^T,
$
where \textbf{mat} denotes the matrix form of the operator, $\tilde{Q} = Q \otimes Q, \Lambda = \operatorname{diag}(\operatorname{vec}(\Sigma)).$  Consequently, we have $\mathcal{A} \overline{D}_{\mathcal{K}} \mathcal{A}^* d_{\bm{y}}
 = \mathcal{A} Q\left(( \overline{\Sigma} ) \circ\left(Q^{\mathrm{T}} (\mathcal{A}^* d_{\bm{y}}) Q\right)\right) Q^{\mathrm{T}}$, where
\begin{equation} \label{hatsig}
  \overline{\Sigma} := \left[\begin{array}{cc}
\frac{ (1 + \sigma\tau_{\bm{x}})}{\tau_{\bm{x}}} E_{\alpha \alpha} & l_{\alpha \bar{\alpha}} \\
l_{\alpha \bar{\alpha}}^{\mathrm{T}} & 0
\end{array}\right], \quad l_{i j}:=\frac{   \sigma(\sigma\tau_{\bm{x}} +1 )v_{ij}}{1+\sigma\tau_{\bm{x}} - v_{ij}}.
\end{equation}
Note that $Y$ can be represented by:
\[
Y = [Q_\alpha \, Q_{\bar{\alpha}}]  \left[\begin{array}{cc}
\frac{1}{\tau} Q_\alpha^{\mathrm{T}} D_{\mathcal{K}} Q_\alpha & \nu_{\alpha \bar{\alpha}} \circ (Q_\alpha^{\mathrm{T}} D_{\mathcal{K}} Q_{\bar{\alpha}} ) \\
\nu_{\alpha \bar{\alpha}}^{\mathrm{T}} \circ (Q_{\bar{\alpha}}^{\mathrm{T}} D_{\mathcal{K}} Q_\alpha) & 0
\end{array}\right]
\left[\begin{array}{c} Q_\alpha^{\mathrm{T}}\\ Q_{\bar{\alpha}}^{\mathrm{T}} \end{array}\right] = H + H^{\mathrm{T}},
\]
where $H= Q_\alpha[\frac{1}{2\tau_{\bm{x}}}(UQ_\alpha)Q_\alpha^{\mathrm{T}}+(\nu_{\alpha \bar{\alpha}}) \circ (UQ_{\bar{\alpha}})Q_{\bar{\alpha}}^{\mathrm{T}}]$ with $U = Q_\alpha^{\mathrm{T}}D_{\mathcal{K}}$. If $\alpha > \frac{n}{2},$ by letting $Y = \frac{1}{\tau_{\bm{x}}}D_{\mathcal{K}} - Q((\frac{1}{\tau_{\bm{x}}} E-\Omega)\circ (Q^{\mathrm{T}}D_{\mathcal{K}}Q))Q^{\mathrm{T}}$, the high rank property can also be used. Hence, $Y$ can be computed in at most $8 \min\{|\alpha|,|\bar{\alpha}|\}n^2$ flops. The linear system \eqref{eqn:dydz} can be efficiently solved to obtain $d_{\bm{y}}$, $d_{\bm{z}}$. Note that the following identity holds:
 \be
 (D^{\tau_{\bm{x}}}_{\mathcal{K}})^{-1} = ((\frac{1}{\sigma} + \tau_{\bm{x}}) I - \frac{1}{\sigma} D_{\mathcal{K}})^{-1} = \frac{\sigma}{1 + \sigma\tau_{\bm{x}}} I + \frac{1}{1+ \sigma\tau_{\bm{x}}}T,
 \ee
 where $T = \tilde{Q} L \tilde{Q}^T$ and $L_{ii}=\frac{\sigma \lambda_{i}}{1+\sigma\tau_{\bm{x}}-\lambda_i}.$ Then, it follows that:
 \[
 (D_{\mathcal{K}}^{\tau}  )^{-1}(H)  = Q\left(\Sigma_{\tau_{\bm{x}}} \circ\left(Q^{\mathrm{T}} H Q\right)\right) Q^{\mathrm{T}} = \frac{\sigma}{1+ \sigma\tau_{\bm{x}}} H +  \frac{1}{1 + \sigma\tau_{\bm{x}}} Q\left(\Sigma_{T} \circ\left(Q^{\mathrm{T}} H Q\right)\right) Q^{\mathrm{T}},
 \]
 where $\Sigma_{T}   =\left[\begin{array}{cc}
\frac{1}{\tau_{\bm{x}}} E_{\alpha \alpha} & k_{\alpha \bar{\alpha}} \\
k_{\alpha \bar{\alpha}}^{\mathrm{T}} & 0
\end{array}\right], \; k_{i j}:=\frac{\sigma v_{ij} }{1 + \sigma \tau_{\bm{x}} -   v_{ij}  }. \nonumber$
Consequently, the low-rank or high-rank property can also be used when computing $d_{\bm{x}}$.

For the standard SDP problem, since the corresponding problem involves only variables $\bm{y}$ and $\bm{s}$, it follows that $\bm{w}$ and $\varphi(\bm{w})$ reduce to $(\bm{y},\bm{x})$ and  $\Pi_{\mathcal{K}}(   \mathcal{A}^*(\bm{y})  -\bm{c} + \frac{1}{\sigma}\bm{x})$, respectively. The corresponding Newton system is reduced to:

 \begin{equation}\label{eq:d122}
     \left(
    \begin{array}{cccc}
     \mathcal{N}_1 & \mathcal{N}_2  \\
       \mathcal{N}_3 & \mathcal{N}_4
    \end{array}
    \right) \left(  \begin{array}{ccc}
         \bm{d_y}  \\
         \bm{d_x}
    \end{array} \right)  = \left(  \begin{array}{ccc}
        -\tilde{F}_{\bm{y}}  \\
        -\tilde{F}_{\bm{x}}
    \end{array} \right),
 \end{equation}
where $\bm{d}_{\bm{y}} \in \R^m, \bm{d}_{\bm{x}} \in \R^{n \times n}.$ $\mathcal{N}_1 = \sigma\mathcal{A}D_{\mathcal{K}}\mathcal{A}^* + \tau_{\bm{y}} I, \mathcal{N}_2 =   \mathcal{A}D_{\mathcal{K}} ,\mathcal{N}_3 = -D_{\mathcal{K}} \mathcal{A}^*, \mathcal{N}_4 =   \frac{1}{\sigma} (I- D_{\mathcal{K}}) + \tau_{\bm{x}} I = D_{\mathcal{K}}^{\tau_{\bm{x}}},$
 $\tilde{F}_{\bm{y}} = - \bm{b} + \sigma\mathcal{A} \varphi(\bm{w}) - \bm{\eta}_{\bm{y}} $ and $\tilde{F}_{\bm{x}} =   -  \varphi(\bm{w}) + \frac{1}{\sigma}  \bm{x} - \bm{\eta}_{\bm{x}} .$ Consequently, we only need to solve the following equation corresponding to $\bm{d}_{\bm{y}}$:
 \begin{equation} \label{eq:d1}
 \begin{aligned}
 & \mathcal{A} Q\left(( \overline{\Sigma} ) \circ\left(Q^{\mathrm{T}} (\mathcal{A}^* \bm{d}_{\bm{y}}) Q\right)\right) Q^{\mathrm{T}}  + \tau_{\bm{y}} \bm{d}_{\bm{y}} = - \tilde{F}_{\bm{y}} + \mathcal{N}_2(\mathcal{N}_4)^{-1} \tilde{F}_{\bm{x}},
  \end{aligned}
 \end{equation}
 where $\overline{\Sigma}$ is defined in \eqref{hatsig}. For classical SDP+ problems, the variables in AL function \eqref{eq:alfunc} are reduced to $(\bm{y},\bm{z},\bm{x},\bm{q})$. In this case, the computational process is similar to \eqref{eqn:dydz}, except that $\overline{D}_{\mathcal{Q}}$ is zero.

\section{Convergence analysis} \label{sec4}
In this section, we present the global, local convergence, and iteration complexity analysis of Algorithm \ref{alg:ssn}.

\subsection{Global convergence}

The proof of global convergence relies on two distinct types of descent to establish global convergence. The first is an explicit decrease in the residual norm $\|F(\bm{w}^k)\|$, directly enforced by condition~\eqref{eq:decrease-1}. The second is an implicit decrease, arising from the control of the regularization parameter $\tau_{k,i}$ under condition~\eqref{eq:decrease-2}, which indirectly contributes to convergence even when the residual does not decrease immediately. Specifically, when the correction step $\mathcal{P}_{\theta^k, l^k, \rho^k, \sigma}(\bar{\bm{w}}^{k,i})$ is accepted, condition~\eqref{eq:decrease-1} guarantees a measurable reduction in $\|F(\bm{w}^k)\|$. On the other hand, if the correction step is not applied, the inexact first-order step governed by \eqref{eq:decrease-2} helps maintain $\tau_{k,i}$ within a desirable range, thereby guiding the iterates toward optimality.
 We first prove in the following lemma that the sequence $\{\|F(\bm{w}^k)\|\}$ generated by Algorithm \ref{alg:ssn} is bounded.
\begin{lemma} \label{lem:glo}
 Suppose that Assumption \ref{assum} holds.  Let $\{\bm{w}^{k}\}_{k \ge 0}$ be the iteration sequence  generated by Algorithm \ref{alg:ssn}.  It holds that $\| F(\bm{w}^k)\|$ is bounded from above.
  If the update \eqref{eq:decrease-2} is conducted at the $(k+1)$-th iteration, the following inequality holds:
    \be \label{eq:residual-increase2} \| F(\bm{w}^{k+1})\|^2 \leq \|F(\bm{w}^k)\|^2 + \frac{M}{k^{3\beta}},  \ee
    where $M > 0$ is a constant independent of $k$.
\end{lemma}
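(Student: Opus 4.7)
The plan is to treat the two update regimes in Algorithm~\ref{alg:ssn} separately. For steps satisfying \eqref{eq:decrease-1}, an explicit contraction $\|F(\bm{w}^{k+1})\|\le \nu c_k+\varsigma_k$ is built in, where $c_k:=\max_{\max(1,k-\zeta+1)\le j\le k}\|F(\bm{w}^j)\|$, $\nu\in(0,1)$, and $\varsigma_k\to 0$ (from $\sum\varsigma_k^2<\infty$). For steps triggering \eqref{eq:decrease-2} one has $\tau_{k,i}=\kappa_1 k^\beta$ and $\bm{w}^{k+1}=\bm{w}^k+\bm{d}^k$ with $(J^k+\tau_{k,i}I)\bm{d}^k=-F(\bm{w}^k)+\bm{\eta}^k$. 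Since $J^k\in\hat{\partial}F(\bm{w}^k)$ is monotone---its block form \eqref{equ:jaco} has positive-semidefinite diagonals $\mathcal{H}_1,\mathcal{H}_3$ while the off-diagonals $\mathcal{H}_2,-\mathcal{H}_2^\top$ contribute zero to $\langle J^k\bm{d},\bm{d}\rangle$---taking the inner product of the Newton equation with $\bm{d}^k$ yields
\[
\tau_{k,i}\|\bm{d}^k\|^2 \le \langle -F(\bm{w}^k)+\bm{\eta}^k,\bm{d}^k\rangle \le (\|F(\bm{w}^k)\|+\|\bm{\eta}^k\|)\|\bm{d}^k\|,
\]
and hence the fundamental step-size bound $\|\bm{d}^k\|\le(\|F(\bm{w}^k)\|+\|\bm{\eta}^k\|)/\tau_{k,i}$.

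To establish \eqref{eq:residual-increase2}, I would expand
\[
\|F(\bm{w}^{k+1})\|^2-\|F(\bm{w}^k)\|^2 = 2\langle F(\bm{w}^k),F(\bm{w}^{k+1})-F(\bm{w}^k)\rangle + \|F(\bm{w}^{k+1})-F(\bm{w}^k)\|^2,
\]
substitute $F(\bm{w}^k)=-(J^k+\tau_{k,i}I)\bm{d}^k+\bm{\eta}^k$ into the cross term, and exploit the monotonicity of $F$ (which gives $\langle F(\bm{w}^{k+1})-F(\bm{w}^k),\bm{d}^k\rangle\ge 0$) together with the $\alpha$-averaged structure of $F$ highlighted in the abstract, inherited from the Moreau-envelope components in \eqref{eq:F}. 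This structural property is expected to produce a negative contribution of order $-(\tau_{k,i}/L)\|F(\bm{w}^{k+1})-F(\bm{w}^k)\|^2$ in the cross term; splitting this piece and using Young's inequality weighted by $1/\tau_{k,i}$ to absorb the residual mixed contributions $L\|\bm{d}^k\|\,\|F(\bm{w}^{k+1})-F(\bm{w}^k)\|$ and $\|\bm{\eta}^k\|\,\|F(\bm{w}^{k+1})-F(\bm{w}^k)\|$ then leads to
\[
\|F(\bm{w}^{k+1})\|^2-\|F(\bm{w}^k)\|^2 \le \frac{C}{\tau_{k,i}}\bigl(\|\bm{d}^k\|^2+\|\bm{\eta}^k\|^2\bigr) = O\!\left(\frac{1}{k^{3\beta}}\right),
\]
where the final rate uses the step-size bound, $\|\bm{\eta}^k\|\le C_\eta k^{-\beta}$, and the (simultaneously established) boundedness of $\|F(\bm{w}^k)\|$.

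Uniform boundedness of $c_k$ then follows by induction. In the \eqref{eq:decrease-1} regime, either $\|F(\bm{w}^{k+1})\|\le c_k$ directly (whenever $c_k\ge\varsigma_k/(1-\nu)$) or $c_k$ is already controlled by the vanishing $\varsigma_k$. In the \eqref{eq:decrease-2} regime, \eqref{eq:residual-increase2} increments $c_k^2$ by at most $M/k^{3\beta}$, and the summability $\sum_k k^{-3\beta}<\infty$ (from $3\beta>1$) keeps the total excess finite. Running the two controls in parallel confines $c_k$ to a fixed bounded set. The main obstacle is securing the sharp exponent $3\beta$ rather than the naive $2\beta$ that crude Lipschitz-plus-step-size estimates would yield: because $\beta$ may lie as low as $1/3$, a rate of only $1/k^{2\beta}$ is not summable and the boundedness step would collapse. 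Extracting the extra factor $1/\tau_{k,i}$ hinges on exploiting the $\alpha$-averaged structure to generate the negative quadratic in $\|F(\bm{w}^{k+1})-F(\bm{w}^k)\|^2$ that absorbs the problematic cross-term contributions and leaves only the desired $O(1/k^{3\beta})$ residual.
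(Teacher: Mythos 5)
Your proposal follows essentially the same route as the paper: expand $\|F(\bm{w}^{k+1})\|^2-\|F(\bm{w}^k)\|^2$, substitute the Newton relation into the cross term, extract a negative multiple of $\|F(\bm{w}^{k+1})-F(\bm{w}^k)\|^2$ from the averaged/cocoercive structure of $F$ (the paper makes this precise by writing $\bm{w}^{k+1}=T_{\tau_{k,i}^{-1}}(\bm{w}^k)+\bm{r}^k+\tilde{\bm{\eta}}^k$ with $\|\bm{r}^k\|\le\tau_{k,i}^{-2}L\|F(\bm{w}^k)\|$ and invoking nonexpansiveness of $T=I-F/L$), and complete the square to obtain the $O(k^{-3\beta})$ increment, then close with the same two-regime induction whose multiplicative factors form a convergent product because $3\beta>1$. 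The only loose spot is that the negative quadratic you describe as ``expected'' is exactly the inequality $-2\langle F(\bm{w}^{k+1})-F(\bm{w}^k),\bm{w}^{k+1}-\bm{w}^k\rangle\le-\|F(\bm{w}^{k+1})-F(\bm{w}^k)\|^2$ the paper derives from nonexpansiveness, rather than from plain monotonicity of $J^k$ or $F$ alone.
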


\begin{proof}
The residual mapping $F$ defined in \eqref{eq:F} is monotone and $L$-Lipschitz continuous.
Define $\bar{F} = F/L$. The operator $\bar{T}:= I - \bar{F}$ is nonexpansive. For ease of analysis and without loss of generality, we use $F$ and $T$ to denote $\bar{F}$ and $\bar{T}$, respectively. Note that the $\lambda$-averaged operator of $T$ is $T_{\lambda} :=(1-\lambda ) I + \lambda T$. Let us denote $\hat{\bm{w}}^{k+1} = T_{\lambda}(\bm{w}^k) = \bm{w}^k - \lambda F(\bm{w}^k)$. The Newton update \eqref{eq:ssn-step} can be written as
\begin{equation} \label{eqn:updateX2}
\begin{aligned}
\bm{w}^k + \bm{d}^{k,i} &= \bm{w}^k - (J^k + \tau_{k,i} I)^{-1} (F(\bm{w}^k) - \bm{\eta}^k) \\
& = \bm{w}^k - \tau_{k,i}^{-1}F(\bm{w}^k) + (J^k + \tau_{k,i} I)^{-1}\bm{\eta}^k  + (\tau_{k,i}^{-1}I - (J^k + \tau_{k,i} I)^{-1}) F(\bm{w}^k) \\
&= T_{\tau_{k,i}^{-1}}(\bm{w}^k) + (J^k + \tau_{k,i} I)^{-1}\bm{\eta}^k  + (\tau_{k,i}^{-1}I - (J^k + \tau_{k,i} I)^{-1}) F(\bm{w}^k).
\end{aligned}
\end{equation}
Define $\bm{r}^k = (\tau_{k,i}^{-1}I - (J^k + \tau_{k,i} I)^{-1}) F(\bm{w}^k) $. It holds that
\[ \begin{aligned}
\|\bm{r}^k\| & = \|(J^k + \tau_{k,i} I)^{-1}\left(\tau_{k,i}^{-1} (J^k + \tau_{k,i}I) - I \right) F(\bm{w}^k) \|     \\ & \leq \| (J^k + \tau_{k,i} I)^{-1} \tau_{k,i}^{-1} J^k (F(\bm{w}^k )  \|   \leq  \tau_{k,i}^{-2} L \|F(\bm{w}^k)\|.
\end{aligned}\]
By using the nonexpansiveness of $T$, we have
\begin{equation} \label{eqn:updateX22}
\begin{aligned}
&- 2\iprod{F(\bm{w}^{k+1}) - F(\bm{w}^{k})}{\bm{w}^{k+1} - \bm{w}^{k}} \\
= & \| T(\bm{w}^{k+1}) - T(\bm{w}^{k}) \|^2 - \|\bm{w}^{k+1} - \bm{w}^{k}\|^2 - \|F(\bm{w}^{k+1}) - F(\bm{w}^{k})\|^2 \\
\leq & - \|F(\bm{w}^{k+1}) - F(\bm{w}^{k})\|^2.
\end{aligned}
\end{equation}
Define $ \tilde{\bm{\eta}}^k = (J_k + \tau_{k,i}I)^{-1} \bm{\eta}^k$. If $\tau_{k,i} = \kappa_1 k^\beta$ (i.e., \eqref{eq:decrease-1} fails) and $k \geq 4$, we have
\be \label{eq:residual-increase}
\begin{aligned}
& \| F(\bm{w}^{k+1})\|^2  = \| F(\bm{w}^{k})\|^2 + \|F(\bm{w}^{k+1}) - F(\bm{w}^{k})\|^2 + 2 \iprod{F(\bm{w}^{k+1}) - F(\bm{w}^{k})}{F(\bm{w}^{k})}\\
\overset{\eqref{eqn:updateX2}}{=} &\, \| F(\bm{w}^{k})\|^2 + \|F(\bm{w}^{k+1}) - F(\bm{w}^{k})\|^2  - 2\tau_{k,i} \iprod{F(\bm{w}^{k+1}) - F(\bm{w}^{k})}{ \bm{w}^{k+1} - \bm{w}^{k} -\bm{r}^k - \tilde{\bm{\eta}}^k }\\
\overset{\eqref{eqn:updateX22}}{ \le} &\, \| F(\bm{w}^{k})\|^2 - ( \tau_{k,i} - 1) \|F(\bm{w}^{k+1}) - F(\bm{w}^{k})\|^2 + 2\tau_{k,i} \iprod{F(\bm{w}^{k+1}) - F(\bm{w}^k) }{\bm{r}^k +  \tilde{\bm{\eta}}^k }  \\
= &\, \| F(\bm{w}^{k})\|^2 - (\tau_{k,i} - 1)\| F(\bm{w}^{k+1}) - F(\bm{w}^{k}) + \frac{\tau_{k,i}}{\tau_{k,i} - 1} (\bm{r}^k + \tilde{\bm{\eta}}^k) \|^2 + \frac{\tau_{k,i}^2}{\tau_{k,i} - 1}\| \bm{r}^k + \tilde{\bm{\eta}}^k   \|^2 \\
 \leq &\, \|F(\bm{w}^k)\|^2 +  \frac{2 L^2 }{\tau_{k,i}^{3}(1-\tau_{k,i}^{-1})} \|F(\bm{w}^k)\|^2 + \frac{2}{\tau_{k,i} -1} \| \bm{\eta}^k \|^2  \\
  \leq &\,  \left( 1 + \frac{4 \kappa_1^2 L^2}{k^{3\beta}} \right) \|F(\bm{w}^k)\|^2 + \frac{4 \kappa_1^2 C_{\eta}^2}{k^{3\beta}},
\end{aligned}
\ee
where the second inequality is due to $ \|\tilde{\bm{\eta}}^k\| \le \tau_{k,i}^{-1} \|\bm{\eta}^k\|$ and the last inequality follows from $\tau_{k,i} -1 = \kappa_1 k^{\beta} -1 \ge \frac{1}{2\kappa_1} k^{\beta}$.
If \eqref{eq:decrease-1} holds, it follows that
\be \label{eq:residual-decrease}
\| F(\bm{w}^{k+1})\| \leq \nu \max_{\max(1, k-\zeta + 1) \leq j \leq k}\| F(\bm{w}^{j})\| + \varsigma_k.
\ee
 We next prove the following assertion using mathematical induction: there exists a $c > 1$ such that for $k > 0$,
\be \label{globalcov:assertion}
\|F(\bm{w}^{k})\|^2 \le c\,\Pi_{n=1}^{k} \left(1 + \frac{ \tilde{L} }{n^{3\beta}} \right) \left(\|F(\bm{w}^0)\|^2 + \frac{1}{1-\nu^2} \sum_{n=1}^{k-1} \varsigma_n^2 +  \sum_{n=1}^{k} \frac{1}{n^{3\beta}} \right),
\ee
where $\tilde{L} = 4\kappa_1^2( L^2 + C_{\eta}^2  )$.
It is easy to verify that there exists $c > 1$ such that \eqref{globalcov:assertion} holds when $k \le 4$. Suppose for some $K > 4$, and any $k< K$ \eqref{globalcov:assertion} holds, then in the $K$-th iteration, if $\tau_{K,i} = \kappa_1 K^{\beta}$, it follows from \eqref{eq:residual-increase} and \eqref{globalcov:assertion} that
\[
\begin{aligned}
\|F(\bm{w}^{K+1})\|^2 & \leq
c\Pi_{n=1}^{K + 1} (1 + \frac{ \tilde{L} }{n^{3\beta}}) \left(\|F(\bm{w}^0)\|^2 +  \frac{1}{1 -\nu^2 }\sum_{j=1}^{K-1} \varsigma_j^2 +  \sum_{n=1}^{K+1} \frac{1}{n^{3\beta}} \right).
\end{aligned}
\]
Hence \eqref{globalcov:assertion} holds for $\bm{w}^{K+1}$ if $\tau_{K,i} = \kappa_1 K^{\beta}$.
For the remaining case where \eqref{eq:decrease-1} holds, i.e., $\bm{w}^{k+1} = \tilde{\bm{w}}^{k,i}$, it follows from \eqref{eq:residual-decrease} that
\[
\begin{aligned}
& \|F(\bm{w}^{K+1})\| \le \nu \sqrt{c\,\Pi_{n=1}^{k} \left(1 + \frac{ \tilde{L} }{n^{3\beta}} \right) \left(\|F(\bm{w}^0)\|^2 + \frac{1}{1-\nu^2} \sum_{j=1}^{K-1} \varsigma_j^2 +  \sum_{n=1}^{K} \frac{1}{n^{3\beta}} \right)} + \varsigma_K.
\end{aligned}
\]
According to the AM-GM inequality, i.e., $(a+b)^2 \leq (1 + \rho)a^2 + (1 + 1/\rho)b^2$ with $\rho = \frac{1 - \nu^2}{\nu^2}$ for all $a, b \in \R$ and $\rho > 0$, we have
\[
\|F(\bm{w}^{K+1})\|^2 \leq
c\Pi_{n=1}^{K } (1 + \frac{ \tilde{L} }{n^{3\beta}}) \left(\|F(\bm{w}^0)\|^2 +  \frac{1}{1 -\nu^2 }\sum_{j=1}^{K-1} \varsigma_j^2 +  \sum_{n=1}^{K} \frac{1}{n^{3\beta}}  \right) + \frac{1}{1-\nu^2} \varsigma_K^2,
\]
Hence, \eqref{globalcov:assertion} holds. Since $\Pi_{k=1}^\infty \left(1 + \frac{ \tilde{L}^2 }{k^{3\beta} } \right) < \exp(\sum_{k=1}^\infty \frac{ \tilde{L}^2 }{k^{3\beta}} ) < \infty$ and $\sum_{j=1}^{\infty} \varsigma_j^2$ is finite, we can conclude that $\|F(\bm{w}^{k})\|$ is bounded. Let $M_1 > 0$ be the constant such that for all $k$, $\|F(\bm{w}^{k})\|^2 \leq M_1$.  If $\tau_{k,i} = \kappa_1 k^\beta$, it holds that
\[  \| F(\bm{w}^{k+1})\|^2 \leq \|F(\bm{w}^k)\|^2 + \frac{ M}{k^{3\beta}}, \]
where $M = 4L^2M_1\kappa_1^2  + 4\kappa_1^2C_{\eta}^2$.
\end{proof}
Next, we establish the global convergence of Algorithm \ref{alg:ssn}. This is demonstrated by showing that the residual norm converges to zero.

\begin{theorem} \label{thm:global-con}
Suppose that Assumption \ref{assum} holds. Let $\{\bm{w}^k\}$ be the sequence generated by Algorithm \ref{alg:ssn}. The residual $F(\bm{w}^k)$ converges to $0$, i.e., \be \label{eq:con-w} \lim_{k \rightarrow \infty} \; F(\bm{w}^k) = 0. \ee
\end{theorem}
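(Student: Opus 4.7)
The plan is to combine Lemma \ref{lem:glo} with a Fej\'er-type monotonicity derived from the $\alpha$-averaged property of $T = I - F/L$ (which is nonexpansive by monotonicity and $L$-Lipschitzness of $F$) and the nonmonotone descent enforced by \eqref{eq:decrease-1}. Let $r_k := \|F(\bm{w}^k)\|$, $\bar{M}_k := \max_{\max(1,k-\zeta+1) \le j \le k} r_j$, and partition iterations into $\mathcal{S}_1 := \{k : \eqref{eq:decrease-1} \text{ holds}\}$ and $\mathcal{S}_2 := \{k : \eqref{eq:decrease-2} \text{ is used}\}$. Lemma \ref{lem:glo} already gives boundedness of $\{r_k\}$ and the summable increment $r_{k+1}^2 \le r_k^2 + M/k^{3\beta}$ along $\mathcal{S}_2$, with $3\beta > 1$.

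If $|\mathcal{S}_2| < \infty$, then only \eqref{eq:decrease-1} is eventually invoked and $r_{k+1} \le \nu \bar{M}_k + \varsigma_k$ with $\nu < 1$ and $\varsigma_k \to 0$. A standard rolling-window argument (picking a reference index attaining $\bar{M}_k$ and iterating $\zeta$ steps so the window fully refreshes) yields $\bar{M}_k \to 0$, hence $r_k \to 0$. Otherwise $|\mathcal{S}_2| = \infty$, in which case I reuse the decomposition \eqref{eqn:updateX2}: $\bar{\bm{w}}^{k,i} = T_{\lambda_k}(\bm{w}^k) + \bm{r}^k + \tilde{\bm{\eta}}^k$ with $\lambda_k = 1/\tau_{k,i} = 1/(\kappa_1 k^\beta)$ on $\mathcal{S}_2$. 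For any $\bm{w}^* \in F^{-1}(0)$, the averaged-operator identity gives
\[
\|T_{\lambda_k}(\bm{w}^k) - \bm{w}^*\|^2 \le \|\bm{w}^k - \bm{w}^*\|^2 - \lambda_k(1-\lambda_k) \, r_k^2 / L^2.
\]
Absorbing the cross-terms with $\bm{r}^k, \tilde{\bm{\eta}}^k$ by Young's inequality, and using $\|\bm{r}^k\| \le L \tau_{k,i}^{-2} r_k$ together with $\|\bm{\eta}^k\| \le C_\eta k^{-\beta}$, produces
\[
\|\bm{w}^{k+1} - \bm{w}^*\|^2 \le \|\bm{w}^k - \bm{w}^*\|^2 - c \, \lambda_k \, r_k^2 + e_k, \quad k \in \mathcal{S}_2,
\]
with $\sum_k e_k < \infty$. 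Telescoping yields $\sum_{k \in \mathcal{S}_2} \lambda_k r_k^2 < \infty$. Since $\beta \le 1$ and $|\mathcal{S}_2| = \infty$, $\sum_{k \in \mathcal{S}_2} \lambda_k = \infty$, forcing $\liminf_{k \in \mathcal{S}_2} r_k = 0$, and hence $\liminf_k r_k = 0$.

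To upgrade $\liminf r_k = 0$ to $\lim r_k = 0$, I would argue directly. Fix $\epsilon > 0$ and choose $k_n$ with $r_{k_n} \le \epsilon/4$ and $k_n$ so large that $\sum_{j \ge k_n} M/j^{3\beta} \le \epsilon^2/16$ and $\sup_{j \ge k_n} \varsigma_j \le \epsilon/4$. Inductively, any $k \ge k_n$ in $\mathcal{S}_2$ satisfies $r_{k+1}^2 \le r_{k_n}^2 + \sum_{j=k_n}^{k} M/j^{3\beta} \le \epsilon^2/4$; once the rolling window has fully refreshed past $k_n$ (i.e., $k \ge k_n + \zeta - 1$), any step $k \in \mathcal{S}_1$ obeys $\bar{M}_k \le \epsilon/2$, and hence \eqref{eq:decrease-1} gives $r_{k+1} \le \nu \epsilon/2 + \varsigma_k \le \epsilon/2$. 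Thus $r_k \le \epsilon$ for all $k \ge k_n + \zeta$, and since $\epsilon > 0$ is arbitrary, $r_k \to 0$.

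The main obstacle will be controlling the correction step $\mathcal{P}_{\theta^k, l^k, \rho^k, \sigma}$ inside the Fej\'er analysis, since in $\mathcal{S}_1$ the iterate $\bm{w}^{k+1} = \mathcal{P}(\bar{\bm{w}}^{k,i})$ could a priori be moved away from $\bm{w}^*$ and destroy Fej\'er monotonicity. The resolution is that along $\mathcal{S}_1$ the residual decrease is certified directly by the test \eqref{eq:decrease-1}, independently of $\mathcal{P}$, so the averaged-operator inequality only needs to be invoked on $\mathcal{S}_2$ (where no correction is applied). A secondary subtlety is that the summability of the cross-term errors $e_k$ requires uniform boundedness of $\|\bm{w}^k - \bm{w}^*\|$, which is obtained bootstrap-style from the accumulated Fej\'er inequality combined with the summability of the error series.
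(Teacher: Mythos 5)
Your two ``pure'' cases are handled essentially as in the paper: when \eqref{eq:decrease-2} occurs only finitely often, the nonmonotone recursion $r_{k+1}\le\nu\bar M_k+\varsigma_k$ gives $r_k\to 0$ (the paper cites a lemma of Armand--Omheni for exactly this), and when \eqref{eq:decrease-1} occurs only finitely often, the perturbed-averaged-operator/Fej\'er telescoping gives $\sum_k\tau_{k,i}^{-1}r_k^2<\infty$ and hence the conclusion. The gap is in the genuinely mixed case where both step types occur infinitely often, which your argument does not cover correctly, for three compounding reasons. First, telescoping the Fej\'er inequality only over $k\in\mathcal{S}_2$ requires that $\|\bm{w}^k-\bm{w}^*\|$ be controlled across the interleaved $\mathcal{S}_1$ blocks; but an $\mathcal{S}_1$ step only certifies a decrease of the \emph{residual}, not of the distance to $\bm{w}^*$ (no error bound is assumed at this stage), and the correction map $\mathcal{P}_{\theta,l,\rho,\sigma}$ can move the iterate further, so the chain $\|\bm{w}^{k+1}-\bm{w}^*\|^2\le\|\bm{w}^k-\bm{w}^*\|^2-c\,\lambda_k r_k^2+e_k$ cannot be concatenated across the gaps. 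Second, even granting the telescoping, the claim $\sum_{k\in\mathcal{S}_2}\lambda_k=\infty$ is false for a general infinite subset: with $\lambda_k\asymp k^{-\beta}$ and, say, $\mathcal{S}_2=\{2^j\}$, this sum converges, so $\liminf_{k\in\mathcal{S}_2}r_k=0$ does not follow. Third, the upgrade from $\liminf$ to $\lim$ breaks at the window refresh: if $r_{k_n}\le\epsilon/4$ but some $r_j$ with $k_n-\zeta<j<k_n$ is of order $\sqrt{M_1}$, then an $\mathcal{S}_1$ step at $k=k_n$ only yields $r_{k_n+1}\le\nu\bar M_{k_n}+\varsigma_{k_n}\approx\nu\sqrt{M_1}$; this large value re-enters the window and can prevent it from ever refreshing down to level $\epsilon$, so ``once small, stays small'' fails under a nonmonotone test.

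The paper's case (iii) avoids all of this by working with the residual rather than the distance: it lists the indices $k_1<k_2<\cdots$ at which \eqref{eq:decrease-1} holds, bounds the growth over each intervening stretch of \eqref{eq:decrease-2} steps by the increment estimate of Lemma \ref{lem:glo}, namely $r_{j+1}^2\le r_j^2+M/j^{3\beta}$ with $3\beta>1$ making the accumulated growth summable, and then derives the contraction $c_{n+1}\le\nu c_n+(\text{summable terms})$ for windowed maxima $c_n$ of $\{\|F(\bm{w}^{k_j+1})\|\}$, whence $c_n\to 0$ and finally $r_k\to0$ for all $k$ via the same increment bound. To repair your proof while keeping its structure, replace the Fej\'er/liminf argument in the mixed case by this residual-recursion argument (and reserve the Fej\'er telescoping for the subcase where \eqref{eq:decrease-1} occurs only finitely often).
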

\begin{proof}
 Let us consider the following three cases, i.e., (i) the total number of steps involving \eqref{eq:decrease-1} is finite, (ii) the total number of steps involving \eqref{eq:decrease-2} is finite, (iii) steps associated with both \eqref{eq:decrease-1} and \eqref{eq:decrease-2} are encountered infinitely many times. Specifically, if (i) happens, according to the update of $\bm{w}^{k+1}$ \eqref{eqn:updateX2}, we have
\[
\begin{aligned}
\| \bm{w}^{k+1} - \bm{w}_* \| &\le  (1 - \tau_{k,i}^{-1})\|\bm{w}^k - \bm{w}_*\| + \tau_{k,i}^{-1}\|T(\bm{w}^k) - \bm{w}_* \| + \tau_{k,i}^{-1}\|\bm{e}^k\| \\
&\le \|\bm{w}^k - \bm{w}_* \| + \tau_{k,i}^{-1}\|\bm{e}^k\|,
\end{aligned}
\]
where $\bm{e}^k = \tau_{k,i}(\bm{r}^k + \tilde{\bm{\eta}}^k)$. Since  $\tau_{k,i}^{-1}\|\bm{e}^k\| $ is summable,
it follows that $\|\bm{w}^{k+1}-\bm{w}_*\|^2$ is bounded. In this case, for every $\bm{w}_* \in \bm{W}_*$, we have
\begin{equation*}
\begin{aligned}
& \|\bm{w}^{k+1} -\bm{w}_* \|^2  = \|(1- \tau_{k,i}^{-1}) (\bm{w}^k - \bm{w}_*) + \tau_{k,i}^{-1}(T(\bm{w}^k) - \bm{w}_* + \bm{e}^k )  \|^2 \\
& = (1- \tau_{k,i}^{-1})\| \bm{w}^k -\bm{w}_* \|^2 - \tau_{k,i}^{-1}(1-\tau_{k,i}^{-1}) (\|F(\bm{w})\|^2 + 2\iprod{F(\bm{w})}{\bm{e}^k} + \|\bm{e}^k \|^2) \\
& +\tau_{k,i}^{-1} (\|T(\bm{w}^k) - \bm{w}_* \|^2 + 2\iprod{T(\bm{w}^k) - \bm{w}_*}{\bm{e}^k} + \|\bm{e}^k\|^2 ) \\
& \le \| \bm{w}^k -\bm{w}_* \|^2  - \tau_{k,i}^{-1}(1 - \tau_{k,i}^{-1}) \|F(\bm{w}^k) \|^2  + \underbrace{2 \tau_{k,i}^{-1} \iprod{\bm{w}^k - \bm{w}_*}{\bm{e}^k} + (\tau_{k,i}^{-2} ) \|\bm{e}^k\|^2 }_{:= \bm{\xi}^k} \\
& \le \| \bm{w}^k -\bm{w}_* \|^2 - \tau_{k,i}^{-1}(1 - \tau_{k,i}^{-1}) \|F(\bm{w}^k) \|^2 + 2\tau_{k,i}^{-1}\|\bm{w}^k-\bm{w}_*\| \|\bm{e}^k\| + \tau_{k,i}^{-2}\|\bm{e}^k\|^2,
\end{aligned}
\end{equation*}
where the second equality follows from $\|\tau_{k,i}^{-1}a + (1-\tau_{k,i}^{-1})b\|^2 =  \tau_{k,i}^{-1}\|a\|^2 + (1 - \tau_{k,i})\|b\|^2 + \tau_{k,i}(1 - \tau_{k,i})\|a + b\|^2$ for any $a,b \in \mathbb{R}^n$.
Since $\|\bm{w}^k - \bm{w}_*\|$ is bounded, it follows that $\|\bm{\xi}^k\|^2$ is summable and hence we have
\begin{equation} \label{eqn:complexity-first}
 \sum_{i=0}^{\infty}\tau_{k,i}^{-1} \|F(\bm{w}^i) \|^2 \le \|\bm{w}^0-\bm{w}_*\|^2 + \sum_{i=0}^{\infty} \| \bm{\xi}^i \|^2.
\end{equation}
 Given that $\tau_{k,i} = \kappa_1 k^{\beta}$ and $\beta \in (\frac{1}{3},1)$, there exists a constant $C_F$ such that $\|F(\bm{w}^k)\|^2 \le C_F{k^{-\frac{2}{3}}}$. Hence it follows that \eqref{eq:con-w} holds.
We next consider the case (ii) where there exists a $K_0 > 0$ such that $\|F(\bm{w}^{k+1})\| \leq \nu \max_{\max(1, k-\zeta +1)\leq j \leq k} \|F(\bm{w}^{j})\| + \varsigma_k $ for  $k \geq K_0$. It follows from \cite[Lemma 3.2]{armand2017globally} that \eqref{eq:con-w} holds. In terms of (iii), let $k_1, k_2, \ldots$ be the indices corresponding to the steps where \eqref{eq:decrease-1} hold.
According to Lemma \ref{lem:glo}, we have for any $k_n + 1 < j \leq k_{n+1}$,
    \be \label{eq:increment-res}\begin{aligned}
    \|F(\bm{w}^{j+1})\|^2 & \leq \|F(\bm{w}^{j})\|^2 + \frac{ M}{j^{3\beta}} \leq \|F(\bm{w}^{k_{n}+1})\|^2 + \sum_{k=k_n}^{j}  \frac{ M}{k^{3\beta}}.
    \end{aligned}
    \ee

    Let $a_n = \max_{\max(1, n- \zeta +1) \leq j \leq n} \|F(\bm{w}^{k_j + 1})\|$ and $\pi_n:= \sqrt{\sum_{k=k_n+1}^{k_{n+1}-1}  \frac{ M}{k^{3\beta}}}$.
     Due to $k_{n+ \zeta} - k_n \geq \zeta$, we have the recursion
    \be \label{eq:est1}  \| F(\bm{w}^{k_{n+1} + 1}) \| \leq \nu a_n + \nu \max_{ \max(1, n- \zeta +1) \leq j \leq n} (\pi_j + \varsigma_{k_j}). \ee
    Denote $c_n := a_{n\zeta}$. Inequality \eqref{eq:est1} gives
    \begin{equation} \label{eq:cn}
    c_{n+1} \leq \nu c_n + \nu \sum_{l=n \zeta}^{(n+1)\zeta - 1} \max_{ \max(1, l- \zeta +1) \leq j \leq l} (\pi_j + \varsigma_{k_j}).
    \end{equation}
    It follows from \cite[Lemma 2]{xu2015augmented} that there exist constants $C_1 > 0$ and $C_2 > 0$ such that
    \be \label{eq:an} \sum_{n=0}^{\infty}\|c_{n+1}\|^2 \leq C_1 \sum_{n=0}^{\infty} \left[ \sum_{l=n \zeta}^{(n+1)\zeta-1} \max_{ \max(1, l- \zeta +1) \leq j \leq l} (\pi_j  + \varsigma_{k_j}) \right]^2 + C_2. \ee
    Note that for sufficiently large $n \geq N$,
    \[ \begin{aligned}
        &\left[ \sum_{l=n \zeta}^{(n+1)\zeta-1} \max_{ \max(1, l- \zeta +1) \leq j \leq l}\pi_j + \varsigma_{k_j} \right]^2 \leq \left(\sum_{l = n\zeta}^{(n+1)\zeta -1 } \sum_{j=l-\zeta+1}^l \pi_j + \varsigma_{k_j} \right)^2 \\ &\leq \left( \zeta \sum_{l={(n-1)\zeta+1 }}^{(n+1) \zeta}  \left(\pi_l  + \varsigma_{k_l} \right)  \right)^2
         \leq 4\zeta^2 \sum_{k=k_{(n-1)\zeta +1}}^{k_{(n+1) \zeta}}  \left(\frac{M}{k^{3\beta}}  + \varsigma_k^2 \right).
    \end{aligned}
     \]
    This implies
    \[
    \sum_{n=N}^{\infty} \left[ \sum_{l=n \zeta}^{(n+1)\zeta-1} \max_{ \max(1, l- \zeta +1) \leq j \leq l} \pi_j + \varsigma_n \right]^2 \leq 8 \zeta^3 \sum_{n=k_{(N-1) \zeta}}^{\infty} \left(\frac{M}{n^{3\beta}} + \varsigma_n^2 \right) < \infty
    \]
    and $\lim_{n \rightarrow \infty} \|c_n\| = 0.$    Furthermore, by the definition of $c_n$ and the summability of $\{ \varsigma_k^2\}_{k=1}^{\infty}$, we have $\lim_{j \rightarrow \infty} \|F(\bm{w}^{k_j + 1})\| \rightarrow 0$. Using \eqref{eq:increment-res} and $\beta > \frac{1}{3}$,  \eqref{eq:con-w} holds.
\end{proof}

The above proof combines the explicit descent in \eqref{eq:decrease-1} with the implicit descent in \eqref{eq:decrease-2} derived from the inexact first-order step. Since the correction step $\mathcal{P}_{\theta^k, l^k, \rho^k, \sigma}(\bar{\bm{w}}^{k,i})$ is accepted only when \eqref{eq:decrease-1} holds, the above analysis of global convergence is independent of the projection.

\subsection{Local convergence}
The goal in this section is to establish the superlinear convergence of Algorithm \ref{alg:ssn}, which is proved in \citep{hu2025analysis} under the SC condition. However,  since the SC condition may not hold in general, we aim to extend the desired result with the correction step without assuming SC.
The proof outline is listed below.
First, we show the manifold identification of the iterates empowered by the correction step. Then, under the smoothness of $F$ on the manifold,  the local superlinear convergence for $\{\bm{w}^k\}$ holds under the local error bound condition. We introduce the notion of partial smoothness \citep{lewis2022partial} in the following.
\begin{definition}[$C^p$-partial smoothness]\label{def-psmooth}
Consider a proper closed function $\phi:\R^n\rightarrow \bar{\mathbb{R}}$ and a $C^p$ $(p \ge 2)$ embedded submanifold $\Mcal$ of $\R^n$. The function $\phi$ is said to be $C^p$-partly smooth at $x \in \Mcal$ for $v \in \partial \phi(x)$ relative to $\Mcal$ if
\begin{itemize}
    \item[(i)] Smoothness: $\phi$ restricted to $\mathcal{M}$ is $C^{p}$-smooth near $x$.
    \item[(ii)] Prox-regularity: $\phi$ is prox-regular at $x$ for $v$.

    \item[(iii)] Sharpness: $\mathrm{par}\, \partial_p \phi(x) = N_{\Mcal} (x)$, where $\partial_p \phi(x)$ denotes the set of proximal subgradients of $\phi$ at point $x$, $\mathrm{par}\, \Omega$ is the subspace parallel to $\Omega$ and $N_{\Mcal} (x)$ is the normal space of $\Mcal$ at $x$.
    \item[(iv)] Continuity: There exists a neighborhood $V$ of $v$ such that the set-valued mapping $V \cap \partial \phi$ is inner semicontinuous at $x$ relative to $\mathcal{M}.$
\end{itemize}
\end{definition}


Since $h$ is an elementwise indicator function on a
polyhedral convex set, we can check that $h,\delta_{\mathcal{Q}},$ and $\delta_{\mathcal{K}}$ are partly
smooth \citep{lewis2002active}.
One usage of partial smoothness is connecting the relative interior condition in (iii) with SC to derive certain smoothness in nonsmooth optimization \citep{bareilles2023newton}.
In addition to the partial smoothness, the local error bound condition \citep{yue2019family} is a powerful tool for analyzing convergence in the absence of nonsingularity.
\begin{definition}
We say the local error bound condition holds if there exist $\gamma_l > 0$ and $\varepsilon > 0$ such that for all $\bm{w}$ with ${\rm dist}(\bm{w},\bm{W}_*)\le \varepsilon$, it holds that
\begin{equation}
    \label{eq:eb} \| F(\bm{w}) \| \geq \gamma_{l} {\rm dist}(\bm{w}, \bm{W}_*),
\end{equation}
    where $\bm{W}_*$ is the solution set of $F(\bm{w}) = 0$ and ${\rm dist}(\bm{w}, \bm{W}_*):=\argmin_{\bm{u} \in \bm{W}_*} \|\bm{w} - \bm{u}\|$.
\end{definition}

We next present some notations. For any solution $\bm{w}_* \in \bm{W}_*,$ let $\lambda^* \in \R^n$ be the eigenvalues of $ \sigma(\mathcal{A}^*(\bm{y}_*) + \bm{z}_* -\bm{c}) + \bm{x}_*$. We denote the threshold values with respect to $\bm{w}_*$ as
\begin{equation} \label{def:para}
\begin{aligned}
\bar{\theta} & := \min(\lambda^*_{\alpha_1 },|\lambda^*_{\alpha_2}| ), \\
\bar{l} & := \min_{i \in
I_{\mathcal{Q}}} \{ {\rm dist}(-(\bm{y}_*)_i,\text{bd}(\mathcal{N}_{\mathcal{Q}_i}((\bm{u}_*)_i)))  \},
\\
 \bar{\rho} &:= \min_{(i,j) \in I_h } \{ {\rm dist}(-(\bm{z}_*)_{i,j},\text{bd}(\partial h_{i,j}((\bm{x}_*)_{i,j}) ))   \},
\end{aligned}
\end{equation}
where $ \alpha_1 := \{ i | \lambda_i^* >0\}, \alpha_2 := \{ i | \lambda_i^* < 0\} ,
 I_{\mathcal{Q}} := \{i|  -(\bm{y}_*)_{i} \in {\rm ri}(\partial \mathcal{N}_{\mathcal{Q}_i}((\bm{u}_*)_i) )  \},  I_h := \{ (i,j)| -(\bm{z}_*)_{i,j} \in {\rm ri}(\partial h_{i,j}((\bm{x}_*)_{i,j})  \},$ and $ h_{i,j}$ denotes the function $h$ restricted to index $(i,j)$. According to \cite[Page 64, Exercise 2.11]{rockafellar2009variational}, the relative interior and interior coincide in the case of $\R$. Hence it follows from the definition of $\alpha_1,\alpha_2,I_{\mathcal{Q}},I_h$ that $\bar{\theta},\bar{l}$, and $\bar{\rho}$ are well defined.

Firstly, we show by the assumptions on $h, \mathcal{Q}$ and $\mathcal{K}$ that there exists a manifold defined with respect to each solution $\bm{w}_* \in \bm{W}_*$, and such manifold will be useful in the analysis of local smoothness of $F$.

 \begin{lemma} \label{lem:local-smooth}
 For given $\sigma$ and $\bm{w}_* \in \bm{W}_*$, we define the following sets:
\begin{equation}
\begin{aligned}
\overline{\mathcal{M}}_{\bm{w}_*,I_h} &:= \{ \bm{w}| (\bm{q} - \sigma \bm{z})_{i,j} = (\bm{q}_* - \sigma \bm{z}_*)_{i,j}  ~~ \text{for} ~~ (i,j) \notin I_h   \}, \\
\overline{\mathcal{M}}_{\bm{w}_*, I_{\mathcal{Q}}} &:= \{ \bm{w}|  (\bm{u} -  \sigma \bm{y})_i = (\bm{u}_* -  \sigma \bm{y}_*)_i ~~\text{for}~~ i \notin I_\mathcal{Q} \}, \\
\overline{\mathcal{M}}_{\bm{w}_*, r_{\mathcal{K}}} &:= \{ \bm{w}| \mathrm{rank}\left(\bm{x} -  \sigma (\mathcal{A}^*(\bm{y}) -\bm{c} + \bm{z})\right) = |\alpha_1| + |\alpha_2|  \}. \\
\end{aligned}
\end{equation}
Then $\overline{\mathcal{M}}_{\bm{w}_*} := \overline{\mathcal{M}}_{\bm{w}_*,I_h} \cap \overline{\mathcal{M}}_{\bm{w}_*,I_\mathcal{Q}} \cap \overline{\mathcal{M}}_{\bm{w}_*,r_{\mathcal{K}}}$ is a manifold.
\end{lemma}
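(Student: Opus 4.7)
The plan is to treat each of the three sets separately and then argue that their intersection is again a manifold by a transversality argument. For $\overline{\mathcal{M}}_{\bm{w}_*,I_h}$ and $\overline{\mathcal{M}}_{\bm{w}_*,I_{\mathcal{Q}}}$ the conclusion is essentially by inspection: each is defined by a collection of linear scalar equations of the form $(\bm{q}-\sigma\bm{z})_{i,j} = \text{const}$ for $(i,j)\notin I_h$, respectively $(\bm{u}-\sigma\bm{y})_i = \text{const}$ for $i\notin I_{\mathcal{Q}}$. These are affine subspaces of the ambient product space and are therefore smooth embedded submanifolds. I would make this observation explicit and record the codimensions, since they will be needed to count dimensions of the intersection.

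For $\overline{\mathcal{M}}_{\bm{w}_*,r_{\mathcal{K}}}$, I would use the standard fact (see, e.g., Helmke--Shayman, or Vandereycken) that the set $\mathcal{F}_r := \{M \in \mathbb{S}^n : \mathrm{rank}(M) = r\}$ of symmetric matrices of exactly rank $r = |\alpha_1|+|\alpha_2|$ is a smooth embedded submanifold of $\mathbb{S}^n$, with a known tangent space description. Define the linear map
\begin{equation*}
L(\bm{w}) = \bm{x} - \sigma(\mathcal{A}^*(\bm{y}) - \bm{c} + \bm{z}),
\end{equation*}
so that $\overline{\mathcal{M}}_{\bm{w}_*,r_{\mathcal{K}}} = L^{-1}(\mathcal{F}_r)$. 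Since $\partial L/\partial \bm{x} = I$ on $\mathbb{S}^n$, the differential $DL$ is surjective onto $\mathbb{S}^n$ everywhere, so $L$ is a submersion. Hence by the preimage theorem, $L^{-1}(\mathcal{F}_r)$ is an embedded submanifold of the ambient space.

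The main step, and the one requiring a little care, is showing that the three manifolds intersect transversally so that $\overline{\mathcal{M}}_{\bm{w}_*}$ is a manifold. The crucial structural fact is that the affine constraints defining $\overline{\mathcal{M}}_{\bm{w}_*,I_h}$ and $\overline{\mathcal{M}}_{\bm{w}_*,I_{\mathcal{Q}}}$ do not involve $\bm{x}$ at all. Consequently, the tangent space to $\overline{\mathcal{M}}_{\bm{w}_*,I_h}\cap \overline{\mathcal{M}}_{\bm{w}_*,I_{\mathcal{Q}}}$ at any point contains the full $\bm{x}$-subspace $\mathbb{S}^n \times \{0\}$. Since $DL$ already maps this $\bm{x}$-subspace surjectively onto $\mathbb{S}^n$, the restriction of $DL$ to the tangent space of the affine intersection remains surjective, and transversality with $\mathcal{F}_r$ follows. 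I would make this precise by writing the tangent-space sum decomposition at a point $\bm{w}\in \overline{\mathcal{M}}_{\bm{w}_*}$ and invoking the transverse intersection theorem.

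The main obstacle I anticipate is being careful about the description of $\mathcal{F}_r$ and its tangent space, and verifying that $\bm{w}_*$ itself lies in $\overline{\mathcal{M}}_{\bm{w}_*}$ so that the intersection is nonempty and the rank is indeed exactly $|\alpha_1|+|\alpha_2|$ on a neighborhood. For the latter, I would note that $\sigma(\mathcal{A}^*\bm{y}_* - \bm{c} + \bm{z}_*) + \bm{x}_*$ has eigenvalues precisely $\lambda^*$ with $|\alpha_1| + |\alpha_2|$ nonzero entries by the definition of $\alpha_1,\alpha_2$, and that the rank is locally constant on $\mathcal{F}_r$ by openness of the nonzero eigenvalues. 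With these ingredients in place, the manifold property of $\overline{\mathcal{M}}_{\bm{w}_*}$ follows from the submersion/transversality machinery above.
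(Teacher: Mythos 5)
Your proof is correct, and it reaches the same conclusion by a more modular route than the paper. The paper builds a single explicit local defining function $\overline{\psi}=(\psi,g_{\mathcal{Q}},g_h)$ for the whole intersection, where $\psi=f\circ g$ uses the Schur-complement map $f(\bm{r})=\bm{r}_{22}-\bm{r}_{21}\bm{r}_{11}^{-1}\bm{r}_{12}$ as the local chart for the rank condition, and then verifies surjectivity of $D\overline{\psi}$ directly by exhibiting a preimage of any target $(\bar{\bm{a}},\bar{\bm{b}},\bar{\bm{c}})$ using only the $(\bm{x},\bm{u},\bm{q})$ directions. You instead black-box the fixed-rank symmetric manifold $\mathcal{F}_r$, observe that $L$ is a submersion because $\partial L/\partial\bm{x}=I$, and then intersect with the two affine subspaces via transversality. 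The underlying mathematics is identical --- your submersion-onto-$\mathbb{S}^n$ via the $\bm{x}$-block is exactly the paper's choice $\bm{o}=(0,0,\bm{a},\bm{b},\bm{c})$, and the Schur-complement chart the paper writes out is the standard proof that $\mathcal{F}_r$ is a manifold --- but your packaging cites the known result rather than re-deriving it, and separates the rank constraint from the affine ones cleanly, at the cost of invoking the preimage/transversality theorems that the paper avoids by working with one explicit defining map. Your added remarks on nonemptiness (checking $\bm{w}_*\in\overline{\mathcal{M}}_{\bm{w}_*}$) and on recording codimensions are sensible bookkeeping that the paper omits. No gaps.
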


\begin{proof}
 Without loss of generality, for $\bm{w} \in \overline{\mathcal{M}}_{\bm{w}_*,r_{\mathcal{K}}}$, we assume that the $r \times r$ matrix in the upper left corner of $\bm{x} -  \sigma (\mathcal{A}^*(\bm{y}) -\bm{c} + \bm{z})$ is invertible. For any $\bm{r} = \begin{bmatrix}
    \bm{r}_{11} & \bm{r}_{12} \\
    \bm{r}_{21} & \bm{r}_{22}
\end{bmatrix} \in \mathbb{R}^{n \times n}$, where $\bm{r}_{11} \in \mathbb{R}^{r \times r}$ be invertible and $\bm{r}_{22} \in \R^{(n-r) \times (n-r)},$ define $f : \bm{x} \in \mathbb{R}^{ n\times n} \rightarrow  \bm{x}_{22} - \bm{x}_{21}\bm{x}_{11}^{-1}\bm{x}_{12} \in \mathbb{R}^{(n-r) \times (n-r)},  $ and $g : \bm{w}
 \in \mathbb{R}^{4n \times n + m} \rightarrow  \bm{x} -  \sigma (\mathcal{A}^*(\bm{y}) -\bm{c} + \bm{z}) \in \mathbb{R}^{n \times n}$. Then, for every $\bm{w} \in \overline{\mathcal{M}}_{\bm{w}_*,r_{\mathcal{K}}}$, it follows that $\psi(\bm{w}) =0$, where $\psi := f \circ g$. If the $r \times r$ matrix in the upper left corner of $\bm{x} -  \sigma (\mathcal{A}^*(\bm{y}) -\bm{c} + \bm{z})$ is not invertible, we can
construct another local defining function using the same procedure \cite[Section 7.5]{boumal2023introduction}. For $g_{\mathcal{Q}}(\bm{w}) := ((\bm{u} -  \sigma \bm{y}) - (\bm{u}_* -  \sigma \bm{y}_*))_{I_{\mathcal{Q}}^c }$, $g_h(\bm{w}) := ((\bm{q} -  \sigma \bm{z}) - (\bm{q}_* -  \sigma \bm{z}_*))_{I_{h}^c }$, we define
\begin{equation}
\overline{\psi}:= \begin{bmatrix}
    \psi \\
      g_{\mathcal{Q}}  \\
    g_{h}
\end{bmatrix} : \mathbb{R}^{4 n \times n + m} \rightarrow \mathbb{R}^{(n-r) \times (n-r) + |I_{\mathcal{Q}}^c| + |I_h^c| }.
\end{equation}
We now prove that the Jacobian matrix of $\overline{\psi}$ is full rank.   According to \cite[Section 7.5]{boumal2023introduction}, for given $\bm{r} \in \mathbb{R}^{n \times n}, \bm{w} \in \mathbb{R}^{4 n \times n +m}$ and any $\bm{g} \in \mathbb{R}^{n \times n}, \bm{o}_1 \in \mathbb{R}^m, \bm{o}_2,\bm{o}_3,\bm{o}_4,\bm{o}_5 \in \mathbb{R}^{n \times n}, \bm{o} =[\bm{o}_1,\bm{o}_2,\bm{o}_3,\bm{o}_4,\bm{o}_5] \in \mathbb{R}^{4 n\times n + m}$, it follows that
\begin{equation} \label{def:Jfg}
\begin{aligned}
Df(\bm{r}) [\bm{g}] &= \bm{g}_{22} - \bm{g}_{21}\bm{r}_{11}^{-1}\bm{r}_{12} + \bm{r}_{21}\bm{r}_{11}^{-1}\bm{g}_{11}\bm{r}_{11}^{-1}\bm{r}_{21} - \bm{r}_{21}\bm{r}_{11}^{-1}\bm{g}_{12}, \\
D g(\bm{w}) [\bm{o}] &= -\sigma\mathcal{A}^*\bm{o}_{1}-\sigma \bm{o}_{2}+\bm{o}_3,\\
D g_{\mathcal{Q}}(\bm{w}) [\bm{o}] & = (\bm{o}_4 - \sigma \bm{o}_1)_{I_{\mathcal{Q}}^c},\\
D g_{h}(\bm{w}) [\bm{o}] & = (\bm{o}_5 - \sigma \bm{o}_2)_{I_{h}^c}.
\end{aligned}
\end{equation}
Consequently, for any $( \bar{\bm{a}} , \bar{\bm{b}},\bar{\bm{c}} ) \in \mathbb{R}^{(n-r) \times (n-r) + |I_{\mathcal{Q}}^c| + |I_h^c|}$ with $\bar{\bm{a}} \in \mathbb{R}^{(n-r) \times (n-r)},\bar{\bm{b}} \in \R^{|I_{\mathcal{Q}}^c|}, \bar{\bm{c}} \in \R^{|I_h^c|}$,
set $\bm{o} = (0,0,\bm{a},\bm{b},\bm{c}),$ where $\bm{a} = \begin{bmatrix}
    0 & 0\\
    0 & \bar{\bm{a}}
\end{bmatrix} \in \mathbb{R}^{n \times n}, (\bm{b})_{I_{\mathcal{Q}}^c} = \bar{\bm{b}} $, and $  (\bm{c})_{I_h^c } = \bar{\bm{c}}$. It follows from the definition of $\bar{\psi}$ that $D \bar{\psi} [\bm{o}] = (\bar{\bm{a}} , \bar{\bm{b}},\bar{\bm{c}} ),$ which indicates that $D \bar{\psi}$ is full rank.
 Consequently, $\bar{\psi}(\bm{w}) =0$ defines a manifold locally and we complete the proof.
\end{proof}

We then make the following assumptions.
\begin{assum} \label{assum:local} Let $\{\bm{w}^k\}$ be the iterate sequence generated by Algorithm \ref{alg:ssn}.
\begin{enumerate}[label=\textup{\textrm{(A\arabic*)}},topsep=0pt,itemsep=0ex,partopsep=0ex]
    \item  \label{assumption:A2} The local error bound condition \eqref{eq:eb} holds.
    \item \label{assumption:A3} The sequence $ \{ \bm{\eta}^k \}$ satisfies $\|\bm{\eta}^k\|\le c_{\eta} \|F(\bm{w}^k)\|^q $, where $q \in (1,2]$.
    \item   \label{assumption:A4} For a solution $\bm{w}_* \in \bm{W}_*$, the correction parameters satisfy $\theta^k \in (0,\bar{\theta}),l^k \in (0,\bar{l})$, and $ \rho^k \in (0,\bar{\rho})$, where $\bar{\theta},\bar{l}$, and $\bar{\rho}$ are defined in \eqref{def:para} with respect to $\bm{w}_*$ .
\end{enumerate}
\end{assum}

\begin{remark}
The Assumptions  \ref{assumption:A2}, and \ref{assumption:A3} are commonly used assumptions in local convergence analysis \citep{hu2025analysis,bareilles2023newton,yue2019family}.      As shown in \citet[Corollary 1]{ding2023strict}, a variant of the local error bound condition holds under strong duality and dual strict complementarity.   Furthermore, Assumption~\ref{assumption:A2} is much weaker than the BD-regularity condition adopted in \citep{xiao2018generalized} and \citep{li2018semismooth}.  We also conduct some numerical verifications of \ref{assumption:A2} in Section \ref{num-ver}.
Assumption~\ref{assumption:A3} specifies the solution accuracy of the Newton equation, which is readily satisfied in numerical experiments. Assumption \ref{assumption:A4} is assumed with respect to a solution $\bm{w}_* \in \bm{W}_*$. It is used to ensure the manifold identification of the iterations and desired smoothness of $F$.
We note that Assumption \ref{assumption:A4} is designed for the general problem \eqref{prob} and can
be simplified for concrete examples. Particularly,
 for SDP, since there is no $h$ and $\mathcal{Q}$, the Assumption \ref{assumption:A4} reduces to $\theta \in (0,\bar{\theta}) $. A similar condition has also been assumed in \cite[Theorem 2]{feng2024quadratically}. However, our approach to defining these parameters and the correction step differs substantially. Specifically, we identify a smooth manifold on which the iterates eventually lie and establish superlinear convergence without requiring any nonsingular element in the generalized Jacobian of
$F$, unlike \citep{feng2024quadratically}. This indicates the applicability of our superlinear convergence to nonisolated minima, while their result does not cover it.
 Furthermore, if SC holds, it follows that $F$ is locally smooth on the whole space \citep[Lemma 2]{hu2025analysis}. Hence no correction step is needed.
\end{remark}

For a solution $\bm{w}_* \in \bm{W}_*$, let $ \mathcal{M}_{\bm{w}_*,h}, \mathcal{M}_{\bm{w}_*,\mathcal{Q}},
$ and $ \mathcal{M}_{\bm{w}_*,\mathcal{K}} $ be the associated smooth manifolds of the partly smooth functions  $h, \delta_{\mathcal{Q}}, \delta_{\mathcal{K}} $. It follows from the definition of partial smoothness and $\bm{q}_* = \bm{x}_*$ that  $\bm{q}_* \in \mathcal{M}_{\bm{w}_*,h} , \bm{u}_* \in \mathcal{M}_{\bm{w}_*,\mathcal{Q}}, \bm{x}_* \in \mathcal{M}_{\bm{w}_*,\mathcal{K}}$.   Based on the construction of correction step, we have the following local smoothness result.

\begin{lemma}
For a solution $\bm{w}_* \in \bm{W}_*$, there exists a neighborhood $V$ of $\bm{w}_*$ such that
$F(\bm{w})$ is locally smooth on $V \cap \overline{\mathcal{M}}_{\bm{w}_*} $.
\end{lemma}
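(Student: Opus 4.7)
The plan is to read off from \eqref{eq:grad-F} that every component of $F$ is built from the three primitive operators $\Pi_{\mathcal{K}}(M)$ with $M := \bm{x} + \sigma(\mathcal{A}^*\bm{y} + \bm{z} - \bm{c})$, $\Pi_{\mathcal{Q}}(\bm{u} - \sigma\bm{y})$, and $\prox_{\sigma h}(\bm{q} - \sigma \bm{z})$, composed with the linear (hence smooth) maps $\bm{w}\mapsto M$, $\bm{w}\mapsto \bm{u} - \sigma\bm{y}$, and $\bm{w}\mapsto \bm{q} - \sigma\bm{z}$. So it suffices to show that each of the three operators, restricted to the corresponding factor of $\overline{\mathcal{M}}_{\bm{w}_*}$, admits a smooth extension to an ambient neighborhood of the argument at $\bm{w}_*$. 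Composing these extensions with the linear maps and taking the combinations in \eqref{eq:grad-F} produces the desired smooth extension of $F$ to an ambient neighborhood $V$, and the conclusion follows.

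For $\Pi_{\mathcal{Q}}$, because $\mathcal{Q}$ is a product of one-dimensional intervals, the projection is coordinatewise. For $i\in I_{\mathcal{Q}}$, the relative interior condition $-(\bm{y}_*)_i\in\mathrm{ri}(\mathcal{N}_{\mathcal{Q}_i}((\bm{u}_*)_i))$ places $(\bm{u}_* - \sigma\bm{y}_*)_i$ strictly inside one of the two smooth pieces of the one-dimensional projection (strictly interior to $\mathcal{Q}_i$, where the projection is the identity, or strictly on one side of a boundary point, where the projection is locally the constant map onto that boundary point); hence $\Pi_{\mathcal{Q}_i}$ is $C^\infty$ in an ordinary ambient neighborhood of the argument. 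For $i\notin I_{\mathcal{Q}}$, the manifold factor $\overline{\mathcal{M}}_{\bm{w}_*,I_{\mathcal{Q}}}$ freezes $(\bm{u}-\sigma\bm{y})_i$ to the constant $(\bm{u}_*-\sigma\bm{y}_*)_i$, so we extend simply by the constant value $\Pi_{\mathcal{Q}_i}((\bm{u}_*-\sigma\bm{y}_*)_i)$, which is trivially smooth and agrees with $\Pi_{\mathcal{Q}_i}$ on the manifold. An identical argument handles $\prox_{\sigma h}$ on $\overline{\mathcal{M}}_{\bm{w}_*,I_h}$, using that $h$ is elementwise and the definition of $I_h$ in \eqref{def:para}.

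The main obstacle, and the only genuinely nontrivial part, is $\Pi_{\mathcal{K}}$, because it is intrinsically nonsmooth at matrices with a zero eigenvalue, and $M_* = \bm{x}_* - \sigma \bm{s}_*$ has exactly $n - |\alpha_1| - |\alpha_2|$ zero eigenvalues as soon as strict complementarity fails. The key point is that on $\overline{\mathcal{M}}_{\bm{w}_*,r_{\mathcal{K}}}$ the rank of $M$ is pinned to $|\alpha_1| + |\alpha_2|$, so by continuity of the spectrum the $|\alpha_1|$ strictly positive eigenvalues and the $|\alpha_2|$ strictly negative eigenvalues of $M_*$ persist as strictly positive resp.\ strictly negative eigenvalues for nearby $M$, while the remaining eigenvalues are forced to be exactly zero. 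I would then fix a Jordan contour $\Gamma_+$ in the complex plane encircling all positive eigenvalues of $M_*$ and keeping a positive distance from both the origin and the negative half-line, and on a sufficiently small ambient neighborhood $U$ of $M_*$ define the holomorphic functional calculus extension
\begin{equation*}
 \tilde{\Pi}_{\mathcal{K}}(M) := \frac{1}{2\pi i}\oint_{\Gamma_+} \lambda (\lambda I - M)^{-1}\,\mathrm{d}\lambda,
\end{equation*}
which is analytic in $M$ because no eigenvalue of $M$ crosses $\Gamma_+$ for $M\in U$. For $M\in U\cap \overline{\mathcal{M}}_{\bm{w}_*,r_{\mathcal{K}}}$, the contour $\Gamma_+$ encloses exactly the $|\alpha_1|$ strictly positive eigenvalues of $M$ (the pinned zeros and the strictly negative eigenvalues lie outside $\Gamma_+$), so $\tilde{\Pi}_{\mathcal{K}}(M) = \Pi_{\mathcal{K}}(M)$ on the manifold.

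Putting the three extensions together and precomposing with the affine maps in \eqref{eq:grad-F} yields a smooth map on an ambient neighborhood of $\bm{w}_*$ which coincides with $F$ on $V\cap \overline{\mathcal{M}}_{\bm{w}_*}$, establishing the claim. The delicate point to verify carefully is that the index sets $I_{\mathcal{Q}}$ and $I_h$ defined via relative interiors of subdifferentials at the solution really correspond to the smoothness pieces of the respective projection and proximal maps, and that the contour $\Gamma_+$ can be chosen uniformly in a neighborhood by continuity of the spectrum; the rest is a bookkeeping exercise.
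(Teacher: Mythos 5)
Your proposal is correct, but it proves the lemma by a genuinely different route than the paper. The paper's proof goes through partial smoothness: it first shows that on the manifold the three operators $\prox_{\sigma h}$, $\Pi_{\mathcal{Q}}$, $\Pi_{\mathcal{K}}$ map into the smooth manifolds $\mathcal{M}_{\bm{w}_*,h}$, $\mathcal{M}_{\bm{w}_*,\mathcal{Q}}$, $\mathcal{M}_{\bm{w}_*,\mathcal{K}}$ associated with the partly smooth functions (using the rank pinning for $\mathcal{K}$ and a cited result of Drusvyatskiy for the polyhedral parts), and then invokes known results on the local smoothness of proximal maps of partly smooth functions restricted to such manifolds. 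You instead build explicit smooth ambient extensions: constant extensions on the frozen coordinates $i\notin I_{\mathcal{Q}}$, $(i,j)\notin I_h$, the identity/locally-constant dichotomy on the strictly complementary coordinates, and a Riesz-projector (holomorphic functional calculus) extension $\tilde{\Pi}_{\mathcal{K}}$ for the semidefinite part, verified to agree with $\Pi_{\mathcal{K}}$ on the rank-pinned manifold because the near-zero eigenvalues are forced to vanish exactly. Both arguments are sound; yours is more self-contained and makes the mechanism transparent (in particular it exhibits an analytic, not merely $C^p$, extension of the spectral part), while the paper's partial-smoothness route would generalize more readily to non-elementwise or non-polyhedral $h$. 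Two cosmetic points you should be aware of rather than gaps: the paper's definition of $\overline{\mathcal{M}}_{\bm{w}_*,r_{\mathcal{K}}}$ carries a sign inconsistency with the argument of $\Pi_{\mathcal{K}}$ appearing in $F$ (you silently use the version matching $F$, which is clearly the intended one), and your description of the one-dimensional projection as having exactly two smooth pieces needs the trivial adjustment when $\mathcal{Q}_i$ is a singleton, in which case the projection is globally constant and the conclusion is immediate.
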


\begin{proof}
Since $h,\delta_{\mathcal{Q}},$ and $\delta_{\mathcal{K}}$ are proper closed and convex functions, according to \cite[Theorem 6.3,\,6.4,\,6.42]{beck2017first},
 ${\rm prox}_{\sigma h}(\bm{x} - \sigma \bm{z}), \Pi_{\mathcal{Q}}(\bm{u} -  \sigma \bm{y}), \Pi_{\mathcal{K}}(\bm{x} -  \sigma (\mathcal{A}^*(\bm{y}) -\bm{c} + \bm{z}))$ are monotone, single-valued,  and Lipschitz.
 Define $h_{I_h}$ be the function when restricted on $I_h,$ then we can conclude that there exists a neighborhood $V_1$ of $\bm{w}_*$ such that  $\text{prox}_{\sigma h_{I_h}}(\bm{q} -\sigma \bm{z}) \in \mathcal{M}_{\bm{w}_*,h} $ for $\bm{w} \in V_1 \cap \overline{\mathcal{M}}_{\bm{w}_*,I_h}$ by \cite[Proposition 10.12]{drusvyatskiy2014optimality}. According to the definition of $\overline{\mathcal{M}}_{\bm{w}_*,I_h}$,  $\prox_{\sigma h}(\bm{q}-\sigma\bm{z}) \in \mathcal{M}_{\bm{w}_*,h} $ for all $\bm{w} \in V_1 \cap \overline{\mathcal{M}}_{\bm{w}_*,I_h}.$ The result that there exists $V_2$ such that $\Pi_{\mathcal{Q}}(\bm{u} -\sigma \bm{y}) \in \mathcal{M}_{\bm{w}_*,\mathcal{Q}} $ for $ \bm{w} \in \overline{\mathcal{M}}_{\bm{w}_*,I_{\mathcal{Q}}} \cap V_2$ can be proved analogously. Since $\mathrm{rank}\left(\bm{x}_* -  \sigma (\mathcal{A}^*(\bm{y}_*) -\bm{c} + \bm{z}_*)\right) = |\alpha_1| + |\alpha_2|$ and ${\rm rank}(\bm{x}_*) = |\alpha_1|,$ there exists a neighborhood $V_3$ of $\bm{w}_*$ such that the number of its positive eigenvalues is $|\alpha_1|$ and the number of negative eigenvalues is $|\alpha_2|$ when $\bm{w}$ is  restricted on $V_3 \cap \overline{\mathcal{M}}_{\bm{w}_*,r_{\mathcal{K}}}$. As a consequence, we have  \[
 \Pi_{\mathcal{K}}(\bm{x} -  \sigma (\mathcal{A}^*(\bm{y}) -\bm{c} + \bm{z})) \in \mathcal{M}_{\bm{w}_*,\mathcal{K}}~~ \text{for all}~~ \bm{w} \in V_3 \cap \overline{\mathcal{M}}_{\bm{w}_*,r_{\mathcal{K}}}. \]

 Combining the partial smoothness of $h,\delta_{\mathcal{Q}}$, and $\delta_{\mathcal{K}}$, and
    according to a similar scheme in \cite[Lemma 2]{hu2025analysis}, \cite[Lemma 2.1]{lewis2008alternating}, we have that ${\rm prox}_{\sigma h_1}(\bm{w}) := {\rm prox}_{\sigma h}(\bm{q} - \sigma\bm{z}), {\rm prox}_{\sigma h_2}(\bm{w}) :=\Pi_{\mathcal{Q}}(\bm{u} -  \sigma\bm{y})$, and $ {\rm prox}_{\sigma h_3}(\bm{w}) :=\Pi_{\mathcal{K}}(\bm{x} -  \sigma (\mathcal{A}^*(\bm{y})-\bm{c}+\bm{z}) )$ are locally smooth around $\bm{w}_*$ on $\overline{\mathcal{M}}_{\bm{w}_*,I_h}$, $\overline{\mathcal{M}}_{\bm{w}_*,I_{\mathcal{Q}}}$, and $  \overline{\mathcal{M}}_{\bm{w}_*,r_{\mathcal{K}}}$, respectively. According to the definition of $F$ and let $V = V_1 \cap V_2 \cap V_3$, the proof is completed.
\end{proof}

 The above result significantly differs from the existing manifold identification results in \citep{bareilles2023newton,deng2023augmented,liang2017activity} where SC is necessary. As noted in \cite[Remark 3.5]{liang2017activity}, relaxing SC is a challenging problem.  We next prove that $\{\bm{w}^k\}$ generated by Algorithm \ref{alg:ssn} stays on the smooth manifold through the correction step $\mathcal{P}_{\theta,l,\rho,\sigma}$. Furthermore, under the local smoothness condition, \eqref{eq:decrease-1} happens all the time and $\tau_{k,i}$ does not need to satisfy \eqref{eq:decrease-2}. Hence, the transition to the local superlinear convergence for $\{ \bm{w}^k \}$ holds.


\begin{theorem} \label{thm:superlinear convergence}
Let Assumptions \ref{assum},  \ref{assumption:A2}, and \ref{assumption:A3} hold.
For sequence $\{\bm{w}^k\}$ generated by Algorithm \ref{alg:ssn}, if there exist a solution $\hat{\bm{w}}_* \in \Pi_{\bm{W}_*}( \bm{w}^K )$ and a constant $\delta_{\hat{\bm{w}}_*}$  for some $\bm{w}^K \in B(\hat{\bm{w}}_*,\delta_{\hat{\bm{w}}_*} ) \cap \overline{\mathcal{M}}_{\hat{\bm{w}}_*} $, \ref{assumption:A4} holds for $\hat{\bm{w}}_*$, the following conclusions hold.
\begin{enumerate}[label=\textup{\textrm{(C\arabic*)}},topsep=0pt,itemsep=0ex,partopsep=0ex]
    \item \label{Conclusion:C2}   Condition \eqref{eq:decrease-1} always holds with $i =0$ for $k > K$ and $\bm{w}^{k} $ lies in the manifold $ \overline{\mathcal{M}}_{ \hat{\bm{w}}_* } $.
    \item  \label{Conclusion:C3} The generated sequence $\{\bm{w}^k\}$ converges to some solution $\bm{w}_* \in \overline{\mathcal{M}}_{\hat{\bm{w}}_*}$  superlinearly.
\end{enumerate}
\end{theorem}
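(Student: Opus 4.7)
The plan is to prove both conclusions simultaneously by induction on $k \ge K$, using the hypothesis that $\bm{w}^k$ lies in $B(\hat{\bm{w}}_*, \delta_{\hat{\bm{w}}_*}) \cap \overline{\mathcal{M}}_{\hat{\bm{w}}_*}$ and $\mathrm{dist}(\bm{w}^k, \bm{W}_*)$ shrinks superlinearly. The base case is supplied by the standing assumption on $\bm{w}^K$, and the inductive step decomposes into (i) a Levenberg--Marquardt type estimate for the pre-correction Newton iterate $\bar{\bm{w}}^{k,0}$, (ii) a manifold-identification argument showing that the correction operator $\mathcal{P}_{\theta^k, l^k, \rho^k, \sigma}$ acts as an exact retraction onto $\overline{\mathcal{M}}_{\hat{\bm{w}}_*}$, and (iii) a check that \eqref{eq:decrease-1} succeeds with $i = 0$ so the corrected iterate is accepted and the search over $i$ is trivial.

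For step (i), I would exploit that $F$ is $C^1$ on a neighborhood $V$ intersected with $\overline{\mathcal{M}}_{\hat{\bm{w}}_*}$ by the preceding lemma, and that $J^k$ is positive semidefinite by monotonicity of $F$, so $J^k + \tau_{k,0} I$ is invertible with $\|(J^k + \tau_{k,0} I)^{-1}\| \le \tau_{k,0}^{-1}$. Rewriting the inexact Newton equation as
\[
(J^k + \tau_{k,0} I)(\bar{\bm{w}}^{k,0} - \bm{w}_*^k) = -F(\bm{w}^k) - J^k(\bm{w}^k - \bm{w}_*^k) + \tau_{k,0}(\bm{w}_*^k - \bm{w}^k) + \bm{\eta}^k,
\]
with $\bm{w}_*^k := \Pi_{\bm{W}_*}(\bm{w}^k)$, the $C^1$ expansion $\|F(\bm{w}^k) + J^k(\bm{w}_*^k - \bm{w}^k)\| = \mathcal{O}(\|\bm{w}^k - \bm{w}_*^k\|^2)$, the choice $\tau_{k,0} = \kappa\|F(\bm{w}^k)\|$ combined with the local error bound \ref{assumption:A2} and Lipschitz continuity of $F$ giving $\tau_{k,0}$ of the same order as $\|\bm{w}^k - \bm{w}_*^k\|$, and the inexactness bound $\|\bm{\eta}^k\| = \mathcal{O}(\|\bm{w}^k - \bm{w}_*^k\|^q)$ with $q > 1$ from \ref{assumption:A3}, a standard LM argument in the spirit of \citep{yue2019family} yields
\[
\mathrm{dist}(\bar{\bm{w}}^{k,0}, \bm{W}_*) \le \|\bar{\bm{w}}^{k,0} - \bm{w}_*^k\| = \mathcal{O}\bigl(\|\bm{w}^k - \bm{w}_*^k\|^{\min(q, 2)}\bigr).
\]

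For step (ii), the key observation is that at $\hat{\bm{w}}_*$ the quantities $\bar{\theta}, \bar{l}, \bar{\rho}$ in \eqref{def:para} are strictly positive separation gaps: nonzero eigenvalues of $\bm{x}_* + \sigma(\mathcal{A}^* \bm{y}_* + \bm{z}_* - \bm{c})$ are bounded away from zero, and for $i \in I_{\mathcal{Q}}$ (resp.\ $(i,j) \in I_h$) the dual variable lies strictly in the one-dimensional relative interior of the relevant normal cone or subdifferential. By continuity of eigenvalues, projections, and proximal maps, once $\bar{\bm{w}}^{k,0}$ is close enough to $\hat{\bm{w}}_*$ and $\theta^k \in (0,\bar{\theta})$, $l^k \in (0,\bar{l})$, $\rho^k \in (0,\bar{\rho})$, the index sets $I_{\mathcal{K}}, I_h, I_{\mathcal{Q}}$ computed in \eqref{def:project} at $\bar{\bm{w}}^{k,0}$ coincide exactly with the off-manifold index sets of $\overline{\mathcal{M}}_{\hat{\bm{w}}_*}$, and the truncations zero out exactly those coordinates. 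Hence $\tilde{\bm{w}}^{k,0} = \mathcal{P}_{\theta^k, l^k, \rho^k, \sigma}(\bar{\bm{w}}^{k,0}) \in \overline{\mathcal{M}}_{\hat{\bm{w}}_*}$, and since the truncated coordinates all vanish at $\hat{\bm{w}}_*$, one gets $\|\tilde{\bm{w}}^{k,0} - \bar{\bm{w}}^{k,0}\| = \mathcal{O}(\mathrm{dist}(\bar{\bm{w}}^{k,0}, \bm{W}_*))$, so $\mathrm{dist}(\tilde{\bm{w}}^{k,0}, \bm{W}_*)$ inherits the superlinear bound from step (i).

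For step (iii), Lipschitz continuity of $F$ converts the distance estimate into $\|F(\tilde{\bm{w}}^{k,0})\| = \mathcal{O}\bigl(\|\bm{w}^k - \bm{w}_*^k\|^{\min(q,2)}\bigr) = o(\|F(\bm{w}^k)\|)$, so \eqref{eq:decrease-1} is satisfied with $i = 0$ for all $k$ past a threshold, closing the induction with $\bm{w}^{k+1} := \tilde{\bm{w}}^{k,0}$; shrinking $\delta_{\hat{\bm{w}}_*}$ if necessary keeps the iterate inside the ball. This proves \ref{Conclusion:C2}. For \ref{Conclusion:C3}, the telescoping bound $\|\bm{w}^{k+1} - \bm{w}^k\| = \mathcal{O}(\mathrm{dist}(\bm{w}^k, \bm{W}_*))$ combined with superlinear contraction of the distance gives a Cauchy sequence converging to some $\bm{w}_* \in \bm{W}_* \cap \overline{\mathcal{M}}_{\hat{\bm{w}}_*}$, and the distance-based superlinear rate upgrades in the standard way to $\|\bm{w}^{k+1} - \bm{w}_*\| = o(\|\bm{w}^k - \bm{w}_*\|)$. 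The main obstacle is step (ii): proving manifold identification without strict complementarity. The design of the two-sided thresholds $\theta^k/2, l^k/2, \rho^k/2$ strictly below the separation gaps $\bar{\theta}, \bar{l}, \bar{\rho}$ is precisely what makes this possible, but the case analysis---separately verifying identification for each of the three partly smooth pieces $h, \delta_{\mathcal{Q}}, \delta_{\mathcal{K}}$ and checking that the nested minimizations in \eqref{eqn:correct} pick the intended boundary element---is the most delicate part.
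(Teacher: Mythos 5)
Your proposal is correct and follows essentially the same route as the paper: a Levenberg--Marquardt-type distance estimate for the uncorrected Newton step, manifold identification via the two-sided thresholds $\theta^k,l^k,\rho^k$ lying strictly below the separation gaps $\bar{\theta},\bar{l},\bar{\rho}$, verification that \eqref{eq:decrease-1} holds with $i=0$, and an induction closed by keeping iterates in the ball. The only difference is presentational --- the paper imports the LM estimate, the superlinear distance contraction, and the ball-invariance by citing Lemmas 5--6 and Theorem 6 of \citet{hu2025analysis}, whereas you sketch those arguments directly.
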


\begin{proof}
\ref{Conclusion:C2}: By  Lemma \ref{lem:local-smooth}, we conclude that $F$ is locally smooth on $\overline{\mathcal{M}}_{\hat{\bm{w}}_*}$. Hence the local quadratic upper bound of $F$, i.e., condition \cite[Assumption 3(A1)]{hu2025analysis},  holds. For $i=0$ (ignoring the subscript $i$), let $\bar{\bm{w}}^K$ be defined in \eqref{eq:ssn-step}, $\bar{\lambda}^K $ be the  eigenvalue of $\bar{\bm{x}}^K+\sigma(\mathcal{A}^*( \bar{\bm{y}}^K) + \bar{\bm{z}}^K-\bm{c})$, and $ \hat{\lambda}^* $ be the eigenvalue of $ \hat{\bm{x}}_* + \sigma (\mathcal{A}^*(\hat{\bm{y}}_*) + \hat{\bm{z}}_* - \bm{c} ) $.  It follows from \cite[Lemma 5]{hu2025analysis} that there exists $\bar{c}$ such that $\|\bm{d}^K\| \le \bar{c} (\text{dist}(\bm{w}^K,\bm{W}_*)  + \|\bm{\eta}^K\| ) $.
Hence there exists $\delta_{ \hat{\bm{w}}_*, 1} $ such that for $ \bm{w}^K \in B(\hat{\bm{w}}_*,\delta_{ \hat{\bm{w}}_*, 1})$, we have  $\| \bar{\lambda}^K - \hat{\lambda}^* \|_{\infty} < \frac{\bar{\theta}}{2}.$ Consequently, $|\bar{\lambda}_{\alpha_1 \cup \alpha_2 }^K| > \frac{\bar{\theta}}{2} $  and $| \bar{\theta}_{ (\alpha_1 \cup \alpha_2)^c  }^K| < \frac{\bar{\theta}}{2}$. Then according to the definition of $\mathcal{P}_{\theta,l,\rho,\sigma}, \tilde{\bm{w}}^K$ and \ref{assumption:A4} in Assumption \ref{assum:local}, ${\rm rank}(\sigma(\mathcal{A}^*({ \tilde{\bm{y}}}^K)-\bm{c})+\tilde{\bm{x}}^K)  = |\alpha_1|+|\alpha_2|$.  Analogously, for the index where SC does not hold, it holds that $(\hat{\bm{u}}_*)_i \in \text{bd}(\mathcal{Q}_i) $ and   $\hat{\bm{y}}_{*,i} \in -\text{bd}(\mathcal{N}_{\mathcal{Q}_i} (\hat{\bm{u}}_{*,i}) )$.
 As a consequence, it holds that there exist $\delta_{ \hat{\bm{w}}_*, 2},\delta_{ \hat{\bm{w}}_*, 3}$ such that $( \tilde{\bm{u}}^K - \sigma \tilde{\bm{y}}^K)_{i} =  ( \hat{\bm{u}}_* - \sigma \hat{\bm{y}}_*)_{i} $, $i \notin I_{\mathcal{Q}}$ and $({ \tilde{\bm{q}}}^K - \sigma \tilde{\bm{z}}^K)_{i,j} = ( \hat{\bm{q}}_* - \sigma \hat{\bm{z}}_* )_{i,j}$, $ (i,j) \notin I_h$ for $ \bm{w}^K \in B(\hat{\bm{w}}_*,\delta_{ \hat{\bm{w}}_*, 2}) $ and $ \bm{w}^K \in B(\hat{\bm{w}}_*,\delta_{ \hat{\bm{w}}_*, 3}) $,   respectively.  Hence $\mathcal{P}_{\theta^K,l^K,\rho^K,\sigma}(\bm{w}^K + \bm{d}^K) \in  \overline{\mathcal{M}}_{\hat{\bm{w}}_*} $ holds.

Since \ref{assumption:A2} and \ref{assumption:A3} in Assumption \ref{assum:local} hold and $\tilde{\bm{w}}^K = \mathcal{P}_{\theta^K,l^K,\rho^K,\sigma}(\bm{w}^K + \bm{d}^K) \in  \overline{\mathcal{M}}_{\hat{\bm{w}}_*}$,  it follows from \cite[Lemma 6]{hu2025analysis} that
there exists $\delta_{ \hat{\bm{w}}_*, 4} $ such that for $ \bm{w}^K \in B(\hat{\bm{w}}_*,\delta_{ \hat{\bm{w}}_*, 4}) $, we have:
\begin{equation} \label{eqn:superlinear}
{\rm dist}(\mathcal{P}_{\theta^K,l^K,\rho^K,\sigma}(\bm{w}^K + \bm{d}^K),\bm{W}_*) \le \gamma_l^{-1} \|F(\mathcal{P}_{\theta^K,l^K,\rho^K,\sigma}(\bm{w}^K + \bm{d}^K) ) \| \le c_2 {\rm dist}(\bm{w}^K,\bm{W}_*)^q,
\end{equation}
where $c_2 > 0$ is the constant corresponding to $L,\gamma_l$ and $1<q<2$. According to the local error bound condition, the following inequality holds:
\begin{equation} \label{eqn:superlinear:descent}
\|F(\mathcal{P}_{\theta^K,l^K,\rho^K,\sigma}(\bm{w}^K + \bm{d}^K)) \| \le \tilde{c}_2 \gamma_{l} {\rm dist}(\bm{w}^K,\bm{W}_* )^q \le \tilde{c}_2 \gamma_l^{1-q} \| F(\bm{w}^K)\|^q.
\end{equation}
According to Theorem \ref{thm:global-con}, we have $\|F(\bm{w}^k)\| \rightarrow 0$. Consequently,
there exists $\delta_{ \hat{\bm{w}}_*, 5} $ such that for $ \bm{w}^K \in B(\hat{\bm{w}}_*,\delta_{ \hat{\bm{w}}_*, 5}) $, $\|F(\bm{w}^K)\|^{q-1} < \frac{1}{\tilde{c}_2 \gamma_{l}^{1-q} \nu}$. It follows that
\[
\|F(\mathcal{P}_{\theta^K,l^K,\rho^K,\sigma}(\bm{w}^K + \bm{d}^K)) \| < \nu \|F(\bm{w}^K)\|
\]
and hence \eqref{eq:decrease-1} holds.
Furthermore, it follows from the proof of \cite[Theorem 6]{hu2025analysis} that there exists $\delta_{ \hat{\bm{w}}_*, 6}$, $ \bm{w}^{K+1} \in B(\hat{\bm{w}}_*,\delta_{\hat{\bm{w}}_{*,6}} )$ if $\bm{w}^K \in B(\hat{\bm{w}}_*,\delta_{\hat{\bm{w}}_{*,6}} )$. Consequently,
let $\delta_{ \hat{\bm{w}}_*} = \min \{\delta_{ \hat{\bm{w}}_*, 1},\delta_{ \hat{\bm{w}}_*, 2},\delta_{ \hat{\bm{w}}_*, 3},\delta_{ \hat{\bm{w}}_*, 4},\delta_{ \hat{\bm{w}}_*, 5}, \delta_{ \hat{\bm{w}}_*, 6} \} $ and it follows that \ref{Conclusion:C2} holds by induction.

 \ref{Conclusion:C3}: By Lemma \ref{lem:local-smooth}, $F$ is smooth locally at $\hat{\bm{w}}_* \in \bm{W}_*$ on $\overline{\mathcal{M}}_{\hat{\bm{w}}_*}$. According to the proof in \ref{Conclusion:C2}, condition \cite[Assumption 3(A1)]{hu2025analysis} holds for $\bm{w}^k,\bm{w}^{k+1} ,k > K$. It then follows from \eqref{eqn:superlinear} and \cite[Theorem 6]{hu2025analysis} that $\{\bm{w}^k\}$ is a Cauchy sequence and hence converges to some $\bm{w}_{*} \in \overline{\mathcal{M}}_{\hat{\bm{w}}_*}$ superlinearly, which completes the proof of \ref{Conclusion:C3}.
\end{proof}

\begin{remark}
In Theorem \ref{thm:superlinear convergence},
\ref{Conclusion:C2} states that no line search is needed for $k > K$ and \ref{Conclusion:C3} presents the superlinear convergence results. The condition $\bm{w}^k \in \overline{\mathcal{M}}_{\hat{\bm{w}}_*}$ is used to guarantee the smoothness of $F$ and is also assumed in \cite[Lemma 6]{hu2025analysis}.
Furthermore, we prove that $\bm{w}^k \in \overline{\mathcal{M}}_{\hat{\bm{w}}_*}$ for every $k > K$.  If assumptions in Theorem \ref{thm:superlinear convergence} and the additional SC hold, then $\theta,l,\rho $ are all equal to 0, which indicates the correction map $\mathcal{P}_{\theta,l,\rho,\sigma}$ reduces to the identity operator and $F$ is smooth locally on the whole space. Consequently, Theorem \ref{thm:superlinear convergence} is a theoretical extension compared with the local convergence results in \citep{hu2025analysis,deng2023augmented}, which rely on SC condition.
\end{remark}


To gain a more concrete understanding of how restrictive SC is,
we next present the SC of problem \eqref{prob}.
\begin{proposition} \label{pro:SC}
Denote $\bm{w}_*$ as a solution. Under Assumption \ref{assum} and by
\cite[Theorem 6.2]{mordukhovich2017geometric}, SC holds at $\bm{w}_*$ if and only if
\begin{equation} \label{eqn:SC-all}
\begin{aligned}
& \bm{z}_* \in {\rm ri}(-\partial h(\bm{x}_*)),   \bm{y}_* \in {\rm ri}(-\mathcal{N}_{\mathcal{Q}}(\mathcal{A}(\bm{x}_*))),  \bm{c} - \bm{z}_* - \mathcal{A}^*\bm{y}_*  \in {\rm ri}(-\mathcal{N}_{\mathcal{K}}(\bm{x}_*)),
\end{aligned}
\end{equation}
where $\mathcal{N}_{\mathcal{C}}(\bm{x}):=\{\bm{s}|\iprod{\bm{s}}{\bm{p}} \le \iprod{\bm{s}}{\bm{x}}, \forall \bm{p} \in \mathcal{C} \}$ is the normal cone of the convex set $\mathcal{C}$.
Furthermore, for SDP, SC reduces to
\begin{equation} \label{SC:SDP}
  {\rm rank}(\bm{x}_*) + {\rm rank}(\bm{s}_*) =n, \iprod{\bm{x}_*}{\bm{s}_*} = 0.
\end{equation}
For SDP+, SC simplifies to
\begin{equation} \label{SC:SDP+}
{\rm rank}(\bm{x}_*) + {\rm rank}(\bm{s}_*) =n, \iprod{\bm{x}_*}{\bm{s}_*} =0, \iprod{\bm{z}_*}{\bm{x}_*} =0, \bm{x}_* + \bm{z}_* >0.
\end{equation}
\end{proposition}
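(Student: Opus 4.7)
The plan is to invoke the strict complementarity characterization from \cite[Theorem 6.2]{mordukhovich2017geometric}, which asserts that for a composite convex problem at a KKT point, SC holds if and only if each nonsmooth multiplier lies in the \emph{relative interior} of the corresponding subdifferential or normal cone evaluated at the primal solution. I would first rewrite the KKT system \eqref{kkt} by isolating the three nonsmooth components: from the fixed-point equations, $-\bm{s}_* \in \mathcal{N}_{\mathcal{K}}(\bm{x}_*)$, $-\bm{z}_* \in \partial h(\bm{x}_*)$, and $-\bm{y}_* \in \mathcal{N}_{\mathcal{Q}}(\mathcal{A}\bm{x}_*)$; while the first equation enforces $\bm{s}_* = \bm{c} - \bm{z}_* - \mathcal{A}^*\bm{y}_*$. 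Strengthening each of these three inclusions to the relative interior yields exactly \eqref{eqn:SC-all}. Slater's condition in Assumption \ref{assum} is precisely what Mordukhovich's theorem needs to ensure the multipliers exist and to separate the primal and dual relative-interior requirements.

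Next, specializing to the standard SDP instance of \eqref{prob} ($h\equiv 0$ and $\mathcal{Q}$ a singleton encoding an equality constraint), the two conditions involving $\partial h$ and $\mathcal{N}_{\mathcal{Q}}$ become vacuous, so only the PSD condition survives. I would then invoke the standard description of the PSD normal cone, $-\mathcal{N}_{\mathbb{S}_+^n}(\bm{x}_*) = \{\bm{s}\in\mathbb{S}_+^n : \langle \bm{x}_*,\bm{s}\rangle = 0\}$, i.e., the face of $\mathbb{S}_+^n$ complementary to $\bm{x}_*$; its relative interior consists of those $\bm{s}\succeq 0$ with $\langle \bm{x}_*,\bm{s}\rangle = 0$ and $\mathrm{rank}(\bm{s}) = n-\mathrm{rank}(\bm{x}_*)$. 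This is a routine computation using the spectral decomposition of $\bm{x}_*$: writing $\bm{s}$ in the eigenbasis of $\bm{x}_*$, the constraint $\langle \bm{x}_*,\bm{s}\rangle=0$ forces $\bm{s}$ to be supported on the kernel block, and the relative interior corresponds to the block being positive definite. Combining this rank condition with the complementarity $\langle \bm{x}_*,\bm{s}_*\rangle = 0$ already implied by \eqref{kkt} delivers \eqref{SC:SDP}.

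For SDP+, the additional data is $h(\bm{x}) = \delta_{\bm{x}\geq 0}(\bm{x})$, whose subdifferential at $\bm{x}_*$ decomposes entrywise as $\{0\}$ when $(\bm{x}_*)_{ij}>0$ and $(-\infty,0]$ when $(\bm{x}_*)_{ij}=0$. Its relative interior is obtained by replacing $(-\infty,0]$ with $(-\infty,0)$, so $-\bm{z}_*\in\mathrm{ri}(\partial h(\bm{x}_*))$ means $(\bm{z}_*)_{ij}>0$ whenever $(\bm{x}_*)_{ij}=0$ and $(\bm{z}_*)_{ij}=0$ whenever $(\bm{x}_*)_{ij}>0$. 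Together with $\bm{x}_*\geq 0$, this is precisely the entrywise positivity $\bm{x}_*+\bm{z}_*>0$ with $\langle \bm{z}_*,\bm{x}_*\rangle = 0$; appending these to the PSD conditions from the previous step yields \eqref{SC:SDP+}.

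The main obstacle is, as is typical with SC statements, careful bookkeeping of the sign conventions between the dual variables and the associated normal cones/subdifferentials as they enter Mordukhovich's theorem, together with the classical but nontrivial computation of the relative interior of the PSD normal cone; the latter requires an explicit verification via the spectral decomposition of $\bm{x}_*$ and the face-lattice description of $\mathbb{S}_+^n$.
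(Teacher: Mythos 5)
Your proposal is correct and follows essentially the same route as the paper: cite Mordukhovich's relative-interior characterization of SC to obtain \eqref{eqn:SC-all}, identify $-\mathcal{N}_{\mathcal{K}}(\bm{x}_*)$ with the face $\mathcal{K}^*\cap\{\bm{x}_*\}^{\bot}$ and compute its relative interior via the spectral decomposition of $\bm{x}_*$ (yielding the rank condition ${\rm rank}(\bm{s}_*)=n-{\rm rank}(\bm{x}_*)$), and then add the entrywise analysis of ${\rm ri}(\partial h(\bm{x}_*))$ for $h=\delta_{\bm{x}\ge 0}$ to get \eqref{SC:SDP+}. The only difference is one of detail: the paper explicitly verifies the set equality $-\mathcal{K}^*\cap\{\bm{x}_*\}^{\bot}=\mathcal{S}$ via simultaneous diagonalizability, whereas you take the face description of $\mathbb{S}_+^n$ as standard, which is a legitimate shortcut.
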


\begin{proof}
According to the definition of normal cone, we have $\mathcal{N}_{\mathcal{K}}(\bm{x}_*) =
\{\bm{s}|\iprod{\bm{s}}{\bm{p}} \le \iprod{\bm{s}}{\bm{x}_*}, \forall \bm{p} \in \mathcal{K} \}$. Through taking $\bm{p} = \frac{1}{2} \bm{x}_*$ and $2 \bm{x}_*$, it can be verified that $\iprod{\bm{s}}{\bm{x}_*} =0 $ for all $\bm{s} \in \mathcal{N}_{\mathcal{K}}(\bm{x}_*).$ As a consequence, $\mathcal{N}_{\mathcal{K}}(\bm{x}_*) = \{\bm{s}| \iprod{\bm{s}}{\bm{p}} \le 0, \iprod{\bm{x}_*}{\bm{s}} = 0 , \forall \bm{p} \in \mathcal{K}\}= - \mathcal{K}^* \cap \{\bm{x}_*\}^{\bot},$ where $\mathcal{K}^*$ denotes the dual cone of $\mathcal{K}.$  Let $\bm{x}_*  = \sum_{i=1}^r \lambda_i V_iV_i^*$ be the eigenvalue decomposition,  where $r$ is the rank of $\bm{x}_*$, $\lambda_1,\cdots,\lambda_r$ correspond to the eigenvalues and $ [V_1,\cdots,V_r] $ are the corresponding eigenvectors. Denote $[V_{r+1} ,\cdots,V_n]$ as the orthogonal basis of the orthogonal complement subspace of the space generated by $[V_1,\cdots,V_r]$.
We claim that
\begin{equation} \label{eqn:K}
-\mathcal{K}^* \cap \{\bm{x}_*\}^{\bot} = \mathcal{S} := \{\bm{s}| \bm{s} = \sum_{i=1}^{n-r} \alpha_i V_{i+r}V_{i+r}^*,\alpha \le 0 \}.
\end{equation}
We first prove that $\mathcal{S} \subseteq -\mathcal{K}^* \cap \{\bm{x}_*\}^{\bot}$. It is obvious that $\iprod{\bm{s}}{\bm{x}} =0$ since $V_i^*V_j =0$ for $i \neq j.$ According to the definition of $\mathcal{S}$, for every $-\bm{s} \in \mathcal{S}$, we have $ \bm{s} \in \mathcal{K}^*$.  Then it follows that  $\mathcal{S} \subseteq \mathcal{N}_{\mathcal{K}}(\bm{x}_*).$ For every $\bm{s} \in -\mathcal{K}^* \cap \{\bm{x}_*\}^{\bot},$ since $-\bm{s}$ is a positive semidefinite matrix and the equality $\iprod{\bm{s}}{\bm{x}_*} = 0$  is equivalent to $\bm{x}_*\bm{s} = \bm{s}\bm{x}_* =0 ,$  we have $\bm{x}_*$ and $\bm{s}$ are simultaneously diagonalizable. Hence we have that the corresponding eigenvalue $\Lambda$, $\Lambda_s$ of $\bm{x}^*$ and $\bm{s}$ satisfy $\Lambda \Lambda_s = 0$. Consequently, there exist $\alpha_1,\cdots,\alpha_{n-r}$ such that $\bm{s} = \sum_{i=1}^{n-r} \alpha_i V_{i+r}V^*_{i+r} \in \mathcal{S},$ which means $-\mathcal{K}^* \cap \{\bm{x}_*\}^{\bot} = \mathcal{S}.$ Since the affine hull of $\mathcal{S}$ is $\bar{\mathcal{S}}:= \{\bm{s}| \bm{s} =  \sum_{i=1}^{n-r} \alpha_i V_{i+r}V^*_{i+r} \}$, it follows that $ -\text{ri} (\mathcal{N}_{\mathcal{K}}(\bm{x}_*) ) = \{\bm{s}| \bm{s} = \sum_{i=1}^{n-r}\alpha_i V_{i+r}V^*_{i+r},\alpha > 0\},$ under which we can give examples where SC holds for concrete examples.

For the classical SDP problems, SC boils down to the fact that there exists $\bm{s}_* = \bm{c} - \mathcal{A}^*\bm{y}_*,$ such that $\bm{s}_* \in \text{ri}(\mathcal{K}^* \cap \{\bm{x}_*\}^{\bot})$ \cite[Proposition 2.44]{rockafellar2009variational}.
 Hence the SC holds for SDP if and only if \eqref{SC:SDP} holds.
For SDP+ problem, since $\mathcal{Q} = \{  \bm{b}\},$  and $h(\bm{x}) = \delta_{\bm{x} \ge 0}(\bm{x})$,  the SC boils down to the fact that there exists $\bm{z}_* \in -{\rm ri}(\mathcal{N}_{\{\bm{x} \ge 0\}}(\bm{x}_*) ), \bm{s}_*= \bm{c} -\mathcal{A}^*(\bm{y}_*) -  \bm{z}_*,  \bm{s}_* \in \text{ri}(\mathcal{K}^* \cap \{\bm{x}_*\}^{\bot}).$  Analogously, the SC holds for SDP+ problem if and only if \eqref{SC:SDP+} holds.
\end{proof}


\subsection{Complexity analysis}
We now present the iteration complexity of Algorithm \ref{alg:ssn} by assuming that parameters $\{\varsigma_k\}$ and $\{\|\bm{\eta}^k\|\}$ are set properly.
\begin{theorem} \label{com-ana}
Suppose that Assumption \ref{assum} holds. Let $\{\bm{w}^k\}$ be the sequence generated by Algorithm \ref{alg:ssn} with  parameters $\varsigma_k $ and $ \bm{\eta}^k  $ such that $ \varsigma_k  \le  Ck^{-\frac{5}{3}},  \|\bm{\eta}^k\|^2 \le Ck^{-2}$ for all $k$, where $C > 0$ is a  constant. Then for a given accuracy $\varepsilon > 0$,
it holds that for at most $K = \widetilde{\mathcal{O}}(\varepsilon^{-3/2} )$ iterations, we have
\begin{equation} \label{complexity}
 \|F(\bm{w}^K) \|^2 \le \varepsilon.
\end{equation}
Furthermore, let $\varepsilon_0$ be the threshold such that \eqref{eqn:superlinear} holds and $q$ be the exponent of the superlinear convergence.  If the Assumption in Theorem \ref{thm:superlinear convergence} holds, \eqref{complexity} is satisfied after at most $K = \widetilde{\mathcal{O}}(\varepsilon_0^{-3/2}) + \mathcal{O}( \log_{q}({\log(\frac{\varepsilon_0}{\varepsilon})} )) $ iterations.
\end{theorem}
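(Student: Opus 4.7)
The plan is to quantify, with explicit dependence on $k$, the recursions that were used only for asymptotic convergence in Lemma \ref{lem:glo} and Theorem \ref{thm:global-con}. The first complexity bound will follow from tracking the three cases identified in the proof of Theorem \ref{thm:global-con} with $\beta$ fixed slightly above $1/3$. The second assertion is an immediate consequence of local superlinear convergence: once the iterates enter the fast regime, the classical observation that iterating $r_{k+1} \le c\,r_k^q$ with $q > 1$ takes only doubly-logarithmic many steps yields the claim.

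For the first part, I would start from the recursion \eqref{eq:residual-increase}, which for any $\beta > 1/3$ makes $\prod_k(1 + \tilde L/k^{3\beta})$ convergent and hence gives a uniform bound on $\{\|F(\bm{w}^k)\|^2\}$. In case (i) of the proof of Theorem \ref{thm:global-con}, the telescoping identity \eqref{eqn:complexity-first} yields $\sum_{k=1}^K \tau_{k,i}^{-1}\|F(\bm{w}^k)\|^2 \le M_0$ for some $M_0$ independent of $K$. Since $\tau_{k,i} = \kappa_1 k^\beta$ and $\sum_{k=1}^K k^{-\beta} \ge K^{1-\beta}/(1-\beta)$, a pigeonhole argument gives
\[
\min_{1 \le k \le K}\|F(\bm{w}^k)\|^2 \le \frac{\kappa_1 (1-\beta) M_0}{K^{1-\beta}}.
\]
In case (ii), where \eqref{eq:decrease-1} eventually holds at every step, the nonmonotone contraction gives geometric-like decay up to $\varsigma_k^2 = \mathcal{O}(k^{-10/3})$ perturbations, which contribute only lower-order terms. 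Case (iii)---the mixed case---uses the auxiliary sequence $c_n$ of \eqref{eq:cn} together with the summability estimate \eqref{eq:an}: tracking $c_n \to 0$ at rate $n^{-(3\beta-1)/2}$ up to logarithms, one then translates this to a bound on $\|F(\bm{w}^k)\|$ via the residual-increment relation \eqref{eq:increment-res}. Taking $\beta$ arbitrarily close to $1/3$ and absorbing the logarithmic factors arising from $\prod(1+\tilde L/k^{3\beta})$ and from $\sum k^{-\beta}$ into $\widetilde{\mathcal{O}}$ produces an iteration count of $\widetilde{\mathcal{O}}(\varepsilon^{-1/(1-\beta)}) = \widetilde{\mathcal{O}}(\varepsilon^{-3/2})$.

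For the second assertion, the first part produces an index $K_0 = \widetilde{\mathcal{O}}(\varepsilon_0^{-3/2})$ at which $\|F(\bm{w}^{K_0})\|^2 \le \varepsilon_0$; under the hypotheses of Theorem \ref{thm:superlinear convergence}, $\bm{w}^{K_0}$ lies in a neighborhood where \eqref{eqn:superlinear:descent} applies at every subsequent iteration. Unrolling $\|F(\bm{w}^{K_0+n})\| \le \tilde c\,\|F(\bm{w}^{K_0+n-1})\|^q$ gives
\[
\|F(\bm{w}^{K_0+n})\| \le \tilde c^{\,(q^n-1)/(q-1)}\,\|F(\bm{w}^{K_0})\|^{q^n},
\]
so enforcing $\|F(\bm{w}^{K_0+n})\|^2 \le \varepsilon$ reduces to $q^n \gtrsim \log(\varepsilon_0/\varepsilon)$ up to a constant, which is $n = \mathcal{O}(\log_q\log(\varepsilon_0/\varepsilon))$; adding this to $K_0$ yields the stated total.

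The main obstacle is case (iii) of Theorem \ref{thm:global-con}: steps of type \eqref{eq:decrease-1} and \eqref{eq:decrease-2} interleave, so neither an explicit geometric-like decay nor the telescoping $\sum\tau^{-1}\|F\|^2$ bound applies directly. One must carefully control the growth of $\|F(\bm{w}^k)\|$ between two consecutive explicit-descent indices $k_n, k_{n+1}$ via \eqref{eq:increment-res}, then invoke the summability of $\{c_n^2\}$ from \eqref{eq:an} to obtain a quantitative rate for the subsequence $\{\|F(\bm{w}^{k_n+1})\|\}$. The length-$\zeta$ nonmonotone window introduces an extra bookkeeping layer, but all of the resulting overhead is logarithmic in $\varepsilon$ and is absorbed into $\widetilde{\mathcal{O}}$.
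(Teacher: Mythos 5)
Your overall architecture mirrors the paper's: quantify the three cases from the proof of Theorem \ref{thm:global-con} for the first bound, and unroll the superlinear recursion \eqref{eqn:superlinear:descent} for the second. Your case (i) pigeonhole via \eqref{eqn:complexity-first} and your doubly-logarithmic tail argument are both correct and essentially identical to the paper's. The gap is in case (iii), which you yourself flag as the main obstacle. You propose to track $c_n \to 0$ ``at rate $n^{-(3\beta-1)/2}$'' from the summability estimate \eqref{eq:an}. That rate is what you get if you treat the multiplicative error of a first-order step additively, i.e.\ if the perturbation feeding into \eqref{eq:cn} is $\pi_j \sim \sqrt{\sum_{k\ge k_j} M k^{-3\beta}} \sim k_j^{-(3\beta-1)/2}$ as in Theorem \ref{thm:global-con}. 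But then reaching $\|F(\bm{w}^k)\|^2 \le \varepsilon$ along the explicit-descent subsequence requires roughly $\varepsilon^{-1/(3\beta-1)}$ steps, which blows up exactly as $\beta \downarrow 1/3$ --- the regime you need for your case-(i) exponent $1/(1-\beta)$ to equal $3/2$. Balancing the two competing exponents over $\beta$ gives at best $\varepsilon^{-2}$, not $\varepsilon^{-3/2}$, so your case-(iii) sketch does not deliver the claimed complexity.

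The paper's proof avoids this by never converting the multiplicative factor $\bigl(1 + 4\kappa_1^2 L^2 k^{-3\beta}\bigr)$ into an additive $M k^{-3\beta}$ term in the complexity analysis. It chooses $K$ so large that $C := \sqrt{\prod_{j\ge K}\bigl(1 + 4\kappa_1^2 L^2 j^{-3\beta}\bigr)}$ satisfies $\nu C \le \bar{\nu} < 1$; the recursion for $c_n$ then contracts geometrically with factor $\bar{\nu}$, so only $\mathcal{O}(\log(1/\varepsilon))$ explicit-descent blocks are needed. The only genuinely additive perturbations are $\tfrac{2}{\tau_{k,i}-1}\|\bm{\eta}^k\|^2 \lesssim k^{-7/3}$ and $\varsigma_k \lesssim k^{-5/3}$ --- this is precisely where the hypotheses $\|\bm{\eta}^k\|^2 \le Ck^{-2}$ and $\varsigma_k \le Ck^{-5/3}$ enter, which your case-(iii) argument never uses --- and their tails fall below $\mathcal{O}(\varepsilon)$ after $k \gtrsim \varepsilon^{-3/2}$ iterations. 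Finally, each gap $k_{n+1}-k_n$ between explicit-descent indices must be bounded by $\mathcal{O}(\varepsilon^{-3/2})$ via \eqref{eqn:complexity-first}, a step your sketch mentions as available but does not carry out. Assembling these three ingredients yields $\mathcal{O}(\varepsilon^{-3/2}\log(1/\varepsilon)) = \widetilde{\mathcal{O}}(\varepsilon^{-3/2})$; without the geometric contraction of $c_n$ and the refined additive perturbation, the bound does not close.
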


\begin{proof}
We first consider the case where \eqref{eq:decrease-2} happens with finite times. It follows from \eqref{eqn:complexity-first} that \eqref{complexity} holds with at most $\mathcal{O} (\varepsilon^{-3/2} )$ iterations in this case.
The worst complexity of Algorithm \ref{alg:ssn} occurs in the case where the update of $\bm{w}^{k+1}$ associated with \eqref{eq:decrease-1} and \eqref{eq:decrease-2} both happen infinitely many times. Since $\sum_{j=1}^\infty \frac{4\kappa_1^2 L^2}{j^{3\beta}} < +\infty$ , there exists a large enough $K$ independent of $\varepsilon$ and a constant $\bar{\nu} \in (\nu ,1)$ such that
$C := \sqrt{\Pi_{j=K}^\infty \left(1 + \frac{4\kappa_1^2 L^2}{k^{3\beta}} \right)} \le \sqrt{ \exp(\sum_{j=K}^\infty \frac{4\kappa_1^2 L^2}{j^{3\beta}} )} < \frac{\bar{\nu}}{\nu}$.
Let $k_1,\cdots,k_n$ be the indices corresponding to the steps associated with \eqref{eq:decrease-1} is satisfied. Since for any $k > 0$,
$
\sum_{j = k +1}^{\infty} C{j^{-\frac{5}{3}}} \le C \int_{k}^{\infty} {x^{-\frac{5}{3}}} dx = C {k^{-\frac{2}{3}}}
$ and $ \sum_{j = k +1}^{\infty} C{j^{-\frac{7}{3} }} \le C \int_{k}^{\infty} x^{-7/3 } dx = C k^{-4/3}$.
Hence we need at most $k = \mathcal{O}(\varepsilon^{-3/2}) $ iterations
 such that $\sqrt{\sum_{j=k}^{\infty} \frac{4C}{l^{7/3}}} \le (b_2 - b_1)\varepsilon/[\zeta^2(1-\bar{\nu})C] $  and $ \sum_{j = k}^\infty \varsigma_j \le \sum_{j = k +1}^\infty \frac{C}{j^{5/3} } \le b_1 \varepsilon/[2\zeta^2(1-\bar{\nu})]$, where $b_1 \in (0,1), b_2 \in (b_1 ,1)$ are two constants.

Let $n_0$ be large enough such that $\bar{\pi}_{n_0} := \sum_{j=k_{n_0}}^{\infty}  \frac{C }{j^{7/3}} \le (b_2- b_1)\varepsilon/[2(1-\bar{\nu})\zeta^2]$. Subsequently, let $n$ be large enough that $n \ge n_0 + \zeta $ and $k_n \ge K \ge 4$.
According to the proof of Lemma \ref{lem:glo} and  \eqref{eq:residual-increase}, for any $k_n  \le j < k_{n+1}$,
    \bee \begin{aligned}
    & \|F(\bm{w}^{j+1})\|  \leq \sqrt{ (1+ \frac{4\kappa_1^2 L^2}{j^{3\beta}}) \|F(\bm{w}^{j})\|^2 + \frac{2}{\tau_{k,i} -1} \| \bm{\eta}^k \|^2 } \\
    & \leq \sqrt{\Pi_{k=k_n}^{j} (1 +   \frac{4 \kappa_1^2 L^2}{k^{3\beta}} )} \|F(\bm{w}^{k_{n}})\| +  \sqrt{\Pi_{k=k_n}^j (1 + \frac{4\kappa_1^2 L^2}{k^{3\beta}}) \sum_{l=k_n}^j \frac{4C }{l^{7/3 } }} \\
    & \le C \|F(\bm{w}^{k_{n}})\| + C \sqrt{\sum_{l = k_n}^j \frac{4C  }{l^{7/3 }}}.
    \end{aligned}
    \eee
Since $\nu C \le \bar{\nu}$, it follows from the definition of $C$ and $\bar{\pi}_j$ that
    \be \label{eqn:complex2}
    \begin{aligned}
    \| F(\bm{w}^{k_{n+1}}) \| & \leq \nu \max_{\max(1, n-\zeta + 1) \le j \le n}  \left( C\|F(\bm{w}^{k_{j}})\| + \bar{\pi}_j + \varsigma_{k_j} \right) \\
    & \le  \bar{\nu} a_n + \nu \max_{ \max(1, n- \zeta +1) \leq j \leq n} ( \varsigma_{k_j} + \bar{\pi}_{j}).
    \end{aligned}
    \ee
Subsequently, by the definition of $c_n$ in Theorem \ref{thm:global-con}, we have
\bee
\begin{aligned}
c_{n+1} & \overset{\eqref{eqn:complex2}}{\le} \bar{\nu} c_n + \nu \sum_{l=n \zeta}^{(n+1)\zeta - 1} \max_{ \max(1, l- \zeta +1) \leq j \leq l} (\bar{\pi}_j + \varsigma_{k_j})  \le \bar{\nu} c_n + \bar{\nu} \zeta^2  \bar{\pi}_{n \zeta } + \bar{\nu} \zeta^2 \sum_{j = k_{\zeta(n-1) }}^{k_{\zeta(n+1)}} \varsigma_{j}  \\
 &\le \bar{\nu}^{n-n_0} c_{n_0} + \zeta^2 \sum_{j=k_{n_0}}^{k_{\zeta(n+1)}}\bar{\nu}^{k-j+1}\bar{\pi}_{j}  + 2 \zeta^2 \sum_{j=k_
 {n_0}}^{k_{\zeta( n+1)}} \bar{\nu}^{k-j} \varsigma_{k_j} \\
& \le \bar{\nu}^{n-n_0} c_{n_0} + (b_2 - b_1) \varepsilon + b_1 \varepsilon = \bar{\nu}^{n-n_0} c_{n_0} + b_2 \varepsilon,
\end{aligned}
\eee
where the last equality follows from the definition of $\bar{\pi}_j$ and the assumption of $\varsigma_j$.
This implies that we need at most $\mathcal{O}(\log (\frac{1}{\varepsilon}))$ iterations of \eqref{eq:decrease-2} to make $\|F(\bm{w}^{k_n})\|^2 \le \varepsilon.$
By \eqref{eqn:complexity-first}, we have $k_{n+1} - k_n \le \mathcal{O}(\varepsilon^{-3/2})$.
Therefore,  the overall complexity is $\mathcal{O}(\varepsilon^{-3/2} \log(\frac{1}{\varepsilon})) +  \mathcal{O}(\varepsilon^{-3/2})$, dubbed as $\widetilde{\mathcal{O}}(\varepsilon^{-3/2} )$.

Furthermore, if the Assumption in Theorem \ref{thm:superlinear convergence} holds, it follows from \eqref{eqn:superlinear:descent} that there exists $K_1 > 0$ such that
$\|F(\bm{w}^{k+1}) \|^2 \le \tilde{c}_2^2 \gamma_{l}^{2- 2q} \|F(\bm{w}^k) \|^{2q}$,
where $q \in (1,2]$ for $k > K_1$. Hence  \eqref{complexity} holds with at most $\mathcal{O}( \text{log}_{q}({\log(\frac{\varepsilon_0}{\varepsilon})})$) iterations. The proof is completed.
\end{proof}

\section{Numerical experiments} \label{sec5}
To evaluate the effectiveness of SSNCP, we conduct extensive numerical experiments and compare it  with state-of-the-art solvers SDPNAL+ \citep{sun2020sdpnal+} and MOSEK \citep{MOSEK} comprehensively. We implement SSNCP using MATLAB R2023b. All experiments are performed on a Linux server with a sixteen-core Intel Xeon Gold 6326 CPU and 256 GB memory.

\subsection{Experiments setting}  \label{5-2}\label{parameter setting}
Consistent with the metrics commonly used to test for SDP problems \citep{yang2015sdpnal}, we evaluate the performance of SSNCP for SDP via the following quantity:
\[
\eta_1 = \max \{ \eta_p, \eta_d,\eta_{\mathcal{K}},\eta_{\mathcal{K}^*},\eta_{C_1} \},
\]
where $\bm{s} = \Pi_{\mathcal{K}}(\bm{c} - \mathcal{A}^*(\bm{y}) -\bm{z} - \bm{x} )$,
\begin{equation} \label{quantity}
    \begin{aligned}
    \eta_p & = \frac{ \|\mathcal{A}(\bm{x}) - \bm{b}\|}{1+\|\bm{b}\|}, \eta_d  = \frac{ \|\mathcal{A}^*(\bm{y}) + \bm{z}+\bm{s}- \bm{c}\|}{1+\|\bm{c}\|},\\
     \eta_{\mathcal{K}}&=  \frac{  \|\bm{x} - \Pi_{\mathcal{K}}(\bm{x})\|}{1+\|\bm{x}\|}, \eta_{\mathcal{K}^*}=  \frac{  \|\bm{s} - \Pi_{\mathcal{K}^*}(\bm{s})\|}{1+\|\bm{s}\|} ,\eta_{C_1} = \frac{| \iprod{\bm{x}}{\bm{s}}|}{1+\|\bm{x}\| + \|\bm{s}\|}.
    \end{aligned}
\end{equation}
 For SDP+ problems, we evaluate the performance of the tested algorithms via the following quantity
\[
\eta_2 = \max\{\eta_p,\eta_d,\eta_{\mathcal{K}}, \eta_{\mathcal{K}^*} ,\eta_{C_1}, \eta_{\mathcal{P}} , \eta_{C_2} \},
\]
where $\eta_p,\eta_d,\eta_{\mathcal{K}},\eta_{\mathcal{K}^*},\eta_{C_1}$ are defined in \eqref{quantity}, $\eta_{\mathcal{P}} = \frac{\| \Pi_{\mathcal{P}^*}(-\bm{x}) \|}{1+\|\bm{x}\|}$, and $\eta_{C_2} = \frac{|\iprod{\bm{x}}{\bm{z}}|}{1+\|\bm{x}\| +\|\bm{z}\| }.$  We stop SSNCP for SDP when $\eta_1 < 10^{-6}$ and SDP+ when $\eta_2 < 10^{-6}.$ We note that $\bm{z} \equiv 0$ in \eqref{quantity} for SDP problems. Furthermore, we also compute the relative gap
$\eta_{g} := \frac{| {\rm obj}_P -  {\rm obj}_D |}{1 + {\rm obj}_P  + {\rm obj}_D}$, where ${\rm obj}_P$ and ${\rm obj}_D$ denote the primal and dual objective function value.


\subsection{Numerical validation of theoretical results} \label{num-ver}

In this subsection, we verify the effectiveness of the correction step in Algorithm \ref{alg:ssn} for problems without SC. For SDP case, we generate the solution $\bm{x}_*$ and $\bm{s}_*$ to be the diagonal matrices with   ${\rm rank}(\bm{x}_*) + {\rm rank}(\bm{s}_*) < n$.
 We next set $\bm{b} = \mathcal{A}\bm{x}_*, \bm{c} = \mathcal{A}^*\bm{y}_* + \bm{s}_*$ where $\mathcal{A}$ and  $\bm{y}_*$ are random sparse matrix and vector, respectively.
 For the SDP+ case, the solution $\bm{x}_*$ and $\bm{s}_*$ are generated in the same way as in the SDP problem and $\bm{z}_*$ is a random nonnegative matrix with diagonal elements being 0. We also set $\bm{b}= \mathcal{A}\bm{x}_* $, $\bm{c} = \mathcal{A}^*\bm{y}_*+\bm{s}_* + \bm{z}_*.$ According to \eqref{SC:SDP}, \eqref{SC:SDP+} and the setting of the problems, SC does not hold for these tested problems.

In the following examples for SDP and SDP+ problems, we specify the parameter choices used in the algorithm. For SDP problems, the projection depends only on \(\theta^k\), with \(\bar{\theta} = 1\) determined by the choices of diagonal
matrices $\bm{x}_*$ and $\bm{s}_*$. We set \(\theta^k = 0.05\) when \(\|F(\bm{w}^k)\| < 10^{-3}\), and \(\theta^k = 0\) otherwise, so that the projection reduces to the identity in early iterations.
For SDP+ problems, the projection depends on both \(\theta^k\) and \(\rho^k\), with \(\bar{\theta} = \bar{\rho} = 1\) determined by $\bm{x}_*, \bm{s}_*$ and $\bm{z}_*$. We set \(\theta^k = \rho^k = 0.05\) when \(\|F(\bm{w}^k)\| < 10^{-3}\), and $0$ otherwise. In both cases, we set \( \|\bm{\eta}^k\| = 10^{-12} \) for all \(k\), ensuring that Assumptions~\ref{assumption:A3} and~\ref{assumption:A4} are satisfied. Algorithm~\ref{alg:ssn} terminates when either \(\eta_1 < 10^{-12}\) or \(\eta_2 < 10^{-12}\).

The numerical results are presented in Figure \ref{fig-combined-SDP}. We conjecture from the last column that the local error bound condition \ref{assumption:A2} holds, namely, $\text{dist}(\bm{w},\bm{w}_*) \le \|\bm{w}-\bm{w}_*\| < \|F(\bm{w}^k)\|$ when $\bm{w}^k$ is close to $\bm{w}_*$. This suggests that all conditions stated in Assumption~\ref{assum} are satisfied. It is shown that the iterate sequence without a correction step exhibits oscillations, while the iteration points with a correction step avoid this problem. Furthermore, the superlinear convergence rate is observed, which verifies the conclusion of Theorem \ref{thm:superlinear convergence}.

\begin{figure}[h]
    \centering
    \subfigure{\includegraphics[width=0.24\textwidth]{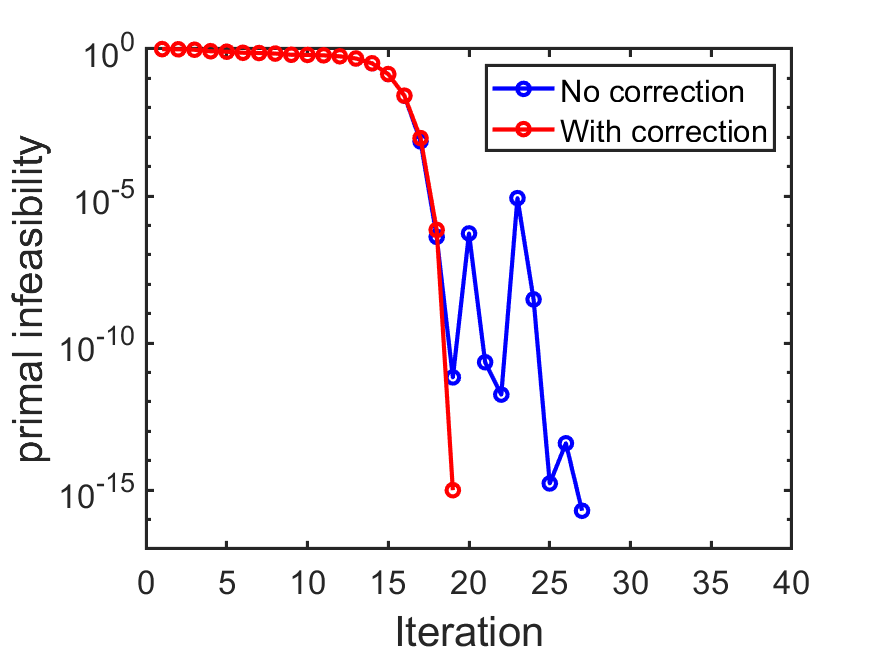}}
    \hfill
    \subfigure{\includegraphics[width=0.24\textwidth]{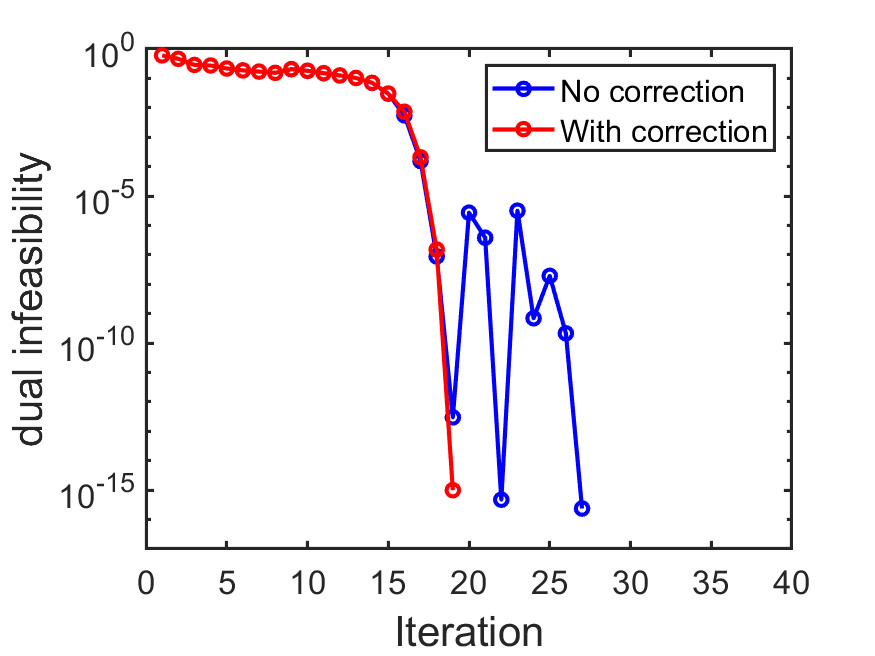}}
    \hfill
    \subfigure{\includegraphics[width=0.24\textwidth]{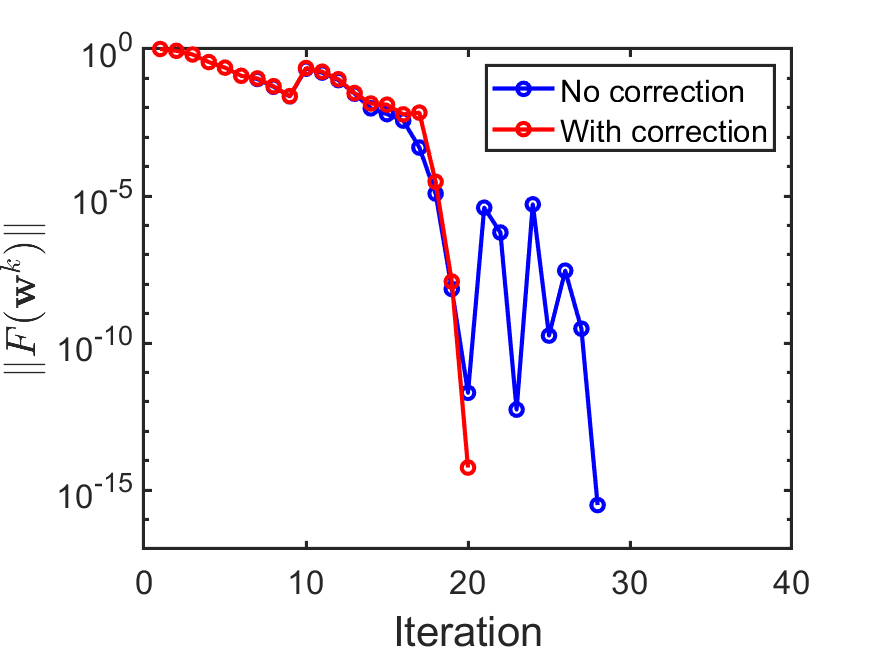}}
    \hfill
    \subfigure{\includegraphics[width=0.24\textwidth]{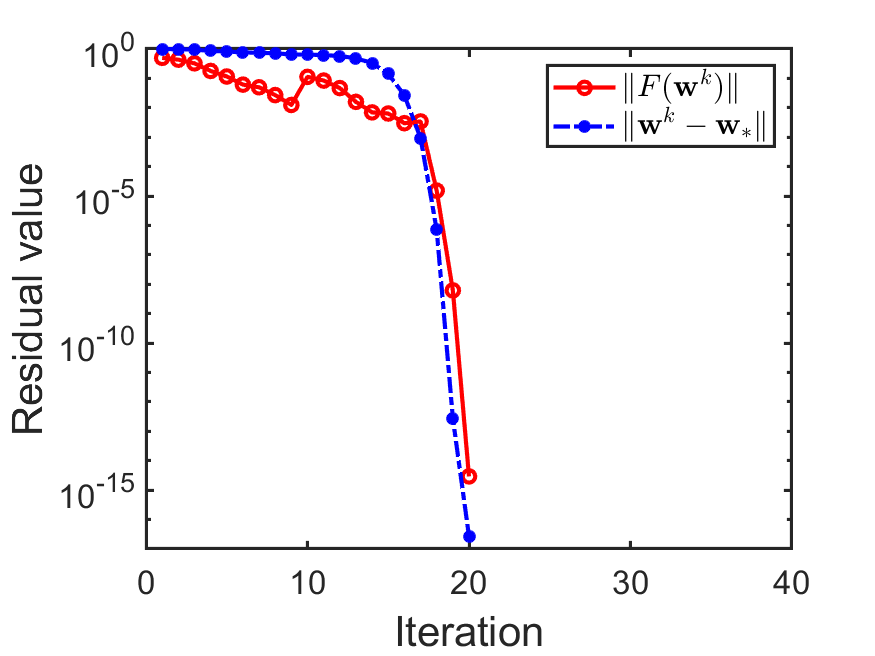}}
    \subfigure{\includegraphics[width=0.24\textwidth]{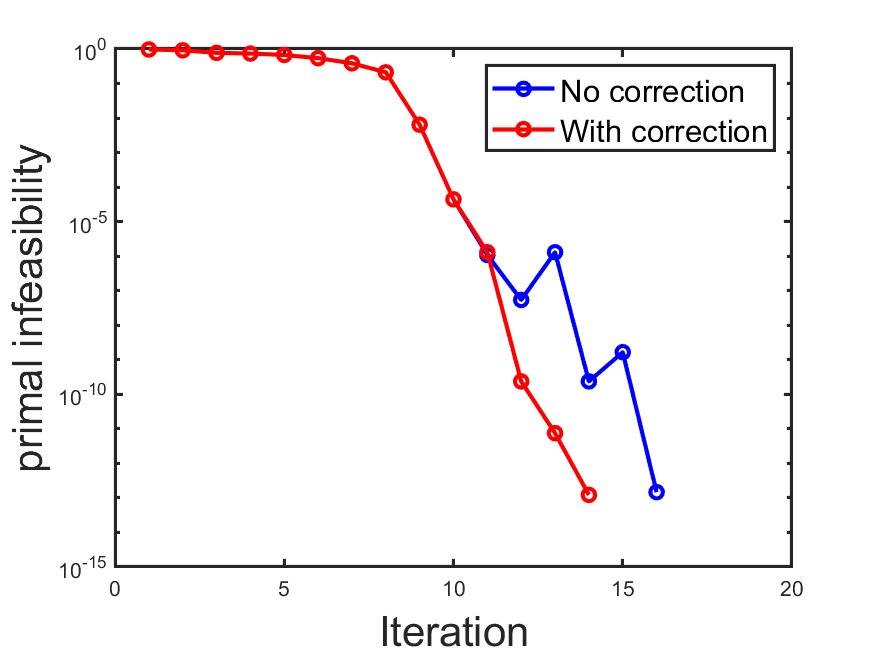}}
    \hfill
    \subfigure{\includegraphics[width=0.24\textwidth]{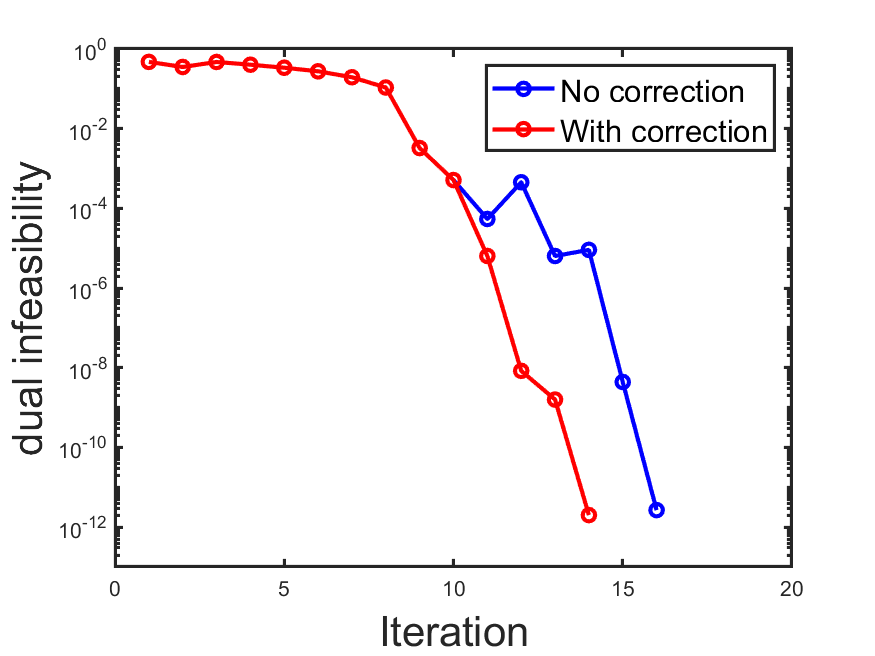}}
    \hfill
    \subfigure{\includegraphics[width=0.24\textwidth]{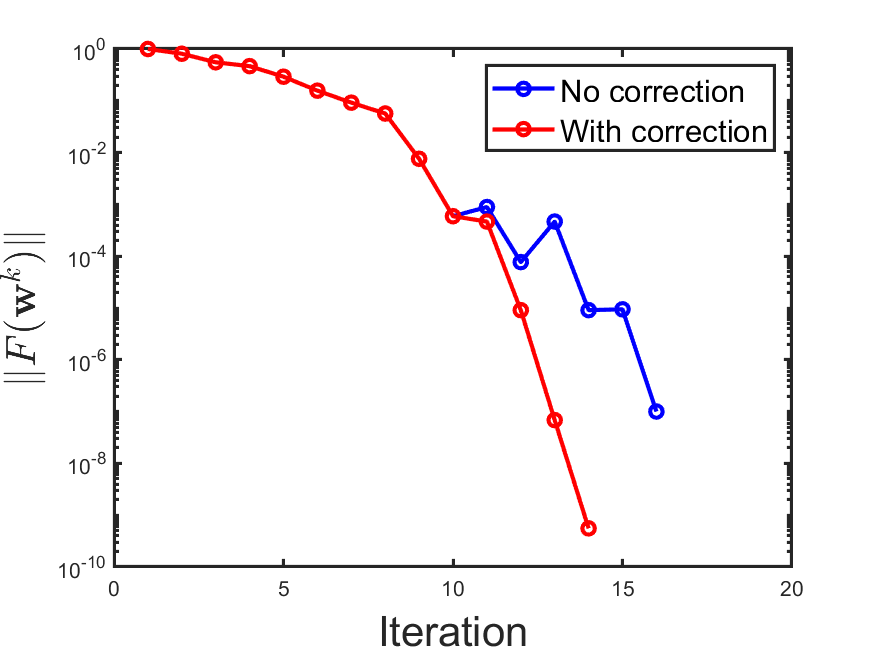}}
    \hfill
    \subfigure{\includegraphics[width=0.24\textwidth]{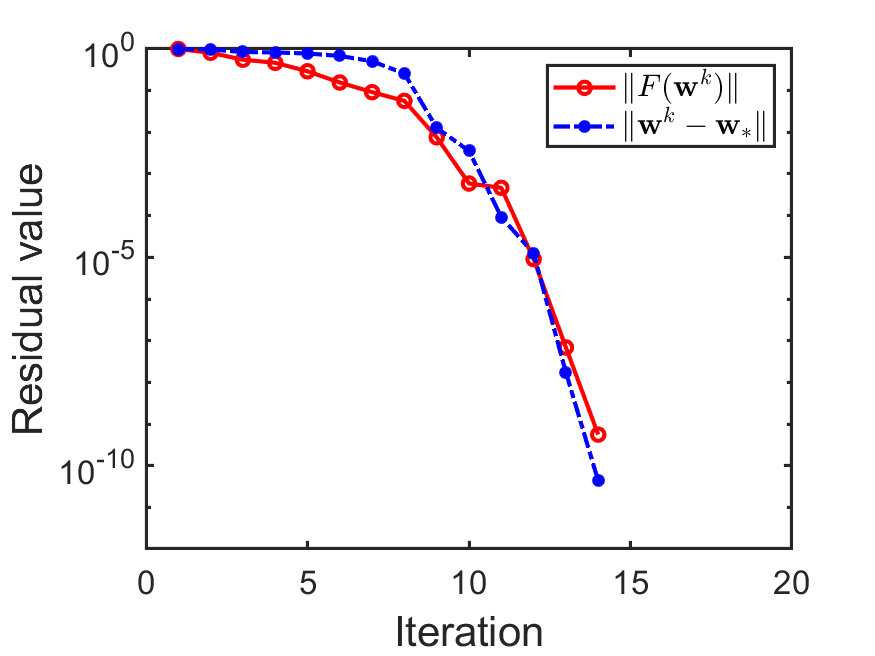}}
    \caption{
        The first three columns: the performance of SSNCP for SDP (top row) and SDP+ (bottom row) problems
        with or without the correction step. The last column: $\|F(\bm{w}^k)\|$ and $\|\bm{w}^k - \bm{w}_*\| $ of SSNCP with correction step.
    }
    \label{fig-combined-SDP}
\end{figure}

\subsection{Lov{\'a}sz Theta problem}

Let $G=(V, E)$ be a simple, undirected graph. The Lov{\'a}sz theta problem \citep{sloane2005challenge,trick1992second} is defined as
\bee
      \theta(G) = \min_X \; \iprod{-ee^T}{X}, \text{ s.t. }  \tr(X) = 1, \; X \succeq 0,\;X_{ij} = 0,\; (i, j) \in E,
\eee
which is a SDP problem. The dimension $n$ of the tested problems varies from 64 to 4096 and the number of constraints $m$ varies from 513 to 504452. The numerical results are listed in Figure \ref{fig:theta-perf} and Table \ref{tab:theta-stat}. As shown in Figure \ref{fig:theta-perf}, compared with MOSEK and SDPNAL+, SSNCP successfully solve all 69 problems while MOSEK can only solve 73.9$\%$ of them due to the limited memory. The ``error" means the comparison of residual error $\eta_1$. According to Table \ref{tab:theta-stat},  SSNCP is faster than SDPNAL+ and MOSEK on 71.0 \% of the 69 tested problems. Particularly, for large-scale problems such as \texttt{1zc.4096} and \texttt{2dc.2048}, where $(m,n) = (92161, 4096)$ and $(504452,2048)$, respectively, SSNCP is two times faster than SDPNAL+ while MOSEK can not solve them due to the lack of memory.

\begin{figure}[h]
\centering
\subfigure[error]{
\includegraphics[width=0.45\textwidth]{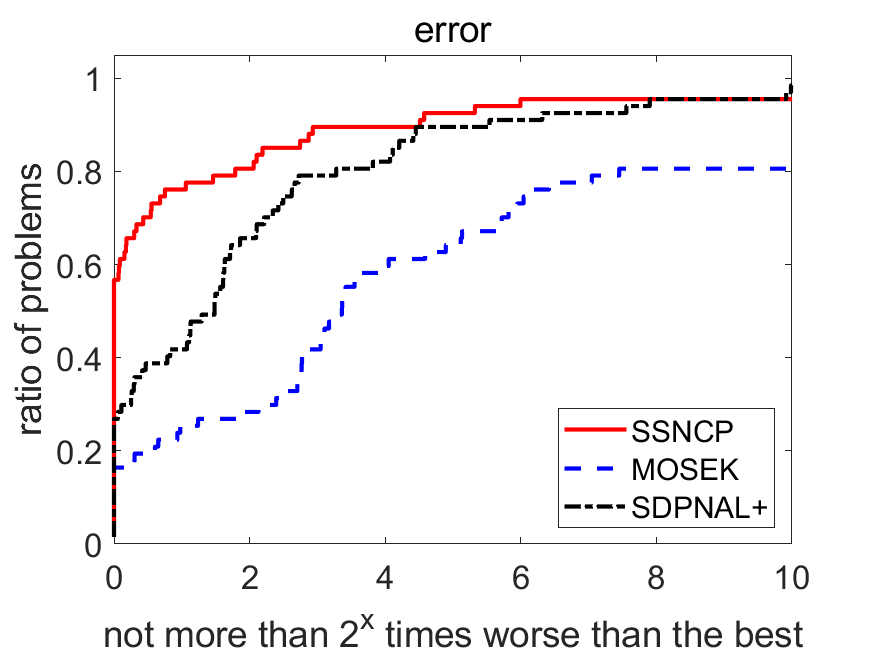}}
\subfigure[CPU]{
\includegraphics[width=0.45\textwidth]{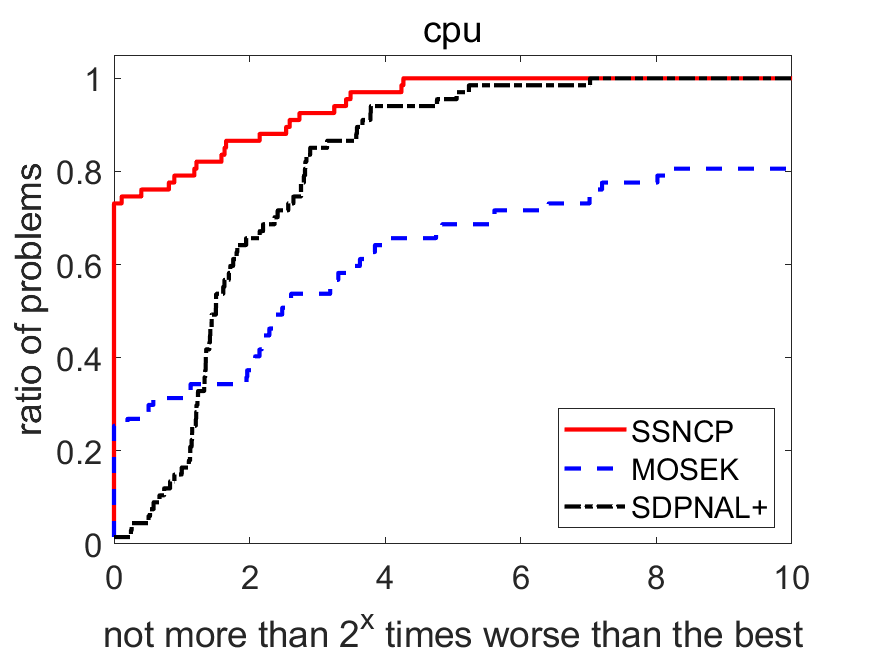}}
\caption{The performance profiles of tested algorithms for Theta problems}\label{fig:theta-perf}
\end{figure}

\begin{table}[h]
\centering
\setlength{\tabcolsep}{5pt}
\caption{A statistic of computational results of tested algorithms for Theta problems}\label{tab:theta-stat}
\begin{tabular}{|c|cc|cc|cc|}
\cline{1-7}
& \multicolumn{2}{c|}{SSNCP}
& \multicolumn{2}{c|}{SDPNAL+}
& \multicolumn{2}{c|}{MOSEK}
\\
\cline{1-7}
success & 69 &  100.0\% & 68 & 98.6\% & 51 & 73.9\%  \\
fastest & 49 &  71.0\% & 1 & 1.4\% &19 &  27.5\%\\
fastest under success & 49 &  71.0\% & 1 & 1.5\% & 17 &  33.3\% \\
not slower 3 times & 69 &  100.0\% & 36 & 52.2\% & 25 & 36.2\% \\
not slower 3 times under success & 69 &  100.0\% & 46 & 67.6\% & 23 & 45.1\%  \\
\cline{1-7}
\end{tabular}
\end{table}

\subsection{Lov{\'a}sz Theta+ problem}
Given a graph $G$ with the edge set $\mathcal{E},$ the SDP+ relaxation of the maximum stable set problem \citep{sloane2005challenge,trick1992second} is given by:
\bee
\theta_+(G) = \min\{\iprod{-ee^{\mathrm{T}}}{X}: \tr(X) = 1, \; X \succeq 0,\;X_{ij} = 0,\; (i, j) \in E, X \in \mathcal{P} \},
\eee
where the polyhedral cone $\mathcal{P} = \{X \in \mathbb{S}^n | X \ge 0 \}.$  The numerical results are listed in Figure \ref{fig:theta-plus-perf} and Table \ref{tab:thetaplus-stat},  where ``error" means the comparison of residual error $\eta_2$.
It is shown in Figure \ref{fig:theta-plus-perf} that SSNCP and SDPNAL+ can solve all the problems successfully while MOSEK cannot due to the lack of memory. In summary, SSNCP achieves higher accuracy than SDPNAL+ and MOSEK on most problems.  According to the recorded outputs of all algorithms, SSNCP is faster than SDPNAL+ and MOSEK on 72.5\% of the 69 problems.

\begin{figure}[h]
\centering
\subfigure[error]{
\includegraphics[width=0.45\textwidth]{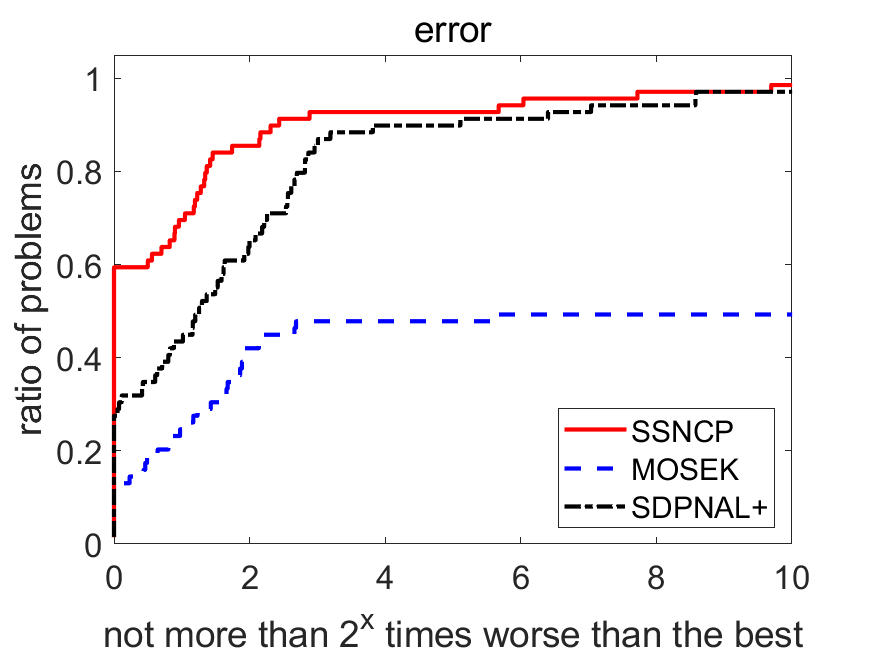}}
\subfigure[CPU]{
\includegraphics[width=0.45\textwidth]{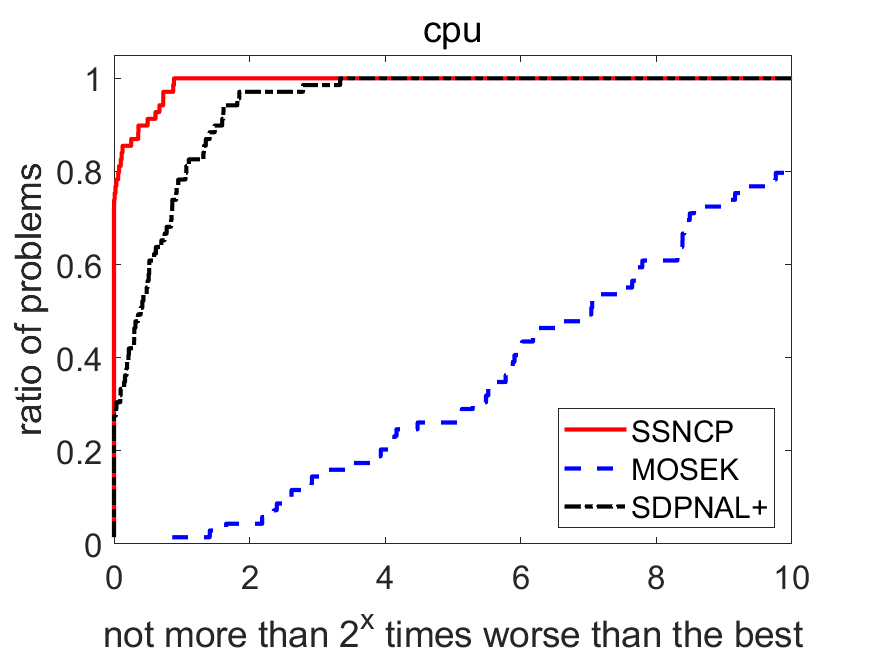}}
\caption{The performance profiles of tested algorithms for Theta+ problems}\label{fig:theta-plus-perf}
\end{figure}

\begin{table}[htbp]
\centering
\setlength{\tabcolsep}{5pt}
\caption{A statistic of computational results of tested algorithms for Theta+ problems}\label{tab:thetaplus-stat}
\begin{tabular}{|c|cc|cc|cc|}
\cline{1-7}
& \multicolumn{2}{c|}{SSNCP}
& \multicolumn{2}{c|}{SDPNAL+}
& \multicolumn{2}{c|}{MOSEK}
\\
\cline{1-7}
success & 69 &  100.0\% & 69 & 100.0\% & 13 & 18.8\%  \\
fastest & 50 &  72.5\% & 19 & 27.5\% &0 &  0.0\%\\
fastest under success & 50 &  72.5\% & 19 & 27.5\% & 0 &  0.0\% \\
not slower 3 times & 69 &  100.0\% & 62 & 89.9\% & 2 & 2.9\% \\
not slower 3 times under success & 69 &  100.0\% & 62 & 89.9\% & 1 & 7.7\%  \\
\cline{1-7}
\end{tabular}
\end{table}

\subsection{ Rank-1 tensor approximations (R1TA)}
The dual form of the R1TA problem \citep{nie2014semidefinite}
is given by
\begin{equation} \label{R1TA:dual}
    \min_{x \in \mathbb{R} } \;\; x, \quad \st \;\, x g - f = M^*(X), X \in \mathbb{S}_+^n ,
\end{equation}
which is a standard SDP \citep{nie2012regularization}, where the linear map $\mathcal{A}$ depends on $M,f$ and $g$. Figure \ref{fig:R1TA-perf} presents the performance profiles of the tested algorithms, which shows that SSNCP is the fastest in nearly all problems and MOSEK can not. Particularly, for large-scale problems such as nonsym(20,4) and nonsym(21,4), where the corresponding $(m, n) = (12326390, 9261)$ and $(9260999, 8000)$, respectively, SSNCP is able to solve these problems successfully in less than 2 hours while both SDPNAL+ and MOSEK fail. As shown in Table \ref{tab:R1TA-stat}, SSNCP can solve all the tested problems while MOSEK can not due to limited memory.

\begin{figure}[h]
\centering
\subfigure[error]{
\includegraphics[width=0.45\textwidth]{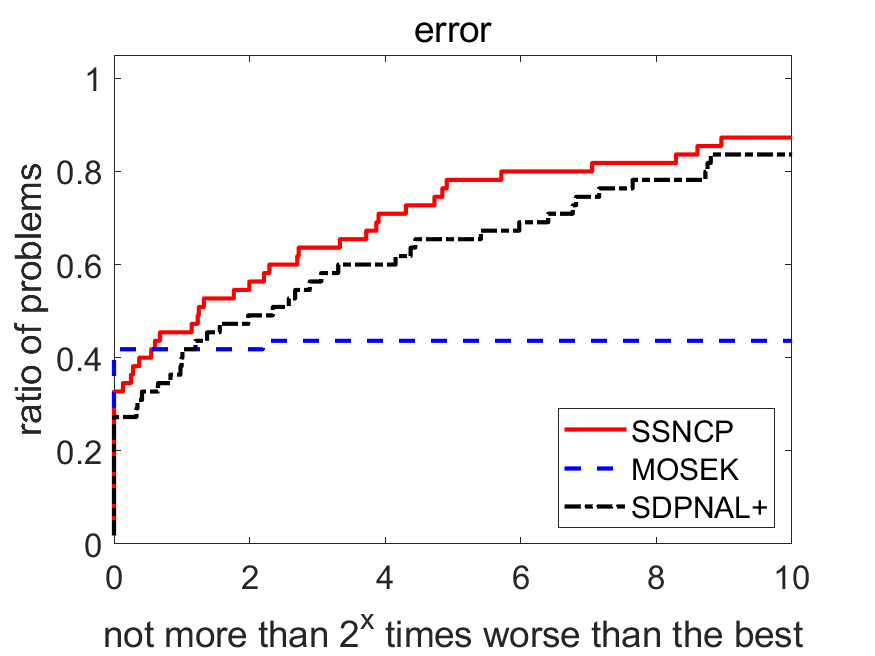}}
\subfigure[CPU]{
\includegraphics[width=0.45\textwidth]{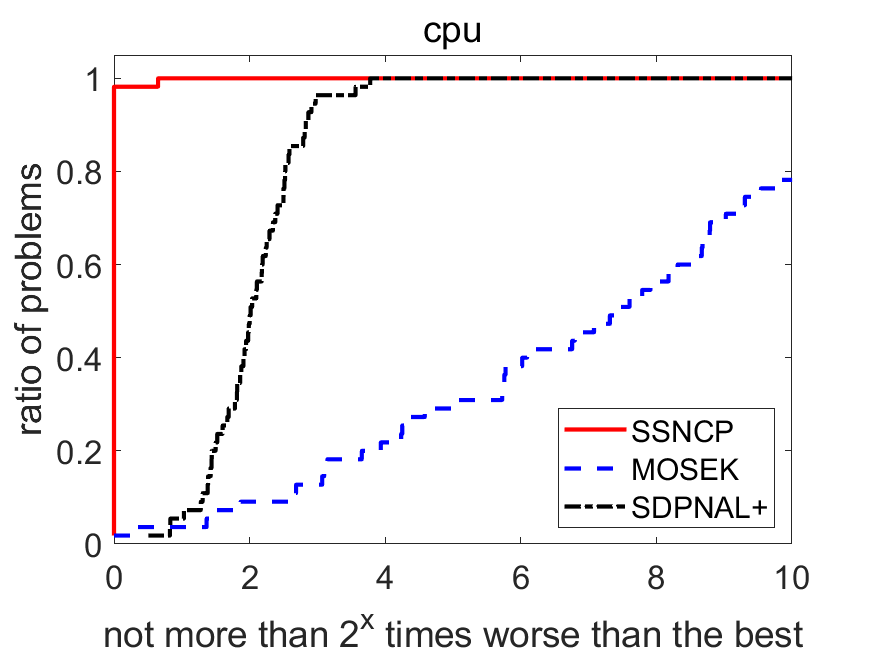}}
\caption{The performance profiles of tested algorithms for R1TA problems}\label{fig:R1TA-perf}
\end{figure}

\begin{table}[h]
\centering
\setlength{\tabcolsep}{5pt}
\caption{A statistic of computational results of tested algorithms for R1TA problems}\label{tab:R1TA-stat}
\begin{tabular}{|c|cc|cc|cc|}
\cline{1-7}
& \multicolumn{2}{c|}{SSNCP}
& \multicolumn{2}{c|}{SDPNAL+}
& \multicolumn{2}{c|}{MOSEK} \\
\cline{1-7}
success & 55 &  100.0\% & 52 & 94.5\% & 25 & 45.5\%  \\
fastest & 54 &  98.2\% & 0 & 0.0\% &1 &  1.8\%\\
fastest under success & 54 &  98.2\% & 0 & 0.0\% & 1 &  4.0\% \\
not slower 3 times & 55 &  100.0\% & 14 & 25.5\% & 3 & 5.5\% \\
not slower 3 times under success & 55 &  100.0\% & 12 & 23.1\% & 2 & 8.0\%  \\

\cline{1-7}
\end{tabular}
\end{table}

\subsection{Reduced density matrix (RDM) formulation problem}
The RDM formulation for the electronic structure calculation problem is formulated as the constrained minimization of the total energy of the molecular system
 subject to the N-representability condition as an SDP \citep{li2018semismooth}:
 \begin{equation} \label{RDM:dual}
 \begin{aligned}
     \min_{y,S_j}\quad  &b^{\mathrm{T}}y,\\
     \st \quad S_j &= \mathcal{A}^*_jy - C_j, j=1,\cdots,l,
     B^{\mathrm{T}}y = c,\\
     0 &\preceq S_1 \preceq I,~~   0 \preceq S_j ,~~ j =2,\cdots,l.
     \end{aligned}
     \end{equation}
We compared SSNCP, SDPNAL+, MOSEK, and SSNSDP \citep{li2018semismooth} on the RDM dataset.
The test results are shown in Figure \ref{fig:rdm-perf}. According to Table \ref{tab:rdm-stat}, SSNCP returns solutions with accuracy under $10^{-6}$ in all cases. Furthermore, SSNCP is faster than SDPNAL+, MOSEK, and SSNSDP on $55.8\%$ of the 276 problems.

\begin{figure}[h]
\centering
\subfigure[error]{
\includegraphics[width=0.45\textwidth]{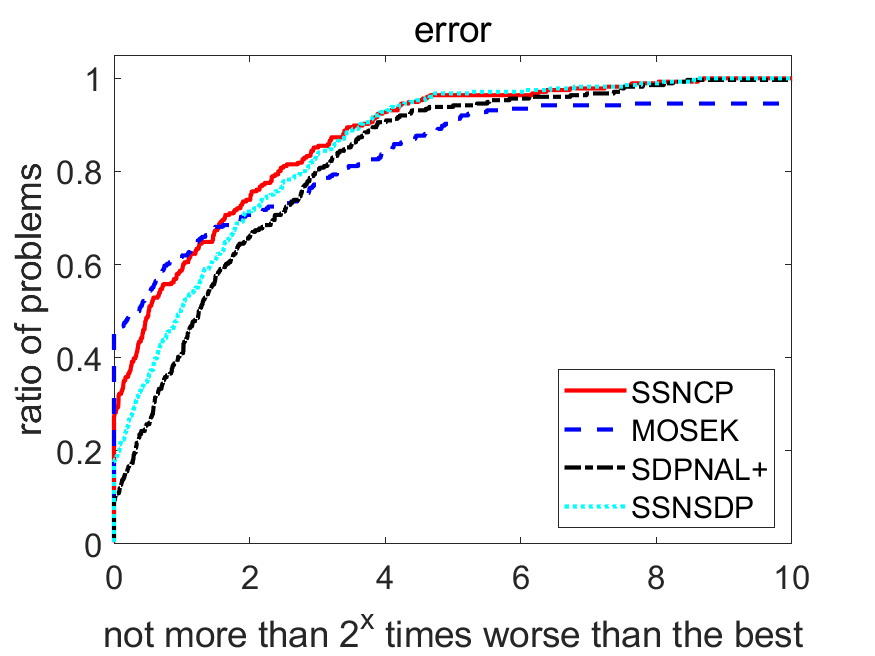}}
\subfigure[CPU]{
\includegraphics[width=0.45\textwidth]{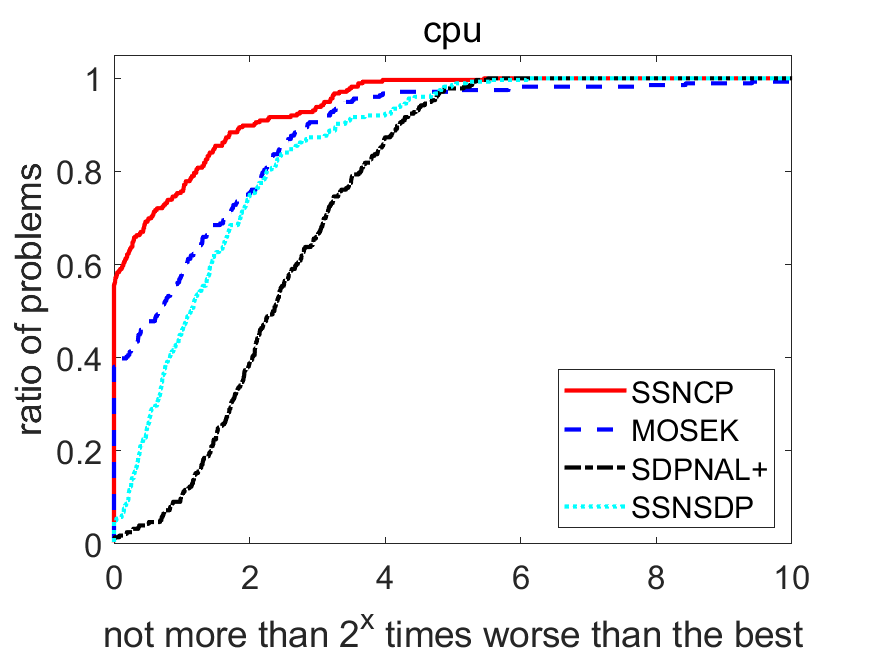}}
\caption{The performance profiles of tested algorithms for RDM problems}\label{fig:rdm-perf}
\end{figure}

\begin{table}[h]
\centering
\setlength{\tabcolsep}{5pt}
\caption{A statistic of computational results of tested algorithms for RDM problems}\label{tab:rdm-stat}
\begin{tabular}{|c|cc|cc|cc|cc|}
\cline{1-9}
& \multicolumn{2}{c|}{SSNCP}
& \multicolumn{2}{c|}{SDPNAL+}
& \multicolumn{2}{c|}{SSNSDP}
& \multicolumn{2}{c|}{MOSEK}
\\
\cline{1-9}
success & 276 &  100.0\% & 239 & 86.6\% & 276 & 100.0\% & 243 & 88.0\%  \\
fastest & 154 &  55.8\% & 3 & 1.1\% &14 &  5.1\% &105 &  38.0\% \\
fastest under success & 154 &  55.8\% & 5 & 2.1\% & 18 &  6.5\%  & 103 &  42.4\% \\
not slower 3 times & 273 &  98.9\% & 103 & 37.3\% & 225 & 81.5\%  & 190 & 68.8\%  \\
not slower 3 times under success & 271 &  98.2\% & 98 & 41.0\% & 225 & 81.5\% & 180 & 74.1\% \\
\cline{1-9}
\end{tabular}
\end{table}

\subsection{Binary integer nonconvex
quadratic programming}
Consider the SDP+ problem coming from the relaxation of a binary integer nonconvex
quadratic (BIQ) programming.
The relaxed problem has the following form \citep{burer2009copositive}:
\begin{equation} \label{pro:biq}
\begin{aligned}
    \min_{X}\; \quad & \frac{1}{2} \iprod{Q}{X_0} + \iprod{c}{x}, \\
    \st \quad & \text{diag}(X_0) - x =0, \alpha = 1, X = \left[ \begin{array}{cc}
	X_0 & x \\
	x^{\mathrm{T}} & \alpha \\
\end{array} \right] \in \mathbb{S}_+^n,\quad X \in \mathcal{P},
\end{aligned}
\end{equation}
where the polyhedral cone $\mathcal{P} = \{X \in \mathbb{S}^n \,|\, X \ge 0 \}$. In our numerical experiments, the 135 test data for $Q$ and $c$ are taken from the Biq Mac Library\footnote{\url{http://biqmac.uni-klu.ac.at/biqmaclib.html.}}. The numerical results are listed in  Figure \ref{fig:biq-perf} which shows that SSNCP and SDPNAL+ can both solve all the problems. According to the recorded outputs of all algorithms, SSNCP is faster than SDPNAL+ and MOSEK on $80.0\%$ of the 135 problems.

\begin{figure}[h]
\centering
\subfigure[error]{
\includegraphics[width=0.45\textwidth]{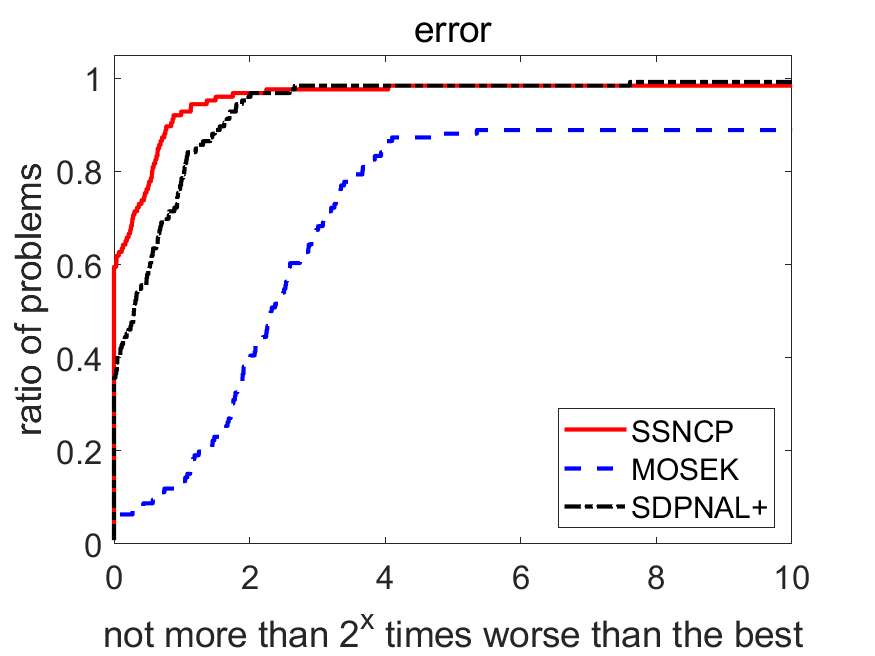}}
\subfigure[CPU]{
\includegraphics[width=0.45\textwidth]{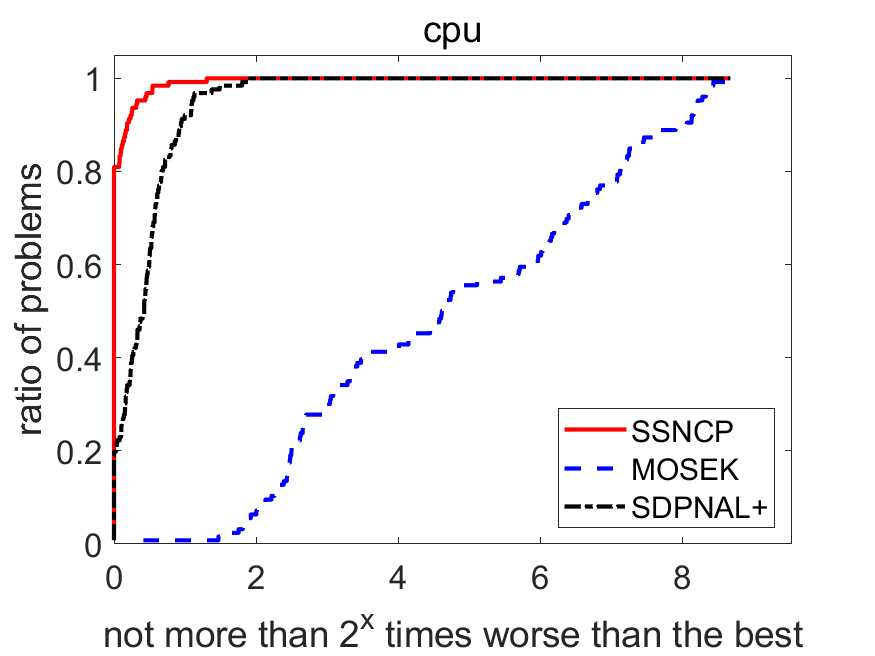}}
\caption{The performance profiles of tested algorithms for BIQ problems}\label{fig:biq-perf}
\end{figure}

\begin{table}[h]
\centering
\setlength{\tabcolsep}{5pt}
\caption{A statistic of computational results of tested algorithms for BIQ problems}\label{tab:biq}
\begin{tabular}{|c|cc|cc|cc|}
\cline{1-7}
& \multicolumn{2}{c|}{SSNCP}
& \multicolumn{2}{c|}{SDPNAL+}
& \multicolumn{2}{c|}{MOSEK}
\\
\cline{1-7}
success & 135 &  100.0\% & 133 & 98.5\% & 73 & 54.1\%  \\
fastest & 108 &  80.0\% & 27 & 20.0\% &0 &  0.0\%\\
fastest under success & 108 &  80.0\% & 27 & 20.3\% & 0 &  0.0\% \\
not slower 3 times & 133 &  98.5\% & 133 & 98.5\% & 3 & 2.2\% \\
not slower 3 times under success & 133 &  98.5\% & 131 & 98.5\% & 1 & 1.4\%  \\
\cline{1-7}
\end{tabular}
\end{table}

\subsection{Relaxation of clustering problems}
The SDP$+$ relaxation of clustering problems (RCP) described in \citep{peng2007approximating} can be represented as

\begin{equation}
\min\; \iprod{-W}{X}, \text{ s.t. }\,Xe=e,\mathrm{tr}(X)=K, X\geq 0, X\succeq 0.
\end{equation}
where $W$ is the affinity matrix whose entries represent the similarities of the objects in the dataset, $e$ is the vector of ones, and $K$ is the number of clusters.  All the datasets we tested are selected from the UCI Machine Learning Repository\footnote{\url{ http://archive.ics.uci.edu/ml/datasets.html}}, including ``abalone'', ``segment'', ``soybean'' and ``spambase''.
The numerical results comparing with SDPNAL+ and MOSEK are shown in Figure \ref{fig:rcp-perf} and Table \ref{tab:biq}, these figures again show that the accuracy and the CPU time
of SSNCP are better than SDPNAL+ on most problems. According to Table \ref{tab:biq}, SSNCP is faster than SDPNAL+ and MOSEK on $76.7\%$ of the 120 problems.

\begin{figure}[h]
\centering
\subfigure[error]{
\includegraphics[width=0.45\textwidth]{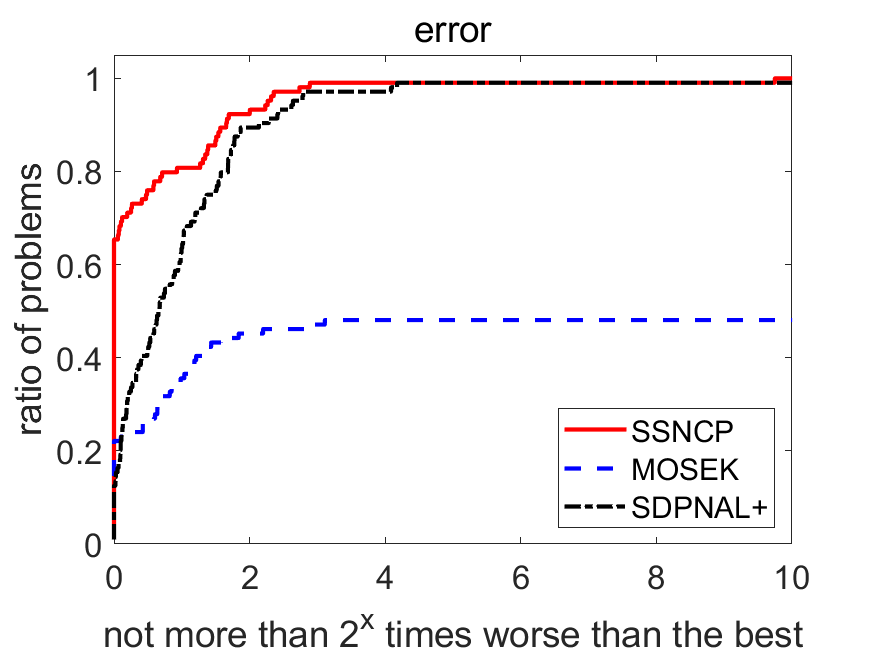}}
\subfigure[CPU]{
\includegraphics[width=0.45\textwidth]{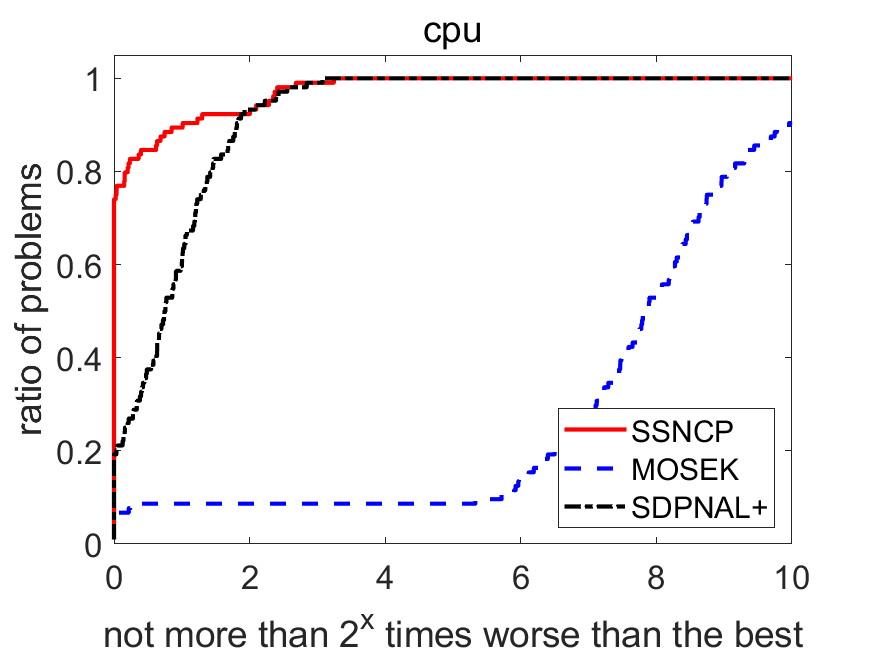}}
\caption{The performance profiles of tested algorithms for RCP problems}\label{fig:rcp-perf}
\end{figure}

\begin{table}[h]
\centering
\setlength{\tabcolsep}{5pt}
\caption{A statistic of computational results of tested algorithms for RCP problems}\label{tab:rcp}
\begin{tabular}{|c|cc|cc|cc|}
\cline{1-7}
& \multicolumn{2}{c|}{SSNCP}
& \multicolumn{2}{c|}{SDPNAL+}
& \multicolumn{2}{c|}{MOSEK}
\\
\cline{1-7}
success & 120 &  100.0\% & 120 & 100.0\% & 36 & 30.0\%  \\
fastest & 92 &  76.7\% & 20 & 16.7\% &8 &  6.7\%\\
fastest under success & 92 &  76.7\% & 20 & 16.7\% & 3 &  8.3\% \\
not slower 3 times & 115 &  95.8\% & 100 & 83.3\% & 10 & 8.3\% \\
not slower 3 times under success & 115 &  95.8\% & 103 & 85.8\% & 5 & 13.9\%  \\
\cline{1-7}
\end{tabular}
\end{table}

\subsection{Mittelmann benchmark}
The Mittelmann benchmark\footnote{\url{https://plato.asu.edu/ftp/sparse_sdp.html} \label{fn:sametext} } is widely recognized as the most authoritative benchmark for SDP, where numerous commercial solvers are tested. In our experiments, all solvers run on four CPUs with 16GB in total. We stop SSNCP, SDPNAL+ and MOSEK when $\eta_1 < 10^{-6}$.  For problems that are difficult to achieve high precision or when stagnation occurs, we stop the SSNCP at a medium accuracy. For consistency, different from the setting of SDP stated in Section \ref{parameter setting}, we use the same criteria as Mittelman's benchmark. To be specific, the ``success-1e-2'' means the number of solutions that satisfy $\min\{ \eta_1,\eta_g\} < 10^{-2}$ while ``success-1e-4'' denotes the number of solutions that satisfy $ \eta_1 < 10^{-4}$. We note that the criterion ``success-1e-2" is also used in the test of the Mittelmann benchmark\textsuperscript{\ref{fn:sametext}} to decide whether a problem is solved.
The maximum iteration time is set as $2 \times 10^4$ seconds and the maximum number of iterations is $10^5$. In order to better demonstrate the efficiency of our algorithm, different parameters are chosen for challenging classes of problems for SSNCP.

The results of the numerical experiments are listed in Figure \ref{fig:mittelmann-perf} and Table \ref{tab:mittelmann}. The geomean over all instances is calculated by
$
    \text{geomean} = \left( \Pi_{i=1}^n (t_i + \zeta_0) \right)^{\frac{1}{n}} - \zeta_0,
$
where $n$ denotes the number of instances, $t_i$ denotes the time consumed by the
$i$-th instance and $\zeta_0 > 0$ is a small value to avoid the case that $t_i$ is small. We set
$\zeta_0$ = 10 seconds in the computation which is the same as the Mittlemann benchmark.
These figures show that SSNCP is competitive with MOSEK and SDPNAL+ on the challenging Mittelmann benchmark on CPU time and accuracy. Furthermore, it is shown in Table \ref{tab:mittelmann} that on all the 75 tested problems, the geometric mean time of SSNCP is significantly lower than that of SDPNAL+, especially MOSEK. The results demonstrate that SSNCP has the potential to solve challenging large-scale SDP. Specifically, for challenging problems arising from Theta dataset such as the \texttt{1et.2048} and \texttt{1tc.2048} problems, SSNCP can solve them in less than 40 minutes, which is twice faster than SDPNAL+ while MOSEK cannot due to limited memory.  For problems arising from minimum bisection or max-cut problems such as \texttt{G60mb} and   \texttt{G60mc}, SSNCP is able to solve them in less than 1 hour while MOSEK needs more than 3 hours to solve them.  In addition, for problems with large $m$ such as \texttt{hamming956} and \texttt{theta123}, MOSEK fails to solve them due to the lack of memory while SSNCP can solve them in less than 5 seconds.

\begin{table}[H]
    \centering
    \setlength{\tabcolsep}{1.3pt}
    \caption{Selected computational results of tested algorithms on Mittlemann benchmark.}
    \label{tab:merged}
    \scalebox{0.7}{\begin{tabular}{|c|c|c|cccc|cccc|cccc|}
        \cline{1-15}
        \multirow{2}{*}{id} %
        &\multirow{2}{*}{m}
        &\multirow{2}{*}{n}
        & \multicolumn{4}{|c|}{SSNCP}
        & \multicolumn{4}{c|}{SDPNAL+}
        & \multicolumn{4}{c|}{MOSEK}\\
        \cline{4-15}
        &  &  & $\eta_p$ & $\eta_d$  & iter & time
        & $\eta_p$ & $\eta_d$ & iter & time
        & $\eta_p$ & $\eta_d$ & iter & time \\ \cline{1-15}
G40mb& 2000 & 2001 & 3.6e-07 & 2.0e-08 & 117 & \textbf{94.2} & 7.6e-07 & 8.4e-07 & 12319 & 9954.6 & 2.7e-06 & 1.7e-07 & 24 & 288.9 \\ \cline{1-15}
G48mb& 3000 & 3001 & 1.3e-10 & 2.5e-08 & 125 & \textbf{227.2} & 1.6e-07 & 6.8e-07 & 1511 & 2207.0 & 2.7e-13 & 1.8e-14 & 8 & 304.2 \\ \cline{1-15}
G48mc& 3000 & 3000 & 5.8e-08 & 9.2e-07 & 96 & \textbf{170.1} & 4.4e-07 & 9.3e-07 & 1942 & 2667.5 & 2.5e-14 & 2.4e-09 & 7 & 175.8 \\ \cline{1-15}
G55mc& 5000 & 5000 & 1.2e-08 & 1.0e-07 & 86 & \textbf{1158.9} & 8.2e-07 & 8.2e-07 & 1098 & 9769.8 & 5.6e-13 & 1.4e-07 & 13 & 1474.9 \\ \cline{1-15}
G59mc& 5000 & 5000 & 3.5e-07 & 1.3e-07 & 84 & \textbf{1153.8} & 1.7e-14 & 6.5e-07 & 1018 & 10003.4 & 6.4e-13 & 6.9e-07 & 12 & 1402.3 \\ \cline{1-15}
G60mb& 7000 & 7001 & 4.5e-07 & 1.0e-07 & 46 & \textbf{3563.8} & 3.4e-03 & 2.1e-03 & 136 & 10020.4 & 5.8e-06 & 4.8e-08 & 25 & 14019.8 \\ \cline{1-15}
G60mc& 7000 & 7001 & 4.5e-07 & 1.0e-07 & 45 & \textbf{3561.7} & 5.8e-03 & 9.6e-03 & 134 & 10082.5 & 5.8e-06 & 4.8e-08 & 25 & 13559.0 \\ \cline{1-15}
1dc.1024& 1024 & 24064 & 7.2e-07 & 5.5e-07 & 38 & \textbf{57.1} & 9.2e-07 & 7.5e-07 & 113 & 160.7 & 8.4e-09 & 2.5e-09 & 12 & 421.5 \\ \cline{1-15}
1et.2048& 2048 & 22529 & 8.4e-07 & 3.7e-07 & 77 & \textbf{1069.1} & 8.3e-07 & 3.7e-07 & 841 & 2211.4 & NaN & NaN & NaN & NaN \\ \cline{1-15}
1tc.2048& 2048 & 18945 & 5.1e-07 & 7.9e-07 & 123 & \textbf{2250.9} & 9.9e-07 & 2.3e-07 & 1331 & 4471.6 & NaN & NaN & NaN & NaN \\ \cline{1-15}
1zc.1024& 1024 & 16641 & 6.4e-07 & 1.4e-07 & 19 & \textbf{5.8} & 2.8e-07 & 5.8e-07 & 107 & 38.6 & 5.2e-10 & 5.0e-10 & 9 & 148.2 \\ \cline{1-15}
AlH1-& 5990 & 7230 & 6.1e-07 & 5.0e-07 & 58 & \textbf{116.5} & 7.9e-07 & 1.0e-06 & 1610 & 1907.8 & 7.3e-09 & 5.1e-09 & 24 & 1402.2 \\ \cline{1-15}
BH22& 2166 & 1743 & 3.3e-07 & 9.9e-07 & 96 & \textbf{30.5} & 3.4e-07 & 9.8e-07 & 3423 & 230.0 & 5.5e-09 & 3.4e-09 & 28 & 42.8 \\ \cline{1-15}
Bex215& 2112 & 3002 & 2.5e-08 & 3.2e-08 & 29 & \textbf{11.0} & 7.9e-07 & 6.9e-07 & 228 & 64.7 & 3.1e-09 & 1.1e-10 & 11 & 11.7 \\ \cline{1-15}
Bstjcbpaf2& 2244 & 3002 & 1.5e-06 & 1.8e-07 & 26 & \textbf{14.1} & 6.2e-09 & 2.6e-07 & 215 & 41.3 & 4.1e-08 & 1.5e-09 & 15 & 18.6 \\ \cline{1-15}
CH21A1& 2166 & 1743 & 1.3e-07 & 7.5e-07 & 98 & \textbf{34.7} & 5.3e-07 & 9.9e-07 & 2417 & 188.9 & 8.0e-09 & 5.0e-09 & 25 & 44.8 \\ \cline{1-15}
H3O+1-& 3162 & 2964 & 4.3e-07 & 8.8e-07 & 63 & \textbf{105.3} & 9.1e-07 & 1.0e-06 & 3206 & 551.1 & 6.7e-09 & 4.1e-09 & 23 & 395.7 \\ \cline{1-15}
NH2-.& 2044 & 1743 & 4.4e-07 & 8.7e-07 & 44 & \textbf{19.8} & 8.3e-07 & 8.4e-07 & 1104 & 101.5 & 4.9e-10 & 2.0e-06 & 38 & 55.7 \\ \cline{1-15}
NH31-& 3162 & 2964 & 5.3e-08 & 9.8e-07 & 73 & \textbf{66.2} & 3.4e-07 & 1.0e-06 & 2900 & 436.1 & 5.3e-09 & 3.1e-09 & 23 & 212.4 \\ \cline{1-15}
NH4+.& 4236 & 4743 & 4.9e-08 & 6.7e-07 & 90 & \textbf{286.1} & 3.3e-09 & 1.0e-06 & 2767 & 831.9 & 5.3e-09 & 1.8e-06 & 46 & 818.8 \\ \cline{1-15}
biomedP& 6514 & 6515 & 8.8e-07 & 8.6e-07 & 131 & \textbf{5753.2} & 1.2e-04 & 7.0e-04 & 286 & 10009.6 & 5.2e-04 & 6.7e-08 & 23 & 12682.2 \\ \cline{1-15}
fap09& 174 & 30276 & 1.0e-04 & 9.4e-07 & 94 & \textbf{5.8} & 2.8e-07 & 9.0e-07 & 1010 & 8.3 & 2.6e-09 & 1.7e-03 & 23 & 1489.0 \\ \cline{1-15}
hamming834& 256 & 16129 & 1.2e-07 & 7.3e-18 & 8 & \textbf{0.1} & 5.9e-08 & 5.8e-07 & 23 & 1.2 & 1.4e-11 & 2.3e-13 & 5 & 55.2 \\ \cline{1-15}
hamming956& 513 & 53761 & 1.9e-08 & 3.5e-08 & 10 & \textbf{0.5} & 6.9e-07 & 9.2e-07 & 102 & 5.3 & 0.0e+00 & NaN & NaN & NaN \\ \cline{1-15}
\end{tabular}}
\end{table}

\begin{table}[h]
\centering
\setlength{\tabcolsep}{5pt}
\caption{A statistic of computational results of tested algorithms for Mittelmann benchmark.}\label{tab:mittelmann}
\begin{tabular}{|c|cc|cc|cc|}
\cline{1-7}
& \multicolumn{2}{c|}{SSNCP}
& \multicolumn{2}{c|}{SDPNAL+}
& \multicolumn{2}{c|}{MOSEK}
\\
\cline{1-7}
success-1e-2 & 68 &  90.7\% & 50 & 66.7\% & 67 & 88.3\%  \\
success-1e-4 & 64 &  85.3\% & 48 & 64.0\% & 62 & 82.7\%  \\
fastest & 45 & 60.0\% & 2 & 2.7\% &28 &  37.3\%\\
shifted geomean time & \multicolumn{2}{c|}{\textbf{226.9}} & \multicolumn{2}{c|}{800.2} & \multicolumn{2}{c|}{336.9}  \\
\cline{1-7}
\end{tabular}
\end{table}

\begin{figure}[h]
\centering
\subfigure[error]{
\includegraphics[width=0.45\textwidth]{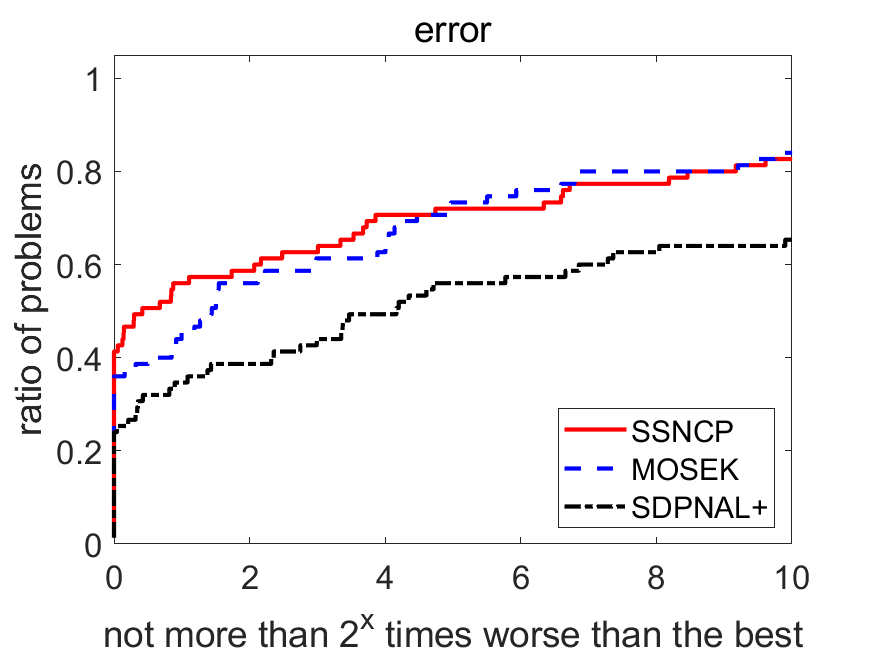}}
\subfigure[CPU]{
\includegraphics[width=0.45\textwidth]{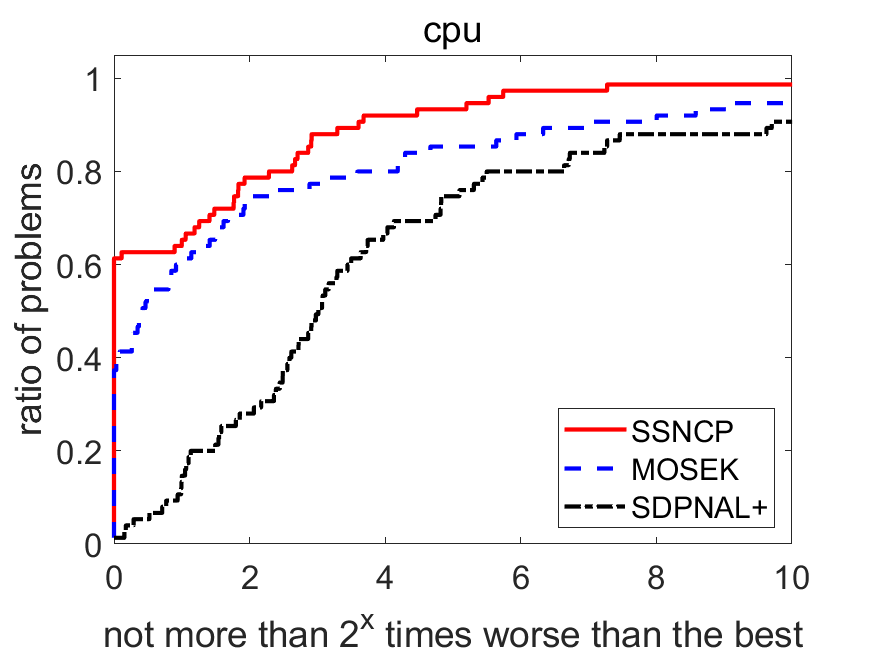}}
\caption{The performance profiles of tested algorithms on Mittelmann benchmark}\label{fig:mittelmann-perf}
\end{figure}

\section{Conclusion} \label{sec6}

In this paper, we introduce a novel semismooth Newton method, SSNCP,  to solve SDP. Our method derives from a semismooth system based on the saddle point problem utilizing the augmented Lagrangian duality. To overcome difficulties associated with nonsmoothness, we incorporate a correction step to ensure that the iterates reside on a manifold where the nonlinear mapping exhibits smoothness. Global convergence is achieved by incorporating carefully designed inexact criteria and by exploiting the $\alpha$-averaged property of the semismooth mapping. Moreover, SSNCP achieves a superlinear convergence rate under the local error bound condition without relying on the stringent nonsingularity or strict complementarity conditions. Additionally, we demonstrate that SSNCP converges to an $\varepsilon$-stationary point with an iteration complexity of $\widetilde{\mathcal{O}}(\varepsilon^{-3/2})$. Numerical experiments on a variety of datasets, including the Mittelmann benchmark, highlight the efficiency and effectiveness of SSNCP compared to state-of-the-art solvers.

\vskip 0.2in
\bibliography{ref}

\end{document}